\documentclass[11pt,a4paper]{article}

\usepackage[utf8]{inputenc}\usepackage[english]{babel}

\usepackage{amsmath}
\usepackage{amsfonts}
\usepackage{amssymb}

\usepackage{graphicx}
\usepackage[left=2cm,right=2cm,top=2cm,bottom=2cm]{geometry}
\usepackage{authblk}  
\usepackage{xspace}  
\usepackage{hyperref}  
\usepackage[all,cmtip]{xy}

\usepackage{mathtools} 

\usepackage[style=numeric,maxbibnames=99, backend=biber]{biblatex}
\renewbibmacro{in:}{}
\usepackage{csquotes}
\DeclareFieldFormat*{title}{\mkbibemph{#1}}
\DeclareFieldFormat[book]{date}{\mkbibparens{#1}}
\DeclareFieldFormat[thesis, phdthesis, inproceedings]{date}{\mkbibparens{#1}} 
\DeclareFieldFormat{journaltitle}{#1}

\addbibresource{Bibliography.bib}

\usepackage{tikz}
\usetikzlibrary{positioning,fit}

\usepackage{tikz-cd} 
\newcommand\mlnode[1]{\fbox{{\renewcommand{\arraystretch}{1.1}\begin{tabular}{@{}c@{}}#1\end{tabular}}}}

\usepackage{fancyhdr} 
\pagestyle{fancy}
\fancyhf{}
\fancyheadoffset{0cm}

\fancyhead[R]{\thepage}
\fancypagestyle{plain}{%
   \fancyhf{}%
   \fancyhead[lef,rof]{\thepage}%
}

\usepackage{xcolor}
\usepackage{float}

\title{Classification of classical twists of the standard Lie bialgebra structure on a loop algebra}
\date{\today}
\author[1]{Raschid Abedin}
\author[2]{Stepan Maximov}
\affil[1]{Institut für Mathematik,
Universität Paderborn, 33098 Paderborn,
Germany}
\affil[2]{Department of Mathematical Sciences, 
Chalmers University of Technology  and \protect\\ the University of Gothenburg, 412 96 Gothenburg, Sweden}



\newcommand{\ZZ}{\mathbb{Z}}

\newcommand{\RR}{\mathbb{R}}
\newcommand{\CC}{\mathbb{C}}

\newcommand{\g}{\mathfrak{g}}
\newcommand{\n}{\mathfrak{n}}
\newcommand{\h}{\mathfrak{h}}
\newcommand{\p}{\mathfrak{p}}

\newcommand{\B}{\mathfrak{B}}
\newcommand{\N}{\mathfrak{N}}
\newcommand{\Loop}{\mathfrak{L}}

\newcommand{\K}{\kappa}

\newcommand{\add}{\dotplus}
\newcommand{\ot}{\otimes}

\newcommand{\set}[1]{\left\{ #1 \right\}}
\newcommand{\seq}[1]{\left( #1 \right)}

\newcommand{\dual}[1]{#1^{\vee}}
\DeclareMathOperator{\adOp}{ad}
\newcommand{\ad}{\mathord{\adOp}}
\DeclareMathOperator{\imOp}{im}
\newcommand{\im}{\mathord{\imOp}}
\DeclareMathOperator{\diagOp}{diag}
\newcommand{\diag}{\mathord{\diagOp}}
\DeclareMathOperator{\ordOp}{ord}
\newcommand{\ord}{\mathord{\ordOp}}
\DeclareMathOperator{\hgtOp}{ht}
\newcommand{\hgt}{\mathord{\hgtOp}}
\DeclareMathOperator{\idOp}{id}
\newcommand{\id}{\mathord{\idOp}}
\DeclareMathOperator{\spanOp}{span}
\newcommand{\Span}{\mathord{\spanOp}}


\newcommand{\rmatr}{\ensuremath{r}\text{-matrix}\xspace}
\newcommand{\rmatrs}{\ensuremath{r}\text{-matrices}\xspace}

\newcommand\restr[2]{{%
  \left.\kern-\nulldelimiterspace%
  #1%
  \vphantom{\big|}%
  \right|_{#2}%
  }}

\usepackage{amsthm}

\numberwithin{equation}{section}
\renewcommand*{\theequation}{%
  \ifnum\value{section}>0 %
    \thesection.%
  \fi
    \arabic{equation}%
}

\makeatletter
\def\th@plain{%
  \thm@notefont{}
  \itshape 
}
\def\th@definition{%
  \thm@notefont{}
  \normalfont 
}
\makeatother

\newtheorem{theoremcounter}{theoremcounter}[section]
\newtheorem{maintheoremcounter}{maintheoremcounter}

\theoremstyle{plain}

\newtheorem{lemma}[theoremcounter]{Lemma}

\newtheorem{theorem}[theoremcounter]{Theorem}

\newtheorem{maintheorem}[maintheoremcounter]{Theorem}

\newtheoremstyle{example_style}
  {\topsep} 
  {\topsep} 
  {} 
  {} 
  {\itshape} 
  {.} 
  {.5em} 
  {} 

\theoremstyle{example_style}

\theoremstyle{definition}

\theoremstyle{example_style}

\newenvironment{remark}
  {\pushQED{\qed}\remarkx}
 {\popQED\endremarkx}

%

\begin{document}
\clearpage
\maketitle
\thispagestyle{empty}

\begin{abstract}
The standard Lie bialgebra structure on an affine Kac-Moody algebra induces a Lie bialgebra structure on the underlying loop algebra and its parabolic subalgebras. 
In this paper we classify all classical twists of the induced Lie bialgebra structures in terms of Belavin-Drinfeld quadruples up to a natural notion of equivalence.
To obtain this classification we first show that the induced bialgebra structures are defined by certain solutions of the classical Yang-Baxter equation (CYBE) with two parameters. Then, using the algebro-geometric theory of CYBE, based on torsion free coherent sheaves, we reduce the problem to the well-known classification of trigonometric solutions given by Belavin and Drinfeld.
The classification of twists in the case of parabolic subalgebras allows us to answer recently posed open questions regarding the so-called quasi-trigonometric solutions of CYBE.

\end{abstract}

\section{Introduction}
A Lie bialgebra is a pair \( (L, \delta) \)
consisting of a Lie algebra \( L \) and a linear map
\( \delta \colon L \longrightarrow L \ot L \), called Lie cobracket, inducing a compatible Lie
algebra structure on the dual space \( \dual{L} \).
This notion originated in
\cite{drinfeld_hamiltonian_structures}
as the infinitesimal counterpart of a Poisson Lie group. Shortly after, in \cite{drinfeld_hopf_algebras,drinfeld_quantum_groups},
Lie bialgebras were described as quasi-classical limits of certain quantum groups and received a fundamental role in the quantum group theory.

Having a Lie bialgebra structure 
\( \delta \) on a Lie algebra \( L \)
we can obtain new Lie bialgebra structures
using a procedure called twisting. More precisely, let
\(t \) be a skew-symmetric tensor in \( L \ot L\) satisfying 
\begin{equation*}\label{eq: twisting condition on t}
  \textnormal{CYB}(t)
  = \textnormal{Alt}((\delta \otimes 1)t),
\end{equation*}
where
\begin{equation*}\label{eq: CYB and Alt}
\begin{split}
    \textnormal{CYB}(t)
  &\coloneqq
  [t^{12}, t^{13}] +
  [t^{12}, t^{23}] +
  [t^{13}, t^{23}], \\
  \textnormal{Alt}(x_1 \ot x_2 \ot x_3)
  &\coloneqq x_1 \ot x_2 \ot x_3 + 
    x_2 \ot x_3 \ot x_1 + 
    x_3 \ot x_1 \ot x_2
\end{split}
\end{equation*}
and, for example, \([(a\ot b)^{12},(c \ot d)^{23}] \coloneqq a \ot [b,c] \ot d\).
Then the linear map
\( \delta_t \coloneqq \delta + dt \)
is a Lie bialgebra structure on \(L\). Such a tensor \( t \) is called a classical twist.

The most important example of a Lie bialgebra structure is the standard structure \(\delta\) on
a symmetrizable Kac-Moody algebra \(\mathfrak{K} \coloneqq \mathfrak{K}(A)\) introduced in \cite{drinfeld_hopf_algebras}. In the case when the Cartan matrix \(A\) is of finite type or, equivalently, when
\( \mathfrak{K} \) is a finite-dimensional semi-simple Lie algebra, the standard structure \( \delta \) and all its twisted versions 
\( \delta_t \) are known to be quasi-triangular,
i.e. they are of the form \( dr \) for some
\( r \in \mathfrak{K}\ot \mathfrak{K}\)
satisfying the classical Yang-Baxter equation \(
\textnormal{CYB}(r) = 0\).

When the matrix \( A \) is of affine type,
the standard structure on \( \mathfrak{K} \)
induces a Lie bialgebra structure on \(  [\mathfrak{K}, \mathfrak{K}]/Z(\mathfrak{K}) \), where \(Z(\mathfrak{K})\) is the center of \(\mathfrak{K}\), which we will also call standard. The latter Lie algebra is known (see \cite{kac_book}) to be isomorphic to the loop algebra \( \Loop^\sigma \) over a simple finite-dimensional Lie algebra \( \g \) corresponding to an automorphism
\( \sigma \in \nolinebreak \textnormal{Aut}_{\CC - \textnormal{LieAlg}}(\g) \) of finite order \( m \). It has the following explicit description
\begin{align*}
  \Loop^\sigma = \set{f \in \g[z,z^{-1}] \mid f(\varepsilon_\sigma z) = \sigma(f(z))},
  \qquad
  \varepsilon_\sigma \coloneqq \exp(2\pi i / m).
\end{align*}
We denote the induced standard Lie bialgebra structure on
\( \Loop^\sigma \) by \( \delta^\sigma_0 \) and
call its twists \( \delta_t^\sigma \coloneqq \delta_0^\sigma + dt \)
twisted standard structures. These Lie bialgebra structures are not quasi-triangular, but pseudoquasitriangular as is shown in
Theorem \ref{thm: delta = [, r]}, i.e. they are defined by
meromorphic functions \( r \colon \CC^2 \longrightarrow \g \ot \g \), also known as \rmatrs,
satisfying
the two-parametric classical Yang-Baxter equation (CYBE)
\begin{equation*}
    \textnormal{CYB}(r)(x_1,x_2,x_3) \coloneqq [r^{12}(x_1, x_2), r^{13}(x_1, x_3)] +
    [r^{12}(x_1, x_2), r^{23}(x_2, x_3)] +
    [r^{13}(x_1, x_3), r^{23}(x_2, x_3)] = 0.
\end{equation*}
For example,
the trigonometric \rmatr
given by a Belavin-Drinfeld (BD) quadruple \( Q \), corresponding to an outer automorphism \(\nu \in \textnormal{Aut}_{\CC - \textnormal{LieAlg}}(\g)\) (see \cite{belavin_drinfeld_solutions_of_CYBE_paper}),
gives rise to a twisted standard structure \(\delta^\sigma_Q\) on \( \Loop^\sigma \) for any finite order automorophism \( \sigma \) whose coset is conjugate to \(\nu \textnormal{Inn}_{\CC - \textnormal{LieAlg}}(\g)\). 
It turns out that any \rmatr defining a twisted standard bialgebra structure on \( \Loop^\sigma \) is globally holomorphically equivalent to a trigonometric solution in the sense of
the Belavin-Drinfeld classification (see Theorem \ref{thm: r(x,y)=X(u-v)}). We refer to such \rmatrs as
\( \sigma \)-trigonometric.

We call two twisted standard structures \( \delta^\sigma_t \) and \( \delta^\sigma_s \) (regularly) equivalent
if there is a function
\begin{equation*}
    \phi \in \textnormal{Aut}_{\CC[z^m,z^{-m}] - \textnormal{LieAlg}} (\Loop^\sigma)
    =
    \set{f \colon \CC^* \longrightarrow \textnormal{Aut}_{\CC - \textnormal{LieAlg}}(\g) \mid f \text{ is regular and } f(\varepsilon_\sigma z) = \sigma f(z)\sigma^{-1}},
\end{equation*}
called a regular equivalence, such that
\( \delta^\sigma_t \phi = (\phi \ot \phi) \delta^\sigma_s \).
The main result of this paper is the classification of twisted standard structures up to regular equivalence.
The classification is obtained by reducing our problem to the classification of trigonometric \rmatrs up to holomorphic equivalence given in 
\cite{belavin_drinfeld_solutions_of_CYBE_paper}.
To deal with the difference between the notions of equivalence we use the geometric formalism of CYBE presented in \cite{burban_galinat}.
More precisely, one of the key results in \cite{burban_galinat} is that certain coherent sheaves of Lie algebras on Weierstraß cubic curves give rise to so-called geometric \rmatrs, satisfying a geometric version of CYBE. In Section \ref{sec: Proof} we 
prove the following extension property:
%
\begin{maintheorem}\label{thm: intro theorem extending equivalence}
A formal equivalence of geometric \rmatrs at the smooth point at infinity of the Weierstraß cubic curve gives rise to an isomorphism of the corresponding sheaves of Lie algebras.
\end{maintheorem}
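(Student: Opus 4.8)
The plan is to use the dictionary of \cite{burban_galinat} between geometric \rmatrs and sheaves of Lie algebras, and to upgrade the given formal datum at infinity to a global algebraic gauge transformation. Recall that the geometric \rmatr $\rho_i$ attached to a sheaf $\mathcal{A}_i$ is a rational section of $\mathcal{A}_i \boxtimes \mathcal{A}_i$ on $E \times E$ with a simple pole along the diagonal $\Delta$ whose residue is the Casimir element $\gamma \in \g \ot \g$, that $\rho_i$ is nondegenerate away from $\Delta$ and finitely many points, and that $\mathcal{A}_i$ can be recovered from $\rho_i$. A formal equivalence at the smooth point $\infty$ is a formal gauge transformation $g$, i.e.\ a power series at $\infty$ valued in $\textnormal{Aut}_{\CC - \textnormal{LieAlg}}(\g)$ (possibly preceded by a reparametrisation of the formal disc, i.e.\ an automorphism of $E$ fixing $\infty$), such that $\rho_2 = (\operatorname{Ad}_{g} \ot \operatorname{Ad}_{g})\rho_1$ holds in the formal neighbourhood of $\infty$. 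I would first algebraize $g$, then globalize the intertwining relation, and finally read off the sheaf isomorphism.

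The decisive step is algebraization. Dualizing the second tensor factor by means of the Killing form, I view $\rho_i(u,v)$ as a map $\dual{\g} \to \g$ and rewrite the intertwining relation as $\rho_2(u,v)^{\flat} = \operatorname{Ad}_{g(u)} \circ \rho_1(u,v)^{\flat} \circ \operatorname{Ad}_{g(v)}^{\mathsf{T}}$. Since $\rho_1$ is nondegenerate off $\Delta$, its inverse is again rational on $E \times E$; fixing the second spectral variable at a suitable reference point and inverting $\rho_1$ there expresses $\operatorname{Ad}_{g(u)}$ as a rational function of $u$ precomposed with a constant operator. As $\rho_1$ and $\rho_2$ are rational on $E \times E$, this forces the formal series $\operatorname{Ad}_{g(u)}$ to agree with a rational function on $E$, so that $g$ is the expansion at $\infty$ of a rational gauge transformation $E \dashrightarrow \textnormal{Aut}_{\CC - \textnormal{LieAlg}}(\g)$.

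Once $g$ is algebraic, the relation $\rho_2 = (\operatorname{Ad}_g \ot \operatorname{Ad}_g)\rho_1$ holds on a formal neighbourhood of $\infty$ between two rational sections, and therefore holds on all of $E \times E$. The pointwise automorphism $\operatorname{Ad}_{g(u)}$ defines an $\mathcal{O}_E$-linear morphism of sheaves of Lie algebras: it is fibrewise a Lie algebra automorphism and commutes with multiplication by regular functions because it acts in the spectral parameter pointwise. Invoking the reconstruction of $\mathcal{A}_i$ from $\rho_i$, the global intertwining shows that $\operatorname{Ad}_g$ carries $\mathcal{A}_1$ onto $\mathcal{A}_2$ inside $\g \ot \CC(E)$, with inverse $\operatorname{Ad}_{g^{-1}}$; near $\infty$ it restricts to the prescribed formal equivalence by uniqueness of the expansion. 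This yields the desired isomorphism of sheaves of Lie algebras.

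I expect the main obstacle to be the algebraization together with the control of the finitely many exceptional points. Solving for $\operatorname{Ad}_{g(u)}$ relies on the nondegeneracy and the fixed residue of the geometric \rmatr, which are exactly the rigidity properties guaranteed by \cite{burban_galinat}; these also constrain the pole divisor of the rational $g$. The remaining difficulty is to verify that $\operatorname{Ad}_g$ is regular and invertible not only generically but at \emph{every} point of $E$ --- at $\infty$, at the poles of $g$, and at the singular points of $\mathcal{A}_1$ and $\mathcal{A}_2$ --- so as to obtain an honest isomorphism of coherent sheaves rather than a merely birational one. I would settle this by matching pole orders through the intertwining relation and the residue normalization of $\rho_1$ and $\rho_2$ at these points; the possible automorphism of $E$ fixing $\infty$ is finite-dimensional data and is algebraized in the same fashion.
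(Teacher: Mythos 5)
Your high-level strategy --- recover the sheaves from their geometric \rmatrs and transport one onto the other by the gauge transformation --- is the right one, but the route you take through it leaves the essential step unproved. The paper (Theorem \ref{thm: lifting equivalences}) never algebraizes the formal gauge transformation at all. Instead it works with the filtered section algebras: writing \(\phi = \sum_j u^j\phi_j\) and comparing Taylor coefficients in the intertwining identity gives \(\g(\rho_2) \subseteq \phi(\g(\rho_1))\), and the Manin triple decomposition \(\g(\!(u)\!) = \g[\![u]\!] \dotplus \g(\rho_i)\) upgrades this to equality. Since \(\phi\) is \(\CC[\![u]\!]\)-linear it preserves the filtration of \(\g(\!(u)\!)\), hence induces a \emph{graded} isomorphism \(\textnormal{gr}(\g(\rho_1)) \to \textnormal{gr}(\g(\rho_2))\), and the reconstruction \(\mathcal{A}_i \cong \textnormal{gr}(\g(\rho_i))^\sim\) on \(E = \textnormal{Proj}(\textnormal{gr}(O))\) (Lemma \ref{lemma: rho_determines_A}) together with functoriality of \((\cdot)^\sim\) produces the sheaf isomorphism outright. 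All pointwise regularity is absorbed into the Proj construction.

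The gap in your version is precisely the step you defer to the end. First, a smaller issue: your ``suitable reference point'' for the second spectral variable does not exist as a closed point of the formal disc other than \(p\) itself, so the algebraization must be run on the \(y^0\)-coefficient \(\sum_\ell \phi(f^{(1)}_{0\ell}) \ot \phi_0(b_\ell) = \sum_\ell f^{(2)}_{0\ell} \ot b_\ell\); this is repairable (the \(f^{(i)}_{0\ell}\) are expansions of rational sections and are generically independent by nondegeneracy), but it is not what you wrote. The serious problem is the assertion that the resulting rational \(\operatorname{Ad}_g\) ``defines an \(\mathcal{O}_E\)-linear morphism of sheaves of Lie algebras.'' A rational gauge transformation only gives an isomorphism of rational envelopes; to get a morphism of coherent sheaves you must show \(\operatorname{Ad}_g(\mathcal{A}_{1,x}) = \mathcal{A}_{2,x}\) at \emph{every} \(x\), including the singular point \(s\) of the Weierstraß cubic, where the stalk \(\mathcal{A}_{i,s}\) need not be free and is only constrained to be an isotropic lattice in \(A_K\). ``Matching pole orders through the residue normalization'' is not a procedure that controls a lattice in a non-locally-free stalk, and this is exactly the content the theorem is supposed to deliver --- without it you obtain a birational, not a biregular, identification. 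If you replace the algebraization-plus-pointwise-regularity detour by the observation that \(\phi\) preserves the \(u\)-adic filtration and that both sheaves are \(\textnormal{Proj}\) of their graded section algebras, the difficulty disappears.
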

It is shown in \cite{TBA} that all \(\sigma\)-trigonometric \rmatrs arise as geometric \rmatrs from coherent sheaves of Lie algebras on the nodal Weierstraß cubic with section \(\Loop^\sigma\) on the set of smooth points.
Since holomorphic equivalences are formal, this result and Theorem \ref{thm: intro theorem extending equivalence} give the desired classification:
\begin{maintheorem}\label{thm: theorem A}
For any twisted standard structure \( \delta_t^\sigma \) there is a 
regular equivalence 
\( \phi \) of \( \Loop^\sigma \) and
a BD quadruple
\( Q = \seq{\Gamma_1, \Gamma_2, \gamma, t_\h} \)
such that
\begin{align*}
\delta^\sigma_t \phi 
= 
(\phi \ot \phi) \delta^\sigma_Q.
\end{align*}
Furthermore, if
\( Q' = (\Gamma'_1, \Gamma'_2, \gamma', t'_\h) \)
is another BD
quadruple, then
the twisted bialgebra structures
\( \delta^\sigma_Q \) and
\( \delta^\sigma_{Q'} \)
are regularly equivalent if and only if
there is
an automorphism \( \vartheta \)
of the Dynkin diagram
of \( \Loop^\sigma \) 
such that
\( \vartheta(\Gamma_i) = \Gamma'_i \)
for \( i = 1,2 \),
\( \vartheta \gamma \vartheta^{-1} = \gamma' \) and
\( (\vartheta \ot \vartheta)t_\h = t'_\h \).
\end{maintheorem}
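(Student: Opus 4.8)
The plan is to assemble the statement from the results already established, using the geometric extension theorem as the bridge between holomorphic and regular equivalence. For the existence part, I would begin with an arbitrary twisted standard structure \(\delta_t^\sigma\). By Theorem \ref{thm: delta = [, r]} it is pseudoquasitriangular, hence defined by an \rmatr \(r\) satisfying the two-parametric CYBE, and by Theorem \ref{thm: r(x,y)=X(u-v)} this \(r\) is \(\sigma\)-trigonometric, i.e. globally holomorphically equivalent to a trigonometric solution from the Belavin-Drinfeld list. The Belavin-Drinfeld classification \cite{belavin_drinfeld_solutions_of_CYBE_paper} then attaches a BD quadruple \(Q\) to this solution, whose associated twisted standard structure is \(\delta_Q^\sigma\). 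At this point I have a holomorphic equivalence relating \(\delta_t^\sigma\) to \(\delta_Q^\sigma\); the remaining task is to upgrade it to a regular one.

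The upgrade is where the geometric machinery enters. By \cite{TBA}, every \(\sigma\)-trigonometric \rmatr arises as a geometric \rmatr attached to a coherent sheaf of Lie algebras on the nodal Weierstraß cubic with section \(\Loop^\sigma\) on the smooth locus; in particular this applies both to \(r\) and to the solution defining \(\delta_Q^\sigma\). A holomorphic equivalence restricts, at the smooth point at infinity, to a formal equivalence of the associated geometric \rmatrs, and by Main Theorem \ref{thm: intro theorem extending equivalence} such a formal equivalence extends to an isomorphism of the underlying sheaves of Lie algebras. Since a sheaf isomorphism is algebraic, it induces a regular equivalence \(\phi\) on global sections, and by construction \(\delta_t^\sigma \phi = (\phi \ot \phi)\delta_Q^\sigma\). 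This settles the existence statement.

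For the rigidity statement I would show that on \(\sigma\)-trigonometric \rmatrs regular and holomorphic equivalence coincide, so that the classification reduces to the Belavin-Drinfeld one. In the ``only if'' direction, a regular equivalence between \(\delta_Q^\sigma\) and \(\delta_{Q'}^\sigma\) is in particular holomorphic, so the Belavin-Drinfeld classification of trigonometric solutions up to holomorphic equivalence produces a Dynkin diagram automorphism \(\vartheta\) of \(\Loop^\sigma\) satisfying \(\vartheta(\Gamma_i) = \Gamma_i'\), \(\vartheta\gamma\vartheta^{-1} = \gamma'\) and \((\vartheta \ot \vartheta)t_\h = t_\h'\). Conversely, given such a \(\vartheta\), Belavin-Drinfeld yields a holomorphic equivalence of the two solutions, which is formal at infinity and therefore, by the same application of \cite{TBA} and Main Theorem \ref{thm: intro theorem extending equivalence}, extends to a sheaf isomorphism, hence to a regular equivalence.

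The main obstacle is exactly the discrepancy between holomorphic and regular equivalence: a priori the regular equivalences form a strictly smaller class, so the Belavin-Drinfeld classification alone only controls the problem up to holomorphic gauge. The crux is to verify that every holomorphic equivalence between \(\sigma\)-trigonometric \rmatrs is automatically regular, and this is precisely what the combination of the geometric realization in \cite{TBA} with Main Theorem \ref{thm: intro theorem extending equivalence} provides. I therefore expect the genuine difficulty to reside in that extension result rather than in the assembly sketched above.
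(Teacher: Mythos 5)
Your existence argument follows the paper's proof essentially step for step: Theorem \ref{thm: delta = [, r]} and Theorem \ref{thm: r(x,y)=X(u-v)} together with the Belavin--Drinfeld list produce a holomorphic equivalence \((\phi(u)\ot\phi(v))r^\sigma_t(e^{u/m},e^{v/m})=r^\sigma_Q(e^{u/m},e^{v/m})\) (this is Lemma \ref{lem: holomorphic equivalence t,Q}; note that you also need the regrading Lemma \ref{thm: GxG r = r} to transport the Belavin--Drinfeld normal form, which lives in the principal grading, back to the given \(\Loop^\sigma\)), and then the geometrization of \cite{TBA} combined with Theorem \ref{thm: lifting equivalences} upgrades the Taylor expansion of that equivalence at the point at infinity to a sheaf isomorphism and hence, via \(\Gamma(\Breve{E},-)\) and the K\"unneth identification, to a regular equivalence. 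That half is correct and is the same route as the paper's.

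Where you genuinely diverge is the second half, and there your plan carries a real risk. The paper does \emph{not} deduce the ``only if'' direction from the uniqueness part of the Belavin--Drinfeld classification. It argues directly: a regular equivalence \(\phi\) between \(\delta^\sigma_Q\) and \(\delta^\sigma_{Q'}\) satisfies \(\phi R_Q\phi^{-1}=R_{Q'}\); the explicit formula \eqref{eq: R_Q} shows that the normalizer of the generalized \(0\)-eigenspace of \(R_Q\) is exactly \(\B_+\), so \(\phi\) fixes \(\B_+\) and therefore, by the third part of Lemma \ref{lem: structure parabolic}, induces a Dynkin diagram automorphism \(\vartheta\); the identity \(\vartheta(Q)=Q'\) then follows from \eqref{eq: W_Q}. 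Your route instead invokes, as a black box, the precise uniqueness statement that two normal forms \(X_Q\) and \(X_{Q'}\) are holomorphically gauge-equivalent only if \(Q'=\vartheta(Q)\) for a diagram automorphism \(\vartheta\) --- including the action on the continuous parameter \(t_\h\) and the identification of automorphisms of the affine Dynkin diagram with those of \(\Loop^\sigma\). That statement is far less explicit in \cite{belavin_drinfeld_solutions_of_CYBE_paper} than the existence half, which is presumably why the paper circumvents it; if you keep your route you must verify it carefully rather than cite it. For the ``if'' direction your plan (Belavin--Drinfeld equivalence, hence formal equivalence at infinity, hence sheaf isomorphism, hence regular equivalence) would work, but the paper instead constructs the regular equivalence explicitly from \(\vartheta\) on the Chevalley generators and corrects it by a dilation \(\mu_a\) to restore \(\CC[z^m,z^{-m}]\)-linearity. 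This constructive version is not a stylistic choice: the explicit description of the equivalence induced by \(\vartheta\) is exactly what is needed later to decide when it preserves a parabolic subalgebra in the proof of Theorem \ref{thm: theorem B}, so your more abstract version would leave that subsequent argument without its input.
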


Let \( \Pi^\sigma \) be the set of simple roots of \( \Loop^\sigma \),  \( S \subsetneq \Pi \) and
\( \p^S_+ \subseteq \Loop^\sigma \) be the corresponding parabolic subalgebra (see Section \ref{sec: Loop algebras}).
The standard Lie bialgebra structure \( \delta^\sigma_0 \) on \( \Loop^\sigma \) restricts
to a Lie bialgebra structure on the parabolic subalgebra \( \p^S_+ \). We refer to this Lie bialgebra structure as the restricted standard structure.

In the special case \( \sigma = \id \) and 
\( S = \Pi^{\id} \setminus \{ \widetilde{\alpha}_0 \} \), where
\( \widetilde{\alpha}_0 \) is the affine root of
\( \Loop^{\id} = \g[z, z^{-1}] \),
the classical twists of the restricted standard structure are in one-to-one correspondence with so-called
quasi-trigonometric solutions of CYBE. 
Such \rmatrs were studied and classified in terms of BD quadruples
in \cite{KPSST, pop_stolin}.
We use this classification in Section \ref{sec: sl(n,C) case} to demonstrate the first part of Theorem
\ref{thm: theorem A} 
in the special case \( \g = \mathfrak{sl}(n, \CC ) \).
This connection to quasi-trigonometric solutions serves as a motivation for our study of restricted standard structures.

We discover that Theorem \nolinebreak \ref{thm: theorem A} also gives a full classification of classical twists of restricted Lie bialgera structures.
More formally, for any classical twist 
\( t \in \p^S_+ \ot \p^S_+ \) the structure of \(\Loop^\sigma\) guarantees that the regular equivalence between \(\delta^\sigma_t\) and some \(\delta^\sigma_Q\), given by Theorem \ref{thm: theorem A}, can be chosen to fix the parabolic subalgebra \(\p^S_+\). The following theorem summarizes this observation.

\begin{maintheorem}\label{thm: theorem B}
For any classical twist
\( t \in \p_{+}^S \ot \, \p_{+}^S \)
of the standard Lie bialgebra structure
\( \delta_0^\sigma \) on \( \Loop^\sigma \)
there exists a regular equivalence \( \phi \)
that restricts to an automorphism of
\( \p_+^S \)
and a BD quadruple \( Q = \seq{\Gamma_1, \Gamma_2, \gamma, t_\h} \) such that
\begin{equation*}
\Gamma_1 \subseteq S
\ \text{ and } \
\delta_t^\sigma \phi = (\phi \ot \phi) \delta^\sigma_{Q}.
\end{equation*}
Let \( Q' = (\Gamma'_1, \Gamma'_2, \gamma', t'_\h) \),
\( \Gamma'_1 \subseteq S \), be another BD
quadruple.
A regular equivalence between twisted standard structures
\( \delta_Q^\sigma \) 
and \( \delta_{Q'}^\sigma \) restricts to an automorphism of \( \p^S_+ \)
if and only if the induced Dynkin diagram automorphism \( \vartheta \)
preserves \( S \), i.e.
\( \vartheta(S) = S \).
\end{maintheorem}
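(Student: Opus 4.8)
\section*{Proof proposal}

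The plan is to reduce everything to Theorem \ref{thm: theorem A} together with a single restriction criterion, and then to exploit the freedom of composing with Dynkin diagram automorphisms. First I would record the (elementary) reduction: since \( t \in \p_+^S \ot \p_+^S \) and \( \p_+^S \) is a Lie subalgebra of \( \Loop^\sigma \), the coboundary \( dt \) sends \( \p_+^S \) into \( \p_+^S \ot \p_+^S \); as \( \delta_0^\sigma \) restricts to \( \p_+^S \) by hypothesis, the structure \( \delta_t^\sigma = \delta_0^\sigma + dt \) makes \( \p_+^S \) a sub-Lie-bialgebra. The technical heart I would isolate as a lemma is a \emph{restriction criterion}: the twisted standard structure \( \delta_Q^\sigma \) restricts to \( \p_+^S \) if and only if \( \Gamma_1 \subseteq S \). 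This I would prove by writing out the cobracket from the trigonometric \rmatr\ \( r_Q \): the continuous Cartan part determined by \( t_\h \in \h \ot \h \) always lies in \( \p_+^S \ot \p_+^S \), whereas the discrete part built from \( (\Gamma_1, \Gamma_2, \gamma) \) couples exactly the negative root spaces along roots in the subgroup generated by \( \Gamma_1 \); these remain inside \( \p_+^S \) precisely when \( \Gamma_1 \subseteq S \).

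For the existence statement I would apply Theorem \ref{thm: theorem A} to obtain a regular equivalence \( \psi \) and a BD quadruple \( Q_0 \) with \( \delta_t^\sigma \psi = (\psi \ot \psi)\delta_{Q_0}^\sigma \). Transporting the sub-bialgebra \( \p_+^S \) of \( (\Loop^\sigma, \delta_t^\sigma) \) through \( \psi^{-1} \) shows that \( \delta_{Q_0}^\sigma \) restricts to the parabolic subalgebra \( \mathfrak{q} := \psi^{-1}(\p_+^S) \). I would then choose a Dynkin diagram automorphism \( \vartheta \) so that \( \vartheta(\Gamma_1^{0}) \subseteq S \) and set \( Q := \vartheta(Q_0) \); by the restriction criterion \( \delta_Q^\sigma \) restricts to \( \p_+^S \). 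By Theorem \ref{thm: theorem A} there is a regular equivalence \( \delta_{Q_0}^\sigma \cong \delta_Q^\sigma \), and composing with \( \psi \) yields some equivalence \( \Phi \) with \( \delta_t^\sigma \Phi = (\Phi \ot \Phi)\delta_Q^\sigma \). Now both \( \p_+^S \) and \( \Phi(\p_+^S) \) are parabolic sub-bialgebras of \( (\Loop^\sigma, \delta_t^\sigma) \), so the remaining task is to correct \( \Phi \) by a self-equivalence \( \mu \) of \( (\Loop^\sigma, \delta_t^\sigma) \) with \( \mu(\Phi(\p_+^S)) = \p_+^S \); then \( \phi := \mu \circ \Phi \) restricts to an automorphism of \( \p_+^S \) and satisfies \( \delta_t^\sigma \phi = (\phi \ot \phi)\delta_Q^\sigma \) with \( \Gamma_1 \subseteq S \). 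The existence of \( \mu \) I would deduce from conjugacy of parabolic subalgebras of a fixed type inside the group of regular equivalences.

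For the equivalence criterion, assume \( \Gamma_1, \Gamma_1' \subseteq S \) and let \( \phi \) be a regular equivalence between \( \delta_Q^\sigma \) and \( \delta_{Q'}^\sigma \) inducing the Dynkin diagram automorphism \( \vartheta \). For the direction (\( \Rightarrow \)), if \( \phi(\p_+^S) = \p_+^S \) then \( \phi \) preserves the data intrinsic to the parabolic: \( S \) is recovered from \( \p_+^S \) as the set of simple roots whose opposite root spaces lie in \( \p_+^S \), and since inner automorphisms fixing the standard Borel act trivially on simple roots, the class of \( \phi \) in the Dynkin automorphism group must satisfy \( \vartheta(S) = S \). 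For (\( \Leftarrow \)), if \( \vartheta(S) = S \) I would realize \( \vartheta \) by an explicit automorphism of \( \Loop^\sigma \) preserving the standard Borel and hence sending \( \p_+^S \) to \( \p_+^S \) (possible exactly because \( \vartheta \) preserves \( S \)), and combine it with the Cartan and Levi inner factors of the equivalence, all of which preserve \( \p_+^S \); the resulting equivalence then restricts to \( \p_+^S \).

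The main obstacle I anticipate is concentrated in the affine features of \( \Loop^\sigma \) and lies in two places. First, the restriction criterion requires careful bookkeeping of which root spaces the discrete BD part of \( \delta_Q^\sigma \) couples, tracking the grading by the loop variable \( z \) and the twist by \( \sigma \); this is where the asymmetry between \( \Gamma_1 \) and \( \Gamma_2 \), and hence the appearance of \( \Gamma_1 \subseteq S \) rather than \( \Gamma_2 \subseteq S \), must be pinned down. Second, both the conjugacy step producing \( \mu \) and the realization of \( \vartheta \) as a parabolic-preserving regular equivalence are delicate, since the relevant diagram automorphisms include rotations implemented by powers of \( z \) subject to the \( \sigma \)-equivariance constraint, so these arguments must be carried out inside the group of regular equivalences rather than an abstract adjoint group. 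Once these points are secured, the remaining steps are formal manipulations with Theorem \ref{thm: theorem A} and the restriction criterion.
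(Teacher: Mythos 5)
Your overall strategy --- apply Theorem \ref{thm: theorem A}, then correct by a Dynkin diagram automorphism chosen so that \( \Gamma_1 \) lands inside \( S \) --- is the same as the paper's, and your restriction criterion (\( \Gamma_1 \subseteq S \) forces \( t_Q \in \p_+^S \ot \p_+^S \)) is indeed recorded there as a remark. However, the two steps you flag as ``delicate'' are precisely where the proof lives, and as written both have genuine gaps. First, you assert without argument that \( \psi^{-1}(\p_+^S) \) is a parabolic subalgebra and that a diagram automorphism \( \vartheta \) with \( \vartheta(\Gamma_1^0) \subseteq S \) exists. Neither is automatic: a regular equivalence need not carry a standard parabolic to a standard parabolic. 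The paper establishes this through a chain of structural facts: from \( W_t \subseteq \p_+^S \times \Loop^\sigma \) one gets \( C^1_{Q'} = \N_+ \add \h_1 \add \N_-^{\Gamma_1} \subseteq \phi_1(\p_+^S) \) with \( \h_1 \) coisotropic in \( \h \); a lemma on subalgebras containing a coisotropic piece of \( \h \), combined with self-normalization of \( \p_+^S \), forces \( \B_+ \subseteq \phi_1(\p_+^S) \); a second lemma shows any subalgebra containing \( \B_+ \) is a standard parabolic \( \p_+^{S'} \) (with \( \Gamma_1 \subseteq S' \)); and finally Borel conjugacy inside the Levi \( \mathfrak{S}^S + \h \) produces an inner regular equivalence whose composition with \( \phi_1^{-1} \) fixes \( \B_+ \) and hence induces the required \( \vartheta \) with \( \vartheta(S') = S \). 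None of this is replaceable by the one-line ``transport'' you propose.

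Second, your closing correction step --- finding a self-equivalence \( \mu \) of \( (\Loop^\sigma, \delta_t^\sigma) \) with \( \mu(\Phi(\p_+^S)) = \p_+^S \) by ``conjugacy of parabolic subalgebras of a fixed type inside the group of regular equivalences'' --- does not work. The map \( \mu \) must preserve the Lie \emph{bialgebra} structure \( \delta_t^\sigma \), not merely be a regular automorphism; by the uniqueness half of Theorem \ref{thm: theorem A}, bialgebra self-equivalences of a twisted standard structure correspond to diagram automorphisms fixing the BD quadruple, which is a finite (generically trivial) group, so there is no conjugacy theorem for parabolic sub-bialgebras within it. The paper sidesteps this entirely: having identified \( \phi_1(\p_+^S) = \p_+^{S'} \) and produced \( \vartheta \) with \( \vartheta(S') = S \), the realization \( \phi_3 \) of \( \vartheta \) from the second part of Theorem \ref{thm: theorem A} carries \( \p_+^{S'} \) onto \( \p_+^S \), so the composition \( \phi_3 \phi_1 \) restricts to \( \p_+^S \) by construction and no further correction is needed. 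You should restructure the existence argument along these lines rather than deferring to a conjugacy statement that is unavailable.
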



The theorems stated above provide us with a list of interesting consequences:
\begin{itemize}
  \item Letting \( \sigma = \id \) and \( S = \Pi^{\id} \setminus \{ \widetilde{\alpha}_0 \} \) in Theorem \ref{thm: theorem B} we obtain an alternative proof of
  the classification of all quasi-trigonometric solutions \cite{KPSST, pop_stolin};
  \item The necessary and sufficient condition to have an \( \id \)-trigonometric \rmatr which is not regularly equivalent to a quasi-trigonometric one is the existence of a BD qudruple \( (\Gamma_1, \Gamma_2, \gamma, t_\h ) \) such that for any automorphism \( \vartheta \) of the extended Dynkin diagram of \( \g \) we have
   \( \widetilde{\alpha}_0 \in \vartheta(\Gamma_1) \).
  Analyzing Dynkin diagrams, we conclude that
  any \( \id \)-trigonometric \rmatr is regularly equivalent to a quasi-trigonometric one
  if and only if
  \( \g \) is of type
  \( A_n, C_n, B_{2-4} \) or \( D_{4-10} \);
  \item We have mentioned that any \( \sigma \)-trigonometric \rmatr is holomorphically equivalent to a trigonometric one in the sense of the Belavin-Drinfeld classification.
  Combining the structure theory of \( \Loop^\sigma \) (Section \ref{sec: Loop algebras}) and Theorem \ref{thm: theorem A} we can improve that result and get more control over that equivalence.
  More precisely,
  let \( \nu \) be an outer automorphism of \( \g \), \( \sigma \) be a finite order automorphism of \( \g \) whose coset is conjugate to \( \nu \textnormal{Inn}_{\CC - \textnormal{LieAlg}}(\g) \) and
  \( r_t^\sigma \) be 
  the \( \sigma \)-trigonometric \rmatr
  defining a twisted standard Lie bialgebra
  structure \( \delta_t^\sigma \) on 
  \( \Loop^\sigma \).
  Applying to \( r_t^\sigma\)
  the regular equivalence, given by Theorem \ref{thm: theorem A},
  and regrading to the principle grading, i.e. grading corresponding to the Coxeter automorphism \( \sigma_{(\mathbf{1};|\nu|)} \),
  we obtain a trigonometric \rmatr \( X \) depending on the quotient of its parameters:
  \begin{equation*}
      r^\sigma_t(x,y) \xmapsto{\text{regular eq.}} r^\sigma_Q(x,y) \xmapsto{\text{regrading}} r_Q^{\sigma_{(\mathbf{1};|\nu|)}}(x,y) = X(x/y);
  \end{equation*}
  \item We answer questions one and two posed at the end of \cite{burban_galinat_stolin}
  concerning an explicit formula for the
  quasi-trigonometric solution given
  by a BD quadruple \( Q \) and its
  connection with the trigonometric solution described by the same quadruple \( Q \) (see \cite{belavin_drinfeld_solutions_of_CYBE_paper}). 
\end{itemize}

In \cite{montaner_stolin_zelmanov} Montaner, Stolin and Zelmanov classified all Lie bialgebra structures on \( \g[z] \) by classifying classical twists within each of four possible Drinfeld double algebras.
A main point in their argument is the aforementioned classification of quasi-trigonometric solutions
\cite{pop_stolin}
or, equivalentely, the classification of classical twists within one of the doubles. 
From this perspective, our work
is a natural step towards the classification of all Lie bialgebra structures on \( \Loop^{\id} = \g[z,z^{-1}] \) or, more generally, \(\Loop^\sigma\).

\vspace{0.5cm}
\noindent \emph{Acknowledgements}.
The authors are thankful to I. Burban and A. Stolin for introduction to the topic and fruitful discussions
as well as to E. Karolinsky
for useful comments.
R.A. also acknowledges the support from the DFG project Bu-1866/5-1.

\section{Preliminaries}
In this section we give a brief review of the theory of Lie bialgebras and loop algebras as well as set up notation and terminology used throughout the paper.
Most of the presented results
on Lie bialgebras can
be found in 
\cite{chari_pressley, etingof_schiffmann} and \cite{kosmann-schwarzbach}.
A detailed exposition of the theory of loop
algebras can be found in 
\cite{carter, kac_book} and
\cite[Section X.5]{helgason}.

\subsection{Lie bialgebras, Manin triples and twisting}
A \emph{Lie coalgebra} is a pair
\( \seq{L, \delta} \) 
consisting of a vector space \( L \)
over a field \( k \) of characteristic zero
and a linear map
\( \delta \colon \nobreak L \longrightarrow \nobreak L \ot L \),
called \emph{Lie cobracket},
such that for all \( x \in L \)
\begin{equation}\label{eq: Lie coalgebra conditions}
  \delta(x) + \tau\delta(x) = 0 \
  \textnormal{ and } \
  \textnormal{Alt}((\delta \ot 1)\delta(x)) = 0,
\end{equation}
where \( \tau(x_1 \ot x_2) \coloneqq x_2 \ot x_1 \) and 
\( 
  \textnormal{Alt}(x_1 \ot x_2 \ot x_3)
  \coloneqq x_1 \ot x_2 \ot x_3 + 
    x_2 \ot x_3 \ot x_1 + 
    x_3 \ot x_1 \ot x_2
\).
These conditions guarantee that
the restriction of the dual map
\( 
  \dual{\delta} \colon \dual{\seq{L \ot L}}
    \longrightarrow \dual{L}
\)
to \( \dual{L} \ot \dual{L} \)
defines a Lie algebra structure.
A \emph{morphism} between two Lie coalgebras 
\( \seq{L, \delta} \) and
\( \seq{L', \delta'} \) 
is a linear map
\( \phi \colon L \longrightarrow L' \)
such that
\begin{equation}\label{eq: coalgebra morphism}
  (\phi \ot \phi)\delta = \delta' \phi.
\end{equation}
A \emph{Lie bialgebra} is a triple\footnote{For convenience, the notation \( [-,-] \) for the Lie bracket on \( L \) will be omitted from the triple.}  
\( \seq{L, [-,-], \delta} \)
such that
\( \seq{L, [-,-]} \) is a Lie algebra,
\( \seq{L, \delta} \) is a Lie coalgebra
and the following
compatibility condition holds
\begin{equation}\label{eq: 1-cocycle conditoin}
  \delta\seq{[x,y]} = x \cdot \delta(y) -
                      y \cdot \delta(x)
                      \qquad
               \forall x,y \in L,
\end{equation}
where 
\( 
  x \cdot (y_1 \ot y_2) \coloneqq [x, y_1] \ot y_2 +
                           y_1 \ot [x, y_2]
\).
In other words,
\( \delta \) is a 1-cocycle of \( L \) with
values in \( L \ot L \).
A linear map between two Lie bialgebras is
a \emph{Lie bialgebra morphism} if it is
a morphism of both Lie algebra
and Lie coalgebra structures.

Lie bialgebras are closely related to \emph{Manin triples}, i.e.
triples \( \seq{L, L_+, L_-} \), where \( L \) is a Lie
algebra equipped with an invariant non-degenerate symmetric
bilinear form \( B \) and \( L_\pm \) are isotropic subalgebras of \( L \) 
with respect to that form, such that
\( L = L_+ \add L_- \).%
\footnote{We write \( A \add B \) (or
\( A \oplus B \))
meaning the direct
sum of \( A \) and \( B \) as vector spaces (modules),
but not as Lie algebras.
The latter is denoted by \( A \times B \).}
The definition immediately implies that \( L_\pm \)
are Lagrangian subalgebras of \( L \) which are paired non-degenerately by \( B \).
We say that two Manin triples \( \seq{L, L_+, L_-} \)
and \( \seq{L', L'_+, L'_-} \) are \emph{isomorphic} if there is
a Lie algebra isomorphism 
\( \phi \colon L \longrightarrow L' \) such that
\begin{equation}
\phi(L_\pm) = L'_\pm 
\
\text{ and }
\
B(x,y) = B(\phi(x), \phi(y))
\
\text{ for all } x,y \in L.
\end{equation}

Every Lie bialgebra \( \seq{L, \delta} \)
gives rise to the Manin triple
\( \seq{L \add \dual{L}, L, \dual{L}} \)
with the canonical bilinear form \( B \)
given by
\begin{equation}\label{eq: canonical bilinear form on D}
  B(x + f, y + g) \coloneqq f(y) + g(x)
  \qquad
  \forall x, y \in L, 
  \ 
  \forall f, g \in \dual{L},
\end{equation}
and the Lie algebra structure on 
\( L \add \dual{L} \)
defined by
\begin{equation}\label{eq: unique [], classical double}
  [x, f] \coloneqq \ad_x^* f + (f \ot 1)(\delta(x))
  \qquad
  \forall x \in L, 
  \
  \forall f \in \dual{L},
\end{equation} 
where \( \ad^*_x \coloneqq -\dual{\ad_x} \) is the coadjoint action. 
\begin{remark}
The Lie algebra structure
\eqref{eq: unique [], classical double}
is the unique Lie algebra structure on
\( L \add \dual{L} \) making the canonical form
\( B \) invariant and \( L \), \( \dual{L} \) into Lagrangian subalgebras. 
The space \( L \add \dual{L} \) equipped with
this particular Lie algebra structure is called
the \emph{classical double} of \( \seq{L, \delta} \).
\end{remark}
The converse statement is not true, i.e.
not every Manin triple 
\( \seq{L, L_+, L_-} \) induces
a Lie bialgebra structure on \( L_+ \).
However, this is the case when the dual map
\( 
  \dual{[-,-]} \colon \dual{L_-} \longrightarrow 
               \dual{\seq{L_- \ot L_-}}
\)
of the Lie bracket on \( L_- \) restricts to a map
\( \delta \colon L_+ \longrightarrow L_+ \ot L_+ \), where we use the injection \(L_+ \longrightarrow \dual{L}_-\) induced by \(B\).
This condition can be equivalently formulated in the
following way: there is a linear map
\( \delta \colon L_+ \to L_+ \ot L_+ \) such that
\begin{equation}\label{eq: M. triple -> Bialgebra if exists delta}
  B(\delta(x), y \ot z) = B(x, [y,z])
  \qquad 
  \forall x \in L_+, \ \forall y,z \in L_-.
\end{equation}
When this condition is satisfied, we say that
the Manin triple
\( \seq{L, L_+, L_-} \) 
\emph{defines} the Lie bialgebra
\( \seq{L_+, \delta} \).

\begin{remark}\label{rem: isomorphic manin tiples define isomorphic bialgebras}
Let \( \phi \) be an isomorphism between
two Manin triples
\( M = \seq{L, L_+, L_-}  \) and 
\( M' = \seq{L', L'_+, L'_-}  \).
If \( M \) defines a Lie bialgebra structure
\( (L_+, \delta) \), then \( M' \) also
defines a Lie bialgebra
structure \( (L'_+, \delta') \) and \(\phi|_{L_+} \colon (L_+, \delta) \longrightarrow (L'_+, \delta')\) is a Lie bialgebra isomorphism.
\end{remark}

\begin{remark}
  Generally, there may exist many non-isomorphic Manin triples
  defining the same Lie bialgebra structure.
  However, in the finite-dimensional case the condition
  \eqref{eq: M. triple -> Bialgebra if exists delta}
  holds automatically and the correspondence
  between Manin triples and Lie bialgebras described above
  is one-to-one.
\end{remark}

Having a Lie bialgebra structure, we can produce a new bialgebra structure by means of a procedure called \emph{twisting}.
Let \( \seq{L, \delta} \) be a Lie bialgebra
and \( t \in L \ot L \) be a skew-symmetric tensor
satisfying the identity
\begin{equation}
  \textnormal{CYB}(t)
  = 
  \textnormal{Alt}\seq{(\delta \ot 1)t},
\end{equation}
where 
\(
  \textnormal{CYB}(t)
  \coloneqq
  [t^{12}, t^{13}] +
  [t^{12}, t^{23}] +
  [t^{13}, t^{23}]
\).
Then the linear map 
\( \delta_t \coloneqq \delta + dt \), where
\( dt(x) \coloneqq x\cdot t \) for all
\( x \in L \),
defines a new Lie bialgebra structure on \( L \).
The skew-symmetric tensor \( t \) is called
a \emph{classical twist} of \( \delta \).

It was implicitly shown in
\cite{KPSST, pop_stolin, stolin_sln, stolin_maximal_orders}
that the problem of classification of classical twists
of some particular Lie bialgebra structures
can be reduced to the classification
of Lagrangian Lie subalgebras. 
In the following theorem we summarize and generalize
these ideas.

\begin{theorem}\label{thm: W <-> t <-> T correspondence}
  Let \( \seq{L_+, \delta} \) be a Lie bialgebra defined
  by the Manin triple \( \seq{L, L_+, L_-} \).
  Then there are the following one-to-one correspondences:
  
 \begin{tikzpicture}[every node/.style={inner sep=.15cm,outer sep=0cm}]
  \node[] (A) {\mlnode{
    Classical twists of \( \delta \), i.e. \\
    skew-symmetric tensors \( t \in L_+ \ot L_+ \) \\
    satisfying \( 
    \textnormal{CYB}(t)
    =
    \textnormal{Alt}\seq{(\delta \ot 1)t} \)
  }};
  \node[below left=1cm and -3cm of A] (B) {\mlnode{
    Lagrangian Lie subalgebras \( L_t \subseteq L \) \\
    complementary to \( L_+ \) and \\
    commensurable with \( L_- \), i.e. \\
    \( \dim(L_t + L_-) / (L_t \cap L_-) < \infty \)
  }};
    \node[below right=1cm and -3cm of A] (C) {\mlnode{
    Linear maps 
    \( T \colon L_- \longrightarrow L_+ \) such that \\
    \(\dim(\im(T)) < \infty \) and for all \( w_1, w_2, w_3 \in L_- \) \\
    holds \( B(Tw_1,w_2)+ B(w_1,Tw_2) = 0\) and \ \  \\
    \( B\seq{[Tw_1 - w_1, Tw_2 - w_2], Tw_3 - w_3} = 0 \)
  }};
  
\draw[<->] (A) edge (B) (B) edge (C) (C) edge (A);
\end{tikzpicture} 
\end{theorem}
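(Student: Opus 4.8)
The plan is to use condition (C) as a hub and establish the two bijections $(B)\leftrightarrow(C)$ and $(A)\leftrightarrow(C)$ separately, then compose them. The correspondence $(B)\leftrightarrow(C)$ is pure linear algebra inside the Manin triple \( \seq{L, L_+, L_-} \), while $(A)\leftrightarrow(C)$ comes from contracting the tensor \( t \) with the form \( B \); the only genuinely nontrivial point, which I expect to be the main obstacle, is matching the classical twist equation with the second (subalgebra) condition in (C).

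For $(B)\leftrightarrow(C)$, to a linear map \( T \colon L_- \to L_+ \) I associate the subspace \( L_t \coloneqq \Span\set{Tw - w \mid w \in L_-} \). Since its projection to \( L_- \) along \( L_+ \) is a bijection, \( L_t \) is complementary to \( L_+ \); conversely, every complement of \( L_+ \) is the graph of a unique such \( T \), so \( T \mapsto L_t \) is a bijection between linear maps \( L_- \to L_+ \) and complements of \( L_+ \). It then remains to translate the defining properties. Expanding \( B(Tw_1 - w_1, Tw_2 - w_2) \) and using that \( L_+ \) and \( L_- \) are isotropic shows that \( L_t \) is isotropic if and only if \( B(Tw_1, w_2) + B(w_1, Tw_2) = 0 \), the first condition in (C); moreover an isotropic complement of the Lagrangian \( L_+ \) is automatically Lagrangian, i.e. \( L_t = L_t^\perp \). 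Consequently \( L_t \) is closed under the bracket if and only if \( B([y_1,y_2], y_3) = 0 \) for all \( y_i \in L_t \), which is exactly the second condition in (C). Finally, from \( L_t \cap L_- \cong \ker T \) and \( (L_t + L_-)/L_- \cong \im(T) \) one computes \( \dim (L_t + L_-)/(L_t \cap L_-) = 2\dim\im(T) \), so commensurability with \( L_- \) is equivalent to \( \dim\im(T) < \infty \).

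For $(A)\leftrightarrow(C)$, to a skew-symmetric \( t = \sum_i a_i \ot b_i \in L_+ \ot L_+ \) I associate \( T_t(w) \coloneqq (B(-,w)\ot\id)(t) = \sum_i B(a_i, w)\, b_i \), whose image is spanned by the \( b_i \) and hence finite-dimensional. Non-degeneracy of \( B \) on \( L_+ \times L_- \) (equivalently \( L_\pm^\perp = L_\pm \)) makes \( t \mapsto T_t \) injective, and a finite-rank argument — solving for the \( a_i \) against finitely many \( w \) for which the matrix \( [B(b_i, w)] \) is invertible — shows it surjects onto all finite-image maps satisfying the first condition; reversing the short computation above identifies skew-symmetry of \( t \) with that first condition. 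What remains is to show that the classical twist equation \( \textnormal{CYB}(t) = \textnormal{Alt}\seq{(\delta \ot 1)t} \) holds if and only if \( T_t \) satisfies the second condition in (C).

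I expect this last equivalence to be the crux of the proof. Writing \( y_i = T_t w_i - w_i \in L_t \) and using invariance of \( B \), one first checks that \( c(w_1,w_2,w_3) \coloneqq B([y_1,y_2],y_3) \) is an \emph{alternating} trilinear form, so the subalgebra condition is precisely \( c \equiv 0 \); this alternation is what reconciles it with the visibly alternating twist equation. Expanding \( [y_1,y_2] \) bilinearly, the purely \( L_+ \) term \( [T_t w_1, T_t w_2] \) paired against the \( L_- \)-part of \( y_3 \) reproduces, after substituting \( T_t w = (B(-,w)\ot\id)t \), the pairing of \( \textnormal{CYB}(t) \) with \( w_1 \ot w_2 \ot w_3 \); the mixed terms \( [T_t w_i, w_j] \) are evaluated through the double bracket \eqref{eq: unique [], classical double}, whose \( L_+ \)-component involves \( \delta(T_t w_i) \) and hence \( (\delta\ot 1)t \), while the term \( [w_1,w_2]\in L_- \) paired against \( T_t w_3 \) is rewritten with the defining relation \eqref{eq: M. triple -> Bialgebra if exists delta}; together these reproduce \( \textnormal{Alt}\seq{(\delta\ot1)t} \). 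Thus \( c \) equals the pairing of \( \textnormal{CYB}(t) - \textnormal{Alt}\seq{(\delta\ot1)t} \) against \( w_1 \ot w_2 \ot w_3 \), and since \( B \) separates \( L_+ \ot L_+ \ot L_+ \) from \( L_- \ot L_- \ot L_- \), the twist equation is equivalent to \( c \equiv 0 \). The main difficulty is the term-by-term bookkeeping — matching the three summands of \( \textnormal{CYB} \) and of \( \textnormal{Alt} \) with the expansion of the double bracket and tracking the signs introduced by skew-symmetry; everything else is formal linear algebra.
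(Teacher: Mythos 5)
Your proposal is correct and follows essentially the same route as the paper: both pass through the map \( T \) and its graph \( L_t = \set{Tw - w \mid w \in L_-} \), translate skew-symmetry and commensurability exactly as you do, and reduce the twist equation to the subalgebra condition via the identity \( B(w_1\ot w_2\ot w_3,\ \textnormal{CYB}(t)-\textnormal{Alt}((\delta\ot 1)t)) = -B([Tw_1-w_1,Tw_2-w_2],Tw_3-w_3) \). The only differences are cosmetic: the paper cites Karolinsky--Stolin for that identity rather than expanding it in the double as you plan to, and it contracts the second tensor factor of \( t \) where you contract the first (a sign convention that does not affect the bijections).
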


\begin{proof}
  Let \( t = x_i \ot y^i\in L_+ \ot L_+ \)
  be a classical twist%
  \footnote{We use the Einstein summation convention:
  \( x_i \ot y^i = \sum_{i} x_i \ot y^i \).} 
  of \( \delta \).
  Define the linear map \( T \colon L_- \longrightarrow L_+ \) and the subspace
  \( L_t \subseteq L \) by
  \begin{equation}\label{eq: t -> T}
    T \coloneqq B(y^i, -)x_i \ \
    \textnormal{and} \ \
    L_t \coloneqq \set{Tw - w \mid w \in L_-}.
  \end{equation}
  We now show that they meet the requirements of the theorem.
  The conditions \( \dim(\im(T)) < \infty \) and
  \( L_+ \dotplus L_t = L \) hold by definition.
  For all \( w_1, w_2 \in L_- \) we have
  \begin{equation}\label{eq: skew sym t and T <-> Lagrangian}
  \begin{split}
    B\seq{Tw_1 - w_1, Tw_2 - w_2}
    &= -  B(Tw_1,w_2)- B(w_1,Tw_2) \\
    &= -B(y^i, w_1)B(x_i, w_2) - B(y^i, w_2)B(x_i, w_1).
  \end{split}
  \end{equation}
  Therefore, the skew-symmetry of \( t \) is
  equivalent to the skew-symmetry of \( T \) and to
  \( L_t \) being a Lagrangian subspace.
  To prove the commensurability of \( L_t \) and \( L_- \)
  we note that \( \ker(T) = L_t \cap L_- \) and hence
  \begin{equation}\label{eq: commensurability Lt and L-}
    \dim\seq{L_- / \seq{L_t \cap L_-}} = 
    \dim\seq{\textnormal{im}(T)}.
  \end{equation}
  This shows that \( L_t \cap L_- \) has finite codimension
  inside \( L_- \). The commensurability now follows from the
  fact that \( L_- \) has codimension 
  at most \( \dim(\im(T)) \) inside \( L_t + L_- \).
  Finally, the last condition follows from the identity
  \begin{equation}\label{eq: CYBE - Alt = B([],)}
    B\seq{%
    w_1 \ot w_2 \ot w_3, 
    \textnormal{CYB}(t) - 
    \textnormal{Alt}\seq{\seq{\delta \ot 1}t}
    }
    = - B\seq{[Tw_1 - w_1, Tw_2 - w_2], Tw_3 - w_3},
\end{equation}
where \( w_1, w_2, w_3 \in L_- \).
This identity
is obtained by repeating the argument in the proof of
\cite[Theorem 7]{karolinsky_Stolin} within
our framework.

Conversely, given a Lagrangian Lie subalgebra 
\( L' \subseteq L \),
satisfying the conditions of the theorem,
we define the linear map \( T \colon L_- \longrightarrow L_+ \) 
in the following way:
any \( w \in L_- \) can be uniquely written as
\( w_+ + \, w' \), for some \( w_+ \in L_+ \) and
\( w' \in L' \); We let \( T(w) \coloneqq w_+ \).
Then \( L' = \set{Tw - w \mid w \in L_-} \) and
the commensurability of \( L' \) and \( L_- \) implies
that the rank of \( T \) is finite.
The other two conditions on \( T \) hold 
because of the relations
\eqref{eq: skew sym t and T <-> Lagrangian} and
the Lagrangian property of \( L' \).
To construct the classical twist \( t \in L_+ \ot L_+ \)
we note that \( B \) gives a non-degenerate pairing between the finite-dimensional spaces
\( L_- / \ker(T) \) and \( \im(T) \).
Let \( \set{Tw_i}_{i=1}^n \) be a basis for \( \im(T) \) and
\( \set{v^i + \ker(T)}_{i=1}^n \) be its dual basis for
\( L_- / \ker(T) \). Then
\begin{equation}
  B(w_k, -Tv^i)Tw_i = B(Tw_k, v^i)Tw_i = Tw_k,
\end{equation}
for all \( k \in \{1,\ldots, n\} \). 
Since \(T\) is completely determined by its action on \( \{w_i\}^n_{i = 1}\), we have
the equality \(T = -B(Tv^i,-)Tw_i\).
We define \( t \coloneqq -Tw_i \ot Tv^i \). 
The identities 
\eqref{eq: CYBE - Alt = B([],)} and
\eqref{eq: skew sym t and T <-> Lagrangian}
guarantee that \( t \) meets the desired requirements and \(L' = L_t\).
\end{proof}

\begin{remark}\label{rem: manin defining twisted bialgebra}
 It follows that 
 if \( \seq{L_+, \delta} \) is a Lie bialgebra defined by
 the Manin triple \( \seq{L, L_+, L_-} \) and
 \( t \) is a classical twist of \( \delta \), then
 the twisted Lie bialgebra \( \seq{L_+, \delta + dt} \)
 is defined by the Manin triple
 \( \seq{L, L_+, L_t} \). Equivalently,
 \begin{equation}
   B\seq{\delta(x) + x \cdot t, \seq{Tw_1 - w_1} \ot \seq{Tw_2 - w_2}}
   =
   B\seq{x, [Tw_1 - w_1, Tw_2 - w_2]},
 \end{equation}
 for all \( x \in L_+ \) and \( w_1, w_2 \in L_- \). 
\end{remark}

\subsection{Loop algebras}\label{sec: Loop algebras}
Let \( \g \) be a fixed
finite-dimensional simple Lie algebra 
over \( \CC \) and
\( \sigma \)
be an automorphism of \( \g \)
of finite order \( |\sigma| \in \ZZ_+ \).
The eigenvalues of \( \sigma \) are
\( \varepsilon^k_{\sigma} \coloneqq e^{2 \pi i k /|\sigma|} \),
\( k \in \ZZ \),
and we have the following
\( \ZZ / |\sigma| \ZZ \)-gradation of \( \g \)
\begin{equation}
  \g = \bigoplus_{k=0}^{|\sigma|-1} \g^\sigma_{k},
\end{equation}
where \( \g_{k}^\sigma \)
is the eigenspace of \( \sigma \)
corresponding to the eigenvalue
\( \varepsilon^k_{\sigma} \).
The tensor product
\begin{equation}\label{eq: Loop regular funcs}
  \Loop \coloneqq \g \ot \CC[z,z^{-1}] =
  \bigoplus_{k \in \ZZ} z^k \g
  = \set{f \colon \CC^* \longrightarrow \g \mid f \text{ is regular} },
\end{equation}
equipped with the bracket described by
\( [z^i x, z^j y] \coloneqq z^{i+j}[x,y] \), 
for all \( x,y \in \g \) and \( i,j \in \ZZ \), is a
\( \ZZ \)-graded Lie algebra over \( \CC \).
\emph{The loop algebra} \( \Loop^\sigma \)
over \( \g \) is the \( \ZZ \)-graded
Lie subalgebra of \( \Loop \)
defined by
\begin{equation}\label{eq: Loop sigma regular funcs}
\Loop^\sigma \coloneqq \bigoplus_{k \in \ZZ} z^k \g^\sigma_k
=
\set{f \in \Loop \mid \sigma(f(z)) = f(\varepsilon_\sigma z)},
\end{equation}
where \( \g^\sigma_{k + \ell |\sigma|} = \g^\sigma_{k} \) for all \( \ell \in \ZZ \).
It possesses
an invariant non-degenerate symmetric bilinear form 
\( B \), which is given by
\begin{equation}\label{eq: form on L sigma}
  B(f,g)
  \coloneqq
  \textnormal{res}_{z = 0} \left[ \frac{1}{z} \K(f(z), g(z)) 
  \right]
  \qquad
  \forall f,g \in \Loop,
\end{equation}
where \( \K \) stands for the Killing form on \( \g \).

\begin{remark}
We can extend \( \sigma \) to an automorphism on \( \Loop \)
by
\( \sigma(z^k x) \coloneqq (z /\varepsilon_{\sigma})^k \sigma(x) \). Then 
\( \Loop^\sigma \) can be viewed 
as the Lie subalgebra of \( \Loop \) consisting of
fixed points
of the extended action of \( \sigma \) on \( \Loop \).
In particular, we have the identity
\( \Loop = \Loop^{\textnormal{id}} \).
This motivates our choice of notation.
\end{remark}

\subsubsection{Structure theory (outer automorphism case)}
The classification of all finite order automorphisms
of \( \g \), explained in
\cite{kac_finite_order_automorphisms, helgason, kac_book},
gives the following relation at the level of loop algebras:
for any finite order automorphism
\( \sigma \)
there is an automorphism \( \nu \) of \( \g \),
induced by an automorphism of the corresponding
Dynkin diagram, such that
\( \Loop^\sigma \cong \Loop^\nu \).
Therefore, we first describe
the structure of \( \Loop^\nu \) and then
explain how regrading of \( \Loop^\nu \) carries
over the structure theory to \(\Loop^\sigma\).

Let \( \g = \n'_- \add \h' \add \n'_+ \) 
be a triangular decomposition of \( \g \)
and \( \widetilde{\nu} \) be an automorphism of the
corresponding Dynkin diagram.
The induced outer automorphism \( \nu \)
of \( \g \)
is described explicitly by
\begin{equation}
  \nu(x^{\pm}_i) = x^{\pm}_{\widetilde{\nu}(i)}, \qquad
  \nu(h^{~}_i) = h^{~}_{\widetilde{\nu}(i)},
\end{equation}
where \( \{ x^-_i, h^{~}_i, x^+_i \} \)
is a fixed set of standard
Chevalley generators for \( \g \).
The order of such an automorphism is necessarily 1, 2 or 3.
The subalgebra \( \g^\nu_0 \) turns
out to be simple with the following triangular decomposition
\begin{equation}\label{eq: triangular decomposition for g0}
  \g^\nu_0 = 
  \underbrace{(\g^\nu_0 \cap \n'_- )}_{\eqqcolon \, \n_-}
  \add
  \underbrace{(\g^\nu_0 \cap \h')}_{\eqqcolon \, \h}
  \add
  \underbrace{(\g^\nu_0 \cap \n'_+)}_{\eqqcolon \, \n_+}.
\end{equation}
Moreover, when \( |\nu|  = 2 \text{ or } 3 \) the subspace
\( \g^\nu_1 \) is an irreducible
\( \g^\nu_0 \)-module. In the case \( |\nu| = 3 \) it is 
isomorphic (as a module) to \( \g^\nu_2 = \g^\nu_{-1} \).

\begin{remark}\label{rem: L^nu is determined by ord(nu)}
For any automorphism \( \rho \) of \( \g \) we have
a natural \(\ZZ\)-graded Lie algebra isomorphism
\( \Loop^\sigma \cong \Loop^{\rho \sigma \rho^{-1}} \)
given by \( z^k x \longmapsto z^k \rho(x) \).
Since the automorphism \( \nu \) is defined by its order
up to conjugation, this result implies that \( \Loop^\nu \)
is also determined by the order of the
automorphism \( \nu \).
\end{remark}

A pair \( \seq{\alpha, k} \),
where \( \alpha \in \dual{\h}  \)
and \( k \in \ZZ \), is called a \emph{root}
if the joint eigenspace
\begin{equation}\label{eq: eqigenspace g nu alpha}
  \g^\nu_{\seq{\alpha, k}}
  =
  \set{x \in \g^\nu_k \mid [h, x] = \alpha(h) x \ \forall h \in \h}
\end{equation}
is non-zero.
Let \( \Phi \) be the set of all roots and
\( \Phi_k \) be the set of roots of the form
\( \seq{\alpha, k} \).
The triangular decomposition
\eqref{eq: triangular decomposition for g0} of
\( \g^\nu_0 \)
gives rise to the polarization
\( \Phi_0 = \Phi_0^- \cup \set{(0,0)} \cup \Phi_0^+  \).
For convenience we introduce two more subsets of roots:
\begin{equation}
\begin{split}
  \Phi^+ &\coloneqq  \Phi_0^+ \cup \set{(
  \alpha, k) \in \Phi \mid k > 0}, \\
  \Phi^- &\coloneqq \Phi_0^- \cup \set{(
  \alpha, k) \in \Phi \mid k < 0}.
\end{split}
\end{equation}
The elements of \( \Phi^+ \) and \( \Phi^- \)
are called \emph{positive} and \emph{negative roots} respectively.
It is clear that
\( \Phi = \Phi^- \cup \{(0,0)\} \cup  \Phi^+ \) and
\( -\Phi^+ = \Phi^- \).
Denoting \( z^k \g^\nu_{\seq{\alpha, k}} \) by
\( \Loop^\nu_{\seq{\alpha, k}} \) we get the root space
decomposition
\begin{equation}\label{eq: L nu decomposition}
\Loop^\nu = \bigoplus_{\seq{\alpha,k} \in \Phi}\Loop^\nu_{\seq{\alpha, k}},
\end{equation}
where 
\( \dim (\Loop^\nu_{\seq{\alpha, k}}) = 1 \)
if \( \alpha \neq 0 \)
and 
\( \Loop^\nu_{(0,0)} = \h \). 
The form \( B \) pairs the spaces
\( \Loop^\nu_{\seq{\alpha, k_1}} \) and
\( \Loop^\nu_{\seq{\beta, k_2}} \)
non-degenerately
if 
\(  (\alpha, k_1) + (\beta, k_2) = (0,0) \);
otherwise
\( B(\Loop^\nu_{\seq{\alpha, k_1}}, \Loop^\nu_{\seq{\beta, k_2}}) = 0 \). 
Defining 
\begin{equation}
\mathfrak{N}_\pm \coloneqq \bigoplus_{\seq{\alpha,k} \in \Phi^\pm} \Loop^\nu_{\seq{\alpha, k}},
\end{equation} 
we obtain analogues of a triangular decomposition
and Borel subalgebras for \( \Loop^\nu \),
namely
\begin{equation}
\Loop^\nu =  \mathfrak{N}_- \add \h \add \mathfrak{N}_+
\ \text{ and } \
\B_\pm \coloneqq \h \add \mathfrak{N}_\pm.
\end{equation}

Let \( \{ \alpha_1, \ldots, \alpha_n \} \) be a set of simple
roots of \( \g^\nu_0 \) with respect to
\eqref{eq: triangular decomposition for g0}
and \( \alpha_0 \) be the corresponding minimal root.
For any root \( \alpha \) we
write \( \dual{\alpha} \)
for the unique element in \( \h \)
such that
\( B(\dual{\alpha}, -) = \alpha(-) \).
The set
\begin{equation}\label{eq: simple root system nu}
  \Pi \coloneqq \{ \underbrace{\seq{\alpha_0, 1}}_{\eqqcolon \, \widetilde{\alpha}_0}, \underbrace{\seq{\alpha_1, 0}}_{\eqqcolon \, \widetilde{\alpha}_1}, \ldots,
  \underbrace{\seq{\alpha_n, 0}}_{\eqqcolon \, \widetilde{\alpha}_n} \}.
\end{equation}
is called
\emph{the simple root system}
of \( \Loop^\nu \). It satisfies
the following properties:
\begin{enumerate}
\item Any \( (\alpha,k) \in \Phi \) can be uniquely written
in the form
\( (\alpha, k) = \sum_{i=0}^n c_i \widetilde{\alpha}_i \), where \( c_i \in \ZZ \).
If the root \( (\alpha, k) \) is positive (negative), then
the coefficients \( c_i \) in its decomposition
are all non-negative (non-positive);
\item The matrix
\( A \coloneqq (a_{ij}) \), where
\begin{equation}\label{eq: Cartan matrix of Lnu}
  a_{ij} \coloneqq 2\frac{B(\dual{\alpha_i}, \dual{\alpha_j})}{B(\dual{\alpha_j} , \dual{\alpha_j})} \in \ZZ
  \qquad
  i,j \in \{ 0, 1, \ldots, n \},
\end{equation}
is a generalized Cartan matrix of affine type.
We call it \emph{the affine matrix associated to} \( \Loop^\nu \).
The Dynkin diagram corresponding to \( A \) is called \emph{the Dynkin diagram of} \( \Loop^\nu \).
\end{enumerate}
Let \( \Lambda_0 \coloneqq \{X^-_i, H^{~}_i, X^+_i \}_{i = 1}^n \) be the set of
standard Chevalley generators for \( \g^\nu_0 \) with respect
to the choice of simple roots we made earlier.
Take two elements
\( X_0^{\pm} \in \Loop_{(\pm \alpha_0, \pm 1)} \) such that
\begin{equation}
\left[ X_0^+, X_0^- \right]
=
\frac{\dual{\alpha_0}}{B(\dual{\alpha_0}, \dual{\alpha_0})}
\eqqcolon H^{~}_0.
\end{equation}
By
\cite[Lemma X.5.8]{helgason} the set
\( \Lambda \coloneqq \Lambda_0 \cup \{ X_0^-, H^{~}_0, X_0^+ \} \)
generates the whole Lie algebra \( \Loop^\nu \).
For any \( S \subsetneq \Pi \) we denote by
\( \mathfrak{S}^S \) the semi-simple subalgebra of
\( \Loop^\nu \) generated by 
\( \{X^-_i, H^{~}_i, X^+_i \}_{\widetilde{\alpha}_i \in S} \)
with the induced triangular decomposition
\( \mathfrak{S}^S = \N^S_- \add \h^S \add \N^S_+  \).
The subalgebras 
\( \mathfrak{p}^S_\pm \coloneqq \B_\pm \add \N^S_\mp \ \) are
the analogues for the parabolic subalgebras
in the theory of semi-simple Lie algebras.

\subsubsection{Classification of finite order automorphisms and regrading}
We now explain the regrading procedure that makes it possible
to transfer all the preceding results of this section to
\( \Loop^\sigma \)
for an arbitrary finite order automorphism \( \sigma \).
Let \( s = (s_0, s_1, \ldots, s_n) \) be a sequence
of non-negative integers with at least one non-zero element.
Using the properties of the simple
root system \eqref{eq: simple root system nu} we can write
\begin{equation}\label{eq:defa_i}
(0,|\nu|)
= |\nu|
\sum_{i=0}^{n} a_i \widetilde{\alpha}_i
\end{equation}
for some unique positive integers \( a_i \). 
We define a positive integer \( m \coloneqq |\nu| \sum_{i = 0}^n a_i s_i \).
The following results were proven in
\cite[Theorem X.5.15]{helgason}:
\begin{enumerate}
\item The set \( \{ X_j^+(1) \}_{j=0}^n \) generates the
Lie algebra \( \g \) and the relations
\begin{equation}
\sigma_{(s; |\nu|)}(X_j^+(1)) \coloneqq e^{2  \pi i s_j / m} X_j^+(1)
\qquad 0 \leq j \leq n
\end{equation}
define a unique automorphism
\( \sigma_{(s; |\nu|)} \) of \( \g \)
of order \( m \) such that
\( \Loop^{\nu} \cong \Loop^{\sigma_{(s; |\nu|)}} \).
In particular, \( \nu = \sigma_{((1,0,\ldots, 0); |\nu|)} \);
\item Up to conjugation any finite order automorphism \( \sigma \)
of \( \g \)
arise in this way.
\end{enumerate}
It follows immediately that
for any finite order automorphism \( \sigma \) of \( \g \)
there is an automorphism \( \sigma_{(s; |\nu|)} \) and
an outer automorphism \( \nu \) of \( \g \)
such that
\begin{equation}\label{eq: L sigma = L sigma(s,k) = L nu}
\Loop^\nu \xrightarrow[G^s]{\sim} \Loop^{\sigma_{(s; |\nu|)}} \xrightarrow[\phantom{G^s}]{\sim} \Loop^\sigma,
\end{equation}
where the second isomorphism, given by conjugation, is described in
Remark
\ref{rem: L^nu is determined by ord(nu)}.
The automorphism \( \sigma_{(s; |\nu|)} \) is called
\emph{the automorphism of type \( (s; |\nu|) \)}.
Note that the conjugacy class of the coset
\( \sigma_{(s; |\nu|)} \textnormal{Inn}_{\CC - \textnormal{LieAlg}}(\g) \) is
represented by \( \nu \).

Now we describe the first isomorphism in the chain
\eqref{eq: L sigma = L sigma(s,k) = L nu}.
Define \emph{the \( s \)-height
\( \hgt_s (\alpha, k) \)}
of a root \( (\alpha, k) \in \Phi \)
in the following way: decompose \( (\alpha, k) \)
with respect to the simple root system
\( \Pi \), i.e. 
\( (\alpha,k)
= 
\sum_{i=0}^n c_i \widetilde{\alpha}_i \) and set
\begin{equation}
\hgt_s (\alpha, k) \coloneqq \sum_{i=0}^n c_i s_i. \end{equation}
We introduce a new \( \ZZ \)-grading on \( \Loop^\nu \),
called
\emph{\( \ZZ \)-grading of type \( s \)},
by declaring \( \deg(f) = 0 \)
for \( f \in \h \) and
\( \deg(f) = \hgt_s(\alpha, k) \) for
\( f \in \Loop^\nu_{(\alpha, k)} \).
The isomorphism
\( G^s \colon \Loop^\nu \longrightarrow \Loop^{\sigma_{(s; |\nu|)}} \),
called \emph{regrading},
is given by
\begin{equation}\label{eq: regrading to s}
  G^s(z^k x) \coloneqq z^{\hgt_s(\alpha, k)} x \qquad
  \forall z^k x \in \Loop^\nu_{(\alpha, k)}.
\end{equation}
If \( \Loop^\nu \) is equipped with the grading of
type \( s \) and
\( \Loop^{\sigma_{(s; |\nu|)}} \)
is equipped with the natural grading given by
the powers of \( z \), then \( G^s \) is
a graded isomorphism.
We write \( G^{s'}_s \) 
for the resulting regrading
\( G^{s'} \circ (G^s)^{-1} \colon \Loop^{\sigma_{(s; |\nu|)}}
\longrightarrow \Loop^{\sigma_{(s'; |\nu|)}} \).
\begin{remark}
The grading given by
\( s = \mathbf{1} = (1, 1, \ldots, 1) \)
is called
\emph{the principle grading}
and the corresponding
automorphism \( \sigma_{(\mathbf{1}; |\nu|)} \) is
\emph{the Coxeter automorphism}
of the pair \( (\g, \nu) \).
\end{remark}

\subsubsection{Structure theory (general case)}
We finish the discussion of loop algebras by pushing the
structure theory for 
\( \Loop^\nu \) to \( \Loop^\sigma \)
through the chain of isomorphisms
\eqref{eq: L sigma = L sigma(s,k) = L nu}.
We do it gradually, starting with the case
\( \sigma = \sigma_{(s; |\nu|)} \), \( s = (s_0, s_1, \ldots, s_n)\).
Let \( \Phi \) and \( \Pi \), as before, be the set of all roots and the simple root system of \( \Loop^\nu \).
From the definition of regrading
it is clear that \( G^s(\h) = \h \).
This allows us to define the joint
eigenspaces
\( \g^{\sigma}_{(\alpha, \ell)} \),
\( \alpha \in \dual{\h}\), \( \ell \in \ZZ \),
using the exact same formula
\eqref{eq: eqigenspace g nu alpha}
and call \( (\alpha, \ell) \) a root
of \( \Loop^{\sigma} \)
if \( \g^{\sigma}_{(\alpha, \ell)} \neq 0 \).
Using regrading we can describe the root spaces
\( \Loop^\sigma_{(\alpha,\ell)} \coloneqq z^{\ell}\g^{\sigma}_{(\alpha, \ell)}\)
of \( \Loop^\sigma \) in terms of the root spaces
of \( \Loop^\nu \), namely
\begin{equation}\label{eq: regraded root space}
    G^s \seq{\Loop^\nu_{(\alpha, k)}}
    =
    \Loop^\sigma_{(\alpha, \hgt_s(\alpha,k))}
    \qquad
    \forall (\alpha, k) \in \Phi.
\end{equation}
This gives a
bijection between roots of
\( \Loop^\nu \) and \( \Loop^\sigma \).
More precisely, let \( \Phi_\sigma \) be the set of all roots of \( \Loop^\sigma \), then
\begin{equation}
  \Phi_\sigma = \set{(\alpha, \hgt_s(\alpha, k)) \mid (\alpha, k) \in \Phi}.
\end{equation}
The subset \( \Pi^\sigma \coloneqq \{ (\alpha_0, s_0), (\alpha_1, s_1), \ldots, (\alpha_n, s_n) \} \subseteq \Phi_\sigma \) is said to be the simple root system of \( \Loop^\sigma \).
We again adopt the notation \( \widetilde{\alpha_i} \) for the simple root \( (\alpha_i, s_i) \).
By definition of
\( \hgt_s \) the root spaces
\( \Loop^\sigma_{\seq{\alpha, \ell_1}} \) and
\( \Loop^\sigma_{\seq{\beta, \ell_2}} \) 
are paired by the form
\( B \) non-degenratly if 
\(  (\alpha, \ell_1) + (\beta, \ell_2) = (0,0) \); otherwise
\( B(\Loop^\sigma_{\seq{\alpha, \ell_1}}, \Loop^\sigma_{\seq{\beta, \ell_2}}) = 0 \).
It is evident from \eqref{eq: regraded root space} that the subspaces
\( \mathfrak{N}_\pm, \B_\pm \subseteq \Loop^\nu \) are
fixed under regrading and thus we can
unambiguously use the same notations for them
considered as subspaces of \( \Loop^\sigma \).
Applying regrading to the set of generators \( \Lambda = \{ X_i^-, H^{~}_i, X_i^+ \}_{i=0}^n \) of \( \Loop^\nu \) we obtain the set
\begin{equation}\label{eq: generators of L sigma_sk}
  \Lambda^\sigma \coloneqq
\set{z^{-s_i}X^-_i(1), H^{~}_i, z^{s_i}X^+_i(1)}_{i=0}^{n}
\end{equation}
of generators of \( \Loop^\sigma \).
When \( S \subsetneq \Pi^\sigma \) we use the same notation
\( \mathfrak{S}^S \) to denote the semi-simple subalgebra of \( \Loop^\sigma \) generated by
\( \{ z^{-s_i}X^-_i(1), H^{~}_i, z^{s_i}X^+_i(1) \}_{\widetilde{\alpha}_i \in S}\)
with the induced triangular decomposition
\( \mathfrak{S}^S = \N^S_- \add \h^S \add \N^S_+  \).
The corresponding parabolic subalgebras of \( \Loop^\sigma \) are defined using the same formulas, namely
\( \p^S_\pm \coloneqq \B_\pm \add \mathfrak{N}^S_\mp \).
We also define
\begin{equation}
    \n^\sigma_\pm :=
    \bigoplus_{(\alpha, 0) \in \Phi_\sigma^\pm}
    \Loop^\sigma_{(\alpha, 0)}
    =
    \g^\sigma_0 \cap \mathfrak{N}_\pm.
\end{equation}
This gives the triangular decomposition
\( \g^\sigma_0 = \n^\sigma_- \add \h \add \n^\sigma_+ \).

Finally, we consider the case
\( \sigma = \rho \sigma_{(s; |\nu|)} \rho^{-1}  \) for some 
\( \rho \in \textnormal{Aut}_{\CC - \textnormal{LieAlg}}(\g) \).
We denote the natural isomorphism
\begin{equation}
\Loop^{\sigma_{(s; |\nu|)}} \longrightarrow
\Loop^\sigma,
\qquad
z^k x \longmapsto z^k \rho(x)
\end{equation}
with the same letter \( \rho \).
The roots of \( \Loop^\sigma \)
with respect to the action of the
Cartan subalgebra
\( \rho(\h) \) are of the form
\((\alpha \rho^{-1}, \ell)\),
where \( (\alpha, \ell) \) is
a root of
\( \Loop^{\sigma_{(s; |\nu|)}} \),
and the root spaces
are described by
\begin{equation}\label{eq: conjugated root space}
    \Loop^\sigma_{(\alpha \rho^{-1}, \ell)}
    =
    \rho\seq{\Loop^{\sigma_{(s; |\nu|)}}_{(\alpha, \ell)}}.
\end{equation}
The set of all roots is again denoted by \( \Phi_\sigma \),
and its subset
\begin{equation}
\Pi^\sigma \coloneqq \{ (\alpha_0 \rho^{-1}, s_0), (\alpha_1 \rho^{-1}, s_1), \ldots, (\alpha_n \rho^{-1}, s_n)\}
\end{equation}
is called the simple root system of \( \Loop^\sigma \).
Applying \( \rho \) to the generators 
\eqref{eq: generators of L sigma_sk} of \( \Loop^{\sigma_{(s; |\nu|)}} \) we get the set
\begin{equation}\label{eq: generators of L sigma}
    \Lambda^\sigma \coloneqq 
    \set{z^{-s_i}\rho(X_i^-(1)), \rho(H_i), z^{s_i}\rho(X_i^+(1))}_{i=0}^n
\end{equation}
of generators of \( \Loop^\sigma \).
Later, when there is no ambiguity,
the same notations \( X_i^\pm \) and \( H_i \)
are used to denote the elements of generating sets 
\eqref{eq: generators of L sigma} and 
\eqref{eq: generators of L sigma_sk}.
Combining \eqref{eq: conjugated root space} with \eqref{eq: regraded root space} we define
\begin{equation}
\begin{split}
    \n^\sigma_\pm
    &\coloneqq
    \rho(\n^{\sigma_{(s; |\nu|)}}_\pm) =
    \bigoplus_{(\alpha, 0) \in \Phi^\pm}
    \Loop^\sigma_{(\alpha \rho^{-1}, \hgt_s(\alpha, k))}, \\
    \mathfrak{N}^{\sigma}_\pm
    &\coloneqq
    \rho(\mathfrak{N}_\pm)
    =
    \bigoplus_{(\alpha, k) \in \Phi^\pm}
    \Loop^\sigma_{(\alpha \rho^{-1}, \hgt_s(\alpha, k))}, \\
    \B^\sigma_\pm
    &\coloneqq
    \rho(\B_\pm)
    =
    \mathfrak{N}^{\sigma}_\pm \add \rho(\h).
\end{split}
\end{equation}
where \( \Phi \), as before,
is the set of all roots of \( \Loop^\nu \).
Note that this notation
is in consistence with
the one defined earlier.

\begin{remark}
  Let \( \sigma = \rho \sigma_{(s; |\nu|)} \rho^{-1} \) and \( A \) be the
  affine matrix associated to \( \Loop^\sigma \),
  defined in a way similar to \eqref{eq: Cartan matrix of Lnu}. Then \(A\) coincides with the affine Cartan matrix of \(\Loop^\nu\) and
  so does the Dynkin diagram of \( \Loop^\sigma \).
\end{remark}

\subsubsection{Connection to Kac-Moody algebras}\label{rem: g(A) as L + Cc + Cd}
As the structure theory developed in the preceding subsections suggests, the notion of a loop algebra is closely related to the notion
of an affine Kac-Moody algebra. More precisely, let
\( A \) be an affine matrix of type
\( X_N^{(m)} \), \( \g \) be the simple finite-dimensional
Lie algebra of type \( X_N \) and \( \nu \) be an
automorphism of \( \g \) induced by an automorphism
of the corresponding Dynkin diagram with
\( |\nu| = m \).
Then \(A\) is the Cartan matrix of \(\Loop^\nu\) and the affine Kac-Moody algebra
\( \mathfrak{K}(A) \) is isomorphic to 
\begin{equation}
  \Loop^\nu \add \CC c \add \CC d,
\end{equation}
where \( \CC c \) is the one-dimensional center of
\( \mathfrak{K}(A) \), \( d \) is the additional derivation element
that acts on \( \Loop^\nu \) as \( z \frac{d}{dz} \) and
the Lie bracket is described by
\begin{equation}
[ z^k x, z^{\ell} y ] = z^{k+\ell}[x,y] + kB(z^k x, z^{\ell} y)c
\qquad \forall z^k x, z^\ell y \in \Loop.
\end{equation}
Consequently \( \Loop^\nu \cong [\mathfrak{K}(A), \mathfrak{K}(A)]/\CC c \) and the form 
\eqref{eq: form on L sigma}
on \( \Loop^\nu \)
extends to a
standard bilinear form on 
\( \mathfrak{K}(A) \) in the sense of \cite[Section 2]{kac_book}.

\section{The standard Lie bialgebra structure on \( \Loop^\sigma \) and its twists}\label{sec: standard bialgebra}
Let \( \mathfrak{K}(A) \) be a
symmetrizable Kac-Moody algebra
with a fixed invariant non-degenerate symmetric
bilinear form \( B \). Then it possesses a Lie bialgebra structure \( \delta_0 \), called the standard Lie bialgebra structure on \( \mathfrak{K}(A) \),
given by
\begin{equation}\label{eq: standard bialgebra on g(A)}
\delta_0(H_i) = 0,
\qquad
\delta_0(D_i) = 0,
\qquad
\delta_0(X_i^\pm) = \frac{B(\dual{\alpha_i}, \dual{\alpha_i})}{2} H_i \wedge X_i^\pm,
\end{equation}
where
\( \{X_i^-, H_i, X_i^+ \} \cup \{ D_i \} \)
is a set of standard generators for
\( \mathfrak{K}(A) \) (see \cite[Example 3.2]{drinfeld_quantum_groups} and \cite[Example 1.3.8]{chari_pressley}).
We can immediately see that \( \delta_0 \)
induces a Lie bialgebra structure on
\begin{equation} 
[\mathfrak{K}(A), \mathfrak{K}(A)]/ Z(\mathfrak{K}(A)),
\end{equation}
where \( Z(\mathfrak{K}(A)) \) is the center of \( \mathfrak{K}(A) \).
In particular, when \( A \) is an affine matrix
and \( B \) is the form mentioned in
Subsection \ref{rem: g(A) as L + Cc + Cd}
we get a Lie bialgebra structure \( \delta_0^\nu \) on
\( \Loop^\nu \).
Applying the methods described in Section
\ref{sec: Loop algebras}
we induce a Lie bialgebra structure
\( \delta_0^\sigma \),
called
\emph{the standard Lie bialgebra structure},
on \( \Loop^\sigma \) for any finite order
automorphism \( \sigma \).
Its twisted versions
\( \delta_t^\sigma \)
are called 
\emph{twisted standard structures}.

\subsection{Pseudoquasitriangular structure}\label{sec: pseudoquasitriangular structure}
We want to prove that \( \delta_t^\sigma \)
is a pseudoquasitriangular Lie bialgebra structure, i.e.
it is defined by an \rmatr.
We restrict our attention to a special case
\( \sigma = \sigma_{(s; |\nu|)} \).
The general result will then follow from
the natural isomorphism mentioned in
Remark \ref{rem: L^nu is determined by ord(nu)}.

Let \( C_k^\sigma \) be the projection
of the Casimir element
\( C = \sum_{k = 0}^{|\sigma| - 1} C_k^\sigma
\in \g \ot \g \)
on the eigenspace
\( \g^\sigma_{k} \ot \g^\sigma_{-k} \).
The triangular decomposition
\( \g_0^\sigma = \n^\sigma_- \add \h \add \n^\sigma_+ \) leads to the splitting
\( C_0^\sigma =
C_{-}^{\sigma} + C_{\h} + C_{+}^{\sigma} \),
where \( C_\pm^\sigma \in \n^\sigma_\pm \ot \n^\sigma_\mp \)
and \( C_\h \in \h \ot \h \).
We introduce a rational function
\( r_0^\sigma \colon \CC^2 \longrightarrow \g \ot \g \)
defined by
\begin{equation}\label{eq: r 0 sigma}
r_0^\sigma(x,y) \coloneqq
\frac{C_\h}{2} + C_-^\sigma +
\frac{1}{(x/y)^{|\sigma|} - 1} \sum_{k = 0}^{|\sigma|-1} 
\seq{\frac{x}{y}}^k C_k^\sigma.
\end{equation}

\begin{remark}
Formula \eqref{eq: r 0 sigma}
can be seen as a generalization of
well-known \rmatrs.
P. Kulish introduced \( r^{\sigma_{(\mathbf{1};1)}}_0 \) in \cite{kulish}. More generally \( r^{\sigma_{(\mathbf{1};|\nu|)}}_0 \) was introduced in \cite{belavin_drinfeld_solutions_of_CYBE_paper} by A. Belavin and V.Drinfeld, which they later, in \cite{belavin_dirnfeld_triangle}, called the simplest trigonometric solution. M. Jimbo used
\( r^{\nu}_0 \) in \cite{jimbo} and the formula for
\(r_0^{\id}\) appears in the recent works
\cite{KPSST, pop_stolin} and \cite{ burban_galinat_stolin} under the
name ``quasi-trigonometric \rmatr''.
\end{remark}

The statement in
\cite[Lemma 6.22]{belavin_drinfeld_solutions_of_CYBE_paper} suggests
the following holomorphic relations between functions defined by \eqref{eq: r 0 sigma}.

\begin{lemma}\label{thm: GxG r = r}
Let \( \sigma, \sigma' \in \textnormal{Aut}_{\CC - \textnormal{LieAlg}}(\g) \) be
two automorphisms of types
\( (s; |\nu|) \) and \( (s'; |\nu|) \)
respectively, where \( s = (s_0, s_1, \ldots, s_n) \) and
\( s' = (s_0', s_1', \ldots, s_n') \). Then
\begin{enumerate}
\item The equations
\( \alpha_i(\mu) = s'_i / |\sigma'| - s_i/|\sigma|\),
\( i \in \{0,1, \ldots, n\} \), 
define a unique element \( \mu \in \h \) such that
\begin{equation}\label{eq: Gs = exp}
e^{u \, \ad(\mu)} f\seq{e^{u / |\sigma|}}
=
\seq{G_s^{s'} f}\seq{e^{u / |\sigma'|}}
\qquad
\forall f \in \Loop^\sigma, \ \forall u \in \CC;
\end{equation}\label{eq: Gs ot Gs r = r'}
\item The functions \( r^\sigma_0 \)
and \( r^{\sigma'}_0 \) satisfy the relation
\begin{equation}\label{eq: (e ot e)r = r}
  \seq{e^{u \, \ad(\mu)} \ot e^{v \, \ad(\mu)} }
  r^\sigma_0 \seq{e^{u/|\sigma|}, e^{v/ |\sigma|}} = 
  r^{\sigma'}_0 \seq{e^{u/ |\sigma'|}, e^{v / |\sigma'|}}
  \qquad
  \forall u, v \in \CC, u - v \notin 2\pi i \ZZ.
\end{equation}
\end{enumerate}
\end{lemma}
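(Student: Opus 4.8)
The plan is to verify both statements by reducing them to scalar identities on the weight spaces of $\g$, using two elementary facts: $e^{u\ad(\mu)}$ multiplies an $\h$-weight vector of weight $\alpha$ by $e^{u\alpha(\mu)}$, and the regrading \eqref{eq: regrading to s} rescales the spectral parameter according to the $s$-height.

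For part (1) I would first settle existence and uniqueness of $\mu$. Since $\alpha_1,\dots,\alpha_n$ form a basis of $\dual{\h}$, the $n$ equations $\alpha_i(\mu)=s'_i/|\sigma'|-s_i/|\sigma|$ with $i\ge 1$ already determine a unique $\mu\in\h$; the remaining equation $i=0$ is then automatic. Indeed, the second coordinate of \eqref{eq:defa_i} gives $a_0=1$ and its first coordinate gives $\alpha_0=-\sum_{i\ge 1}a_i\alpha_i$, so $\alpha_0(\mu)=-\sum_{i\ge1}a_i(s'_i/|\sigma'|-s_i/|\sigma|)$, which equals $s'_0/|\sigma'|-s_0/|\sigma|$ after substituting $\sum_{i=0}^n a_i s_i=|\sigma|/|\nu|$ and $\sum_{i=0}^n a_i s'_i=|\sigma'|/|\nu|$. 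With $\mu$ in hand, it suffices to check \eqref{eq: Gs = exp} on a homogeneous $f=z^\ell x\in\Loop^\sigma_{(\alpha,\ell)}$ with $x\in\g^\nu_{(\alpha,j)}$ and $\ell=\hgt_s(\alpha,j)$. The left-hand side evaluates to $e^{u\alpha(\mu)}e^{u\ell/|\sigma|}x$, while $(G^s)^{-1}(z^\ell x)=z^j x$ and $G^{s'}(z^j x)=z^{\hgt_{s'}(\alpha,j)}x$ give $e^{u\hgt_{s'}(\alpha,j)/|\sigma'|}x$ on the right. The identity is thus the relation $\alpha(\mu)=\hgt_{s'}(\alpha,j)/|\sigma'|-\hgt_s(\alpha,j)/|\sigma|$, which follows by expanding $\alpha=\sum_i c_i\alpha_i$ and applying the definition of $\mu$ linearly; this is manifestly independent of the chosen lift $j$, since replacing $j$ by $j+|\nu|$ shifts $\hgt_s$ by $|\sigma|$ and $\hgt_{s'}$ by $|\sigma'|$.

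For part (2) the conceptual point is that the substitution $x=e^{u/|\sigma|}$, $y=e^{v/|\sigma|}$ makes $(x/y)^{|\sigma|}=e^{w}$ with $w:=u-v$, so both the $\sigma$- and $\sigma'$-expressions are governed by the single quantity $e^{w}-1$. I would decompose the Casimir as $C=\sum_{(\alpha,\bar k)}C_{(\alpha,\bar k)}$ along the $\h$-weight/$\nu$-grading decomposition of $\g$, with $C_{(\alpha,\bar k)}\in\g^\nu_{(\alpha,\bar k)}\otimes\g^\nu_{(-\alpha,-\bar k)}$. Reading off the regrading, $\sigma=\sigma_{(s;|\nu|)}$ acts on $\g^\nu_{(\alpha,\bar k)}$ by $e^{2\pi i\hgt_s(\alpha,j)/|\sigma|}$, so $C_k^\sigma$ is the sum of those $C_{(\alpha,\bar k)}$ with $\hgt_s(\alpha,j)\equiv k\pmod{|\sigma|}$, and the geometric-series part of \eqref{eq: r 0 sigma} contributes to $C_{(\alpha,\bar k)}$ the coefficient $(e^{w}-1)^{-1}e^{wr/|\sigma|}$, where $r=\hgt_s(\alpha,j)\bmod|\sigma|$. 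Applying $e^{u\ad(\mu)}\ot e^{v\ad(\mu)}$ scales this component by $e^{w\alpha(\mu)}$, and part (1) rewrites the exponent $r/|\sigma|+\alpha(\mu)$ as $\hgt_{s'}(\alpha,j)/|\sigma'|$ up to an integer. Since $C_0^\sigma=C_-^\sigma+C_\h+C_+^\sigma$, the total coefficients inside $\g_0^\sigma\ot\g_0^\sigma$ are $\tfrac{1}{e^{w}-1}$ on $C_+^\sigma$, $\tfrac{e^{w}}{e^{w}-1}$ on $C_-^\sigma$, and $\tfrac12\tfrac{e^{w}+1}{e^{w}-1}$ on $C_\h$; I would match the transformed $\sigma$-coefficients against the corresponding $\sigma'$-coefficients componentwise.

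The main obstacle, and the place where the summands $C_\h/2$ and $C_-^\sigma$ in \eqref{eq: r 0 sigma} earn their keep, is exactly this integer discrepancy: the residues $r=\hgt_s(\alpha,j)\bmod|\sigma|$ and $r'=\hgt_{s'}(\alpha,j)\bmod|\sigma'|$ satisfy $r/|\sigma|+\alpha(\mu)=r'/|\sigma'|$ only up to an integer, and a root space which is at height $\equiv 0$ for one grading but nonzero for the other migrates between the ``geometric'' part and the triangular correction $C_\pm$ under regrading. I expect to show that for each component the shift $e^{w\cdot(\mathrm{integer})}/(e^{w}-1)$ differs from the target $e^{wr'/|\sigma'|}/(e^{w}-1)$ by precisely the amount supplied by the $C_\pm$ corrections (via the identity $\tfrac{e^{w}}{e^{w}-1}-\tfrac{1}{e^{w}-1}=1$) together with the $\coth$-type reorganization of $C_\h$, the hypothesis $u-v\notin 2\pi i\ZZ$ ensuring we avoid the pole $e^{w}=1$. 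Carrying out this matching term by term yields the stated equality; the smallest instance, $\g=\mathfrak{sl}(2,\CC)$ with $\sigma$ the Coxeter automorphism and $\sigma'=\id$, already exhibits the full mechanism and serves as a useful sanity check.
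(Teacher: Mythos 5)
Part (1) of your proposal is correct and essentially the paper's argument: the paper also derives consistency of the $i=0$ equation from $|\sigma|=|\nu|\sum a_is_i$, $|\sigma'|=|\nu|\sum a_is_i'$, and then verifies \eqref{eq: Gs = exp} by a direct power-series computation --- on the Chevalley generators $X_i^\pm$ rather than on general root vectors, which suffices since both sides are compatible with brackets; your verification on all of $\Loop^\sigma_{(\alpha,\ell)}$ is a harmless strengthening of the same computation. Part (2) is where you genuinely diverge. The paper does \emph{not} manipulate the closed-form rational expression \eqref{eq: r 0 sigma} at all: it expands $r_0^\sigma$ as the Taylor series \eqref{eq: tensor expr to r_0 sigma} indexed by the positive roots of $\Loop^\sigma$, $r_0^\sigma(x,y)=\tfrac{C_\h}{2}+\sum_{(\alpha,k)\in\Phi_\sigma^+}\sum_i b^i_{(-\alpha,-k)}(x)\ot b^i_{(\alpha,k)}(y)$, valid for $|y|<|x|$, applies $e^{u\ad(\mu)}\ot e^{v\ad(\mu)}$ termwise using part (1), observes that the result is literally the corresponding series for $r_0^{\sigma'}$ (because regrading preserves positivity of roots), and finishes with the identity theorem. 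This packaging makes the case analysis you describe disappear entirely. Your componentwise matching of coefficients on the Casimir decomposition is a viable alternative, and your diagnosis of the obstacle is exactly right: a weight $\alpha$ at $s$-height $\equiv 0$ but $s'$-height $\not\equiv 0$ migrates between the geometric-series part and the corrections $C_\pm^\sigma$, $C_\h/2$. The matching does close up, because the ``effective exponent'' of each component is $\hgt_s(\alpha,j_0)/|\sigma|$ where $(\alpha,j_0)$ is the minimal \emph{positive} root of $\Loop^\nu$ over $\alpha$, and then part (1) gives $\hgt_s(\alpha,j_0)/|\sigma|+\alpha(\mu)=\hgt_{s'}(\alpha,j_0)/|\sigma'|$ exactly, with no leftover integer --- the shift $\tfrac{e^{w}}{e^{w}-1}-\tfrac{1}{e^{w}-1}=1$ accounting precisely for the $C_-$ correction when a negative root sits at height $0$. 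However, as written your proposal stops at ``I expect to show\dots'' precisely at this decisive verification, so to be complete you would either have to carry out that bookkeeping (including the $C_\h$ term, which is weight zero and hence untouched by $e^{u\ad(\mu)}\ot e^{v\ad(\mu)}$, so it must match on the nose --- it does, since both sides contribute $\tfrac12+\tfrac{1}{e^{w}-1}$ on $\h\ot\h$) or switch to the paper's series argument, which obtains the same conclusion with no cases.
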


\begin{proof}
Using the formulas
\begin{equation}
|\sigma|= |\nu| \sum_{i = 0}^n a_i s_i
\
\text{ and }
\
|\sigma'| = |\nu| \sum_{i = 0}^n a_i s'_i,
\end{equation}
we can easily
deduce that the equations
\( \alpha_i(\mu) = s'_i / |\sigma'| - s_i/|\sigma| \)
are consistent and define a unique element
\( \mu \in \h \).
Let \( f = X_i^\pm \), \( i \in \{ 0, 1, \ldots, n \} \). Then for all \( u \in \CC \) we have
\begin{equation}\label{eq: number 2}
\begin{split}
e^{u \, \ad(\mu)} X_i^\pm \seq{e^{u / |\sigma|}}
&=
e^{\pm u s_i / |\sigma|}
\sum_{k \geq 0} \frac{u^k}{k!} 
\seq{\pm \frac{s_i'}{|\sigma'|} \mp \frac{s_i}{|\sigma|}}^k
X_i^\pm(1) \\
&=
e^{\pm u s_i'/|\sigma'|} X_i^\pm(1) 
=
\seq{G_s^{s'} X_i^{\pm}}\seq{e^{u / |\sigma'|}}.
\end{split}
\end{equation}
Since \( \Loop^\sigma \) is generated by \( X_i^\pm \), identity \eqref{eq: number 2}
proves the first statement.
To verify the second statement we choose a basis
\( \{ b_{(\alpha,k)}^i \} \)
for each \( \Loop^\sigma_{(\alpha,k)} \)
such that
\begin{equation}
B\seq{b^i_{(\alpha, k)}, b^j_{(-\alpha,-k)}} = \delta_{ij}
\qquad
\forall (\alpha, k) \in \Phi_\sigma.
\end{equation}
Setting
\( n_{(\alpha, k)} \coloneqq \dim(\Loop^\sigma_{(\alpha, k)}) \)
we can write
\begin{equation}
  \seq{\frac{y}{x}}^{-k + \ell |\sigma|} C^\sigma_k
  =
  \sum_{\substack{(\alpha, k) \in \Phi_{\sigma}^+ \\ 1 \leq i \leq n_{(\alpha, k)}}}
  b^i_{(-\alpha, k - \ell |\sigma|)}(x) \ot b^i_{(\alpha, -k + \ell |\sigma|)}(y)
  \qquad
  \forall x,y \in \CC^*,
\end{equation}
where \( \Phi_{\sigma}^+ \) stands for the set of positive roots
of \( \Loop^\sigma \).
Then the Taylor series of \( r_0^\sigma \)
in \( y = 0 \) for a fixed \( x \) is
\begin{equation}\label{eq: tensor expr to r_0 sigma}
r_0^\sigma(x,y)
=
\frac{C_\h}{2} + 
\sum_{\substack{(\alpha, k) \in \Phi_{\sigma}^+ \\ 1 \leq i \leq n_{(\alpha, k)}}}
  b^i_{(-\alpha, -k)}(x) \ot b^i_{(\alpha, k)}(y).
\end{equation}
It converges absolutely in \( |y| < |x| \) allowing us to perform the 
following calculation
\begin{align*}
\seq{e^{u \, \ad(\mu)} \ot e^{v \, \ad(\mu)} }
r^\sigma_0 \seq{e^{u/|\sigma|}, e^{v/ |\sigma|}}
&= 
\frac{C_\h}{2} + 
\sum_{\substack{(\alpha, k) \in \Phi_{\sigma}^+ \\ 1 \leq i \leq n_{(\alpha, k)}}}
  e^{u \, \ad(\mu)} b^i_{(-\alpha, -k)}\seq{e^{u/|\sigma|}}
  \ot
  e^{v \, \ad(\mu)} b^i_{(\alpha, k)}\seq{e^{v/|\sigma|}} \\
&=
\frac{C_\h}{2} + 
\sum_{\substack{(\alpha, k) \in \Phi_{\sigma}^+ \\ 1 \leq i \leq n_{(\alpha, k)}}}
  \seq{G_s^{s'} b^i_{(-\alpha, -k)}}\seq{e^{u/|\sigma'|}}
  \ot
  \seq{G_s^{s'} b^i_{(\alpha, k)}}\seq{e^{v/|\sigma'|}} \\
&= 
r_0^{\sigma'} \seq{e^{u / |\sigma'|}, e^{v / |\sigma'|}},
\end{align*}
for \( |e^{v/|\sigma|}| < |e^{u/|\sigma|}| \) or, equivalently,
\( |e^{v/|\sigma'|}| < |e^{u/|\sigma'|}| \).
Equality
\eqref{eq: (e ot e)r = r} now
follows by the
identity theorem for holomorphic
functions of several variables (see \cite{gunning_rossi_1965}).
\end{proof}

Having this result at hand we can obtain
the desired pseudoquasitriangularity
for twisted standard structures \( \delta_t^\sigma \). Let us call a meromorphic function
\( r : \CC^2 \longrightarrow \g \ot \g \)
\emph{skew-symmetric}
if \( r(x,y) + \tau(r(y,x)) = 0 \).
\begin{theorem}\label{thm: delta = [, r]}
Let
\( \sigma \in \textnormal{Aut}_{\CC - \textnormal{LieAlg}}(\g) \)
be a finite order automorphism and \(t \in \Loop^\sigma \ot \Loop^\sigma\). 
Then
\( r_t^\sigma \coloneqq r_0^\sigma + t\) is a skew-symmetric solution of the CYBE if and only if \(t\) is a classical twist of \(\delta_0^\sigma\).
Moreover, if \( t \) is a classical twist of
\( \delta_0^\sigma \), then the
following relation holds:
\footnote{We define \((f \ot g)(x,y) \coloneqq f(x) \ot g(y)\) for any \(x,y \in \CC^*\) and \(f,g \in \Loop^\sigma\).}
\begin{equation}\label{eq: delta = [, r]}
\delta_t^\sigma(f)(x,y)
=
[f(x) \ot 1 + 1 \ot f(y), r_t^\sigma(x,y)]
\qquad
\forall f \in \Loop^\sigma, \ \forall x,y \in \CC^*.
\end{equation}
\end{theorem}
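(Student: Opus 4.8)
The whole statement reduces to the single untwisted identity
\begin{equation}
\delta_0^\sigma(f)(x,y) = [f(x)\ot 1 + 1 \ot f(y),\, r_0^\sigma(x,y)], \qquad \forall f \in \Loop^\sigma.
\tag{$\ast$}
\end{equation}
My plan is to prove $(\ast)$ first and then deduce both the "moreover" formula and the equivalence by purely formal manipulations. By Remark~\ref{rem: L^nu is determined by ord(nu)} together with the identity $r_0^{\rho\sigma\rho^{-1}} = (\rho\ot\rho)\,r_0^\sigma$ (a consequence of the automorphism-invariance of the Casimir and uniqueness of the graded decomposition $C=\sum_k C_k^\sigma$), I may assume $\sigma=\sigma_{(s;|\nu|)}$ throughout, exactly as announced in the preamble of this subsection.

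To establish $(\ast)$ I would note that both sides are $1$-cocycles of $\Loop^\sigma$ with values in $\Loop^\sigma\ot\Loop^\sigma$: the left-hand side because $\delta_0^\sigma$ is a cobracket, and the right-hand side because $f\mapsto f\cdot r_0^\sigma = [f\ot 1 + 1\ot f,\, r_0^\sigma]$ is the coboundary of the (meromorphic) element $r_0^\sigma$; here the diagonal pole of $r_0^\sigma$ at $x=y$ is cancelled by the bracket since its residue is the $\g$-invariant Casimir, so the right-hand side is regular and indeed lands in $\Loop^\sigma\ot\Loop^\sigma$. Since a cocycle vanishing on a generating set vanishes identically, it suffices to check $(\ast)$ on the generators \eqref{eq: generators of L sigma_sk}. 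For $f=H_i\in\h$ both sides are zero: $\delta_0^\sigma(H_i)=0$ by \eqref{eq: standard bialgebra on g(A)}, while $r_0^\sigma$ is $\h$-invariant because each $\g_k^\sigma$ is $\h$-stable, so the relevant projections of the invariant Casimir are $\h$-invariant. For $f=X_i^\pm$ I would substitute the absolutely convergent tensor expansion \eqref{eq: tensor expr to r_0 sigma}, compute the bracket leg by leg using the commutation relations and the defining property of the dual bases $\{b^i_{(\alpha,k)}\}$, and match the outcome with $\delta_0^\sigma(X_i^\pm)=\tfrac12 B(\dual{\alpha_i},\dual{\alpha_i})\,H_i\wedge X_i^\pm$. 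This generator computation is the technical core of the argument.

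Granting $(\ast)$, the "moreover" statement is immediate and does not require $t$ to be a twist: for any $t\in\Loop^\sigma\ot\Loop^\sigma$ the map $dt(f)=f\cdot t$ evaluates, since point evaluation $\Loop^\sigma\to\g$ is a Lie algebra homomorphism, to $dt(f)(x,y)=[f(x)\ot 1+1\ot f(y),\,t(x,y)]$. Adding this to $(\ast)$ and using $\delta_t^\sigma=\delta_0^\sigma+dt$ yields precisely \eqref{eq: delta = [, r]} with $r_t^\sigma=r_0^\sigma+t$.

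For the equivalence I would separate skew-symmetry from the CYBE. Skew-symmetry of $r_0^\sigma$, i.e. $\tau r_0^\sigma(y,x)=-r_0^\sigma(x,y)$, follows by a direct computation from $\tau C_\h=C_\h$, $\tau C_k^\sigma=C_{-k}^\sigma$ and $\tau C_\pm^\sigma=C_\mp^\sigma$ (or, uniformly, from the principal-grading case via Lemma~\ref{thm: GxG r = r}); hence $r_t^\sigma$ is skew-symmetric if and only if $t$ is. That $r_0^\sigma$ solves the CYBE is the statement that it is the simplest trigonometric solution of Belavin–Drinfeld, which I would obtain by reducing via Lemma~\ref{thm: GxG r = r} to the principal grading $s=\mathbf 1$, both the CYBE and skew-symmetry being preserved under the holomorphic gauge transformation \eqref{eq: (e ot e)r = r}. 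Expanding $\textnormal{CYB}(r_0^\sigma+t)=\textnormal{CYB}(r_0^\sigma)+\textnormal{CYB}(t)$ plus the six cross terms linear in $r_0^\sigma$ and in $t$, and rewriting the cross terms through $(\ast)$ (using the skew-symmetry of $t$) as $-\textnormal{Alt}((\delta_0^\sigma\ot 1)t)$, I arrive at $\textnormal{CYB}(r_t^\sigma)=\textnormal{CYB}(t)-\textnormal{Alt}((\delta_0^\sigma\ot 1)t)$. Thus, for skew-symmetric $t$, one has $\textnormal{CYB}(r_t^\sigma)=0$ exactly when $\textnormal{CYB}(t)=\textnormal{Alt}((\delta_0^\sigma\ot 1)t)$, i.e. exactly when $t$ is a classical twist of $\delta_0^\sigma$. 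I expect the main obstacle to lie in this last step: the cross terms and the $\textnormal{Alt}$ expression involve three independent spectral parameters, so matching them demands careful bookkeeping of the evaluation of the meromorphic kernel $r_0^\sigma$ on each pair of legs, even though every individual manipulation is routine.
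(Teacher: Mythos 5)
Your proposal is correct and its overall architecture coincides with the paper's: reduce to $\sigma=\sigma_{(s;|\nu|)}$ by conjugation, establish the $t=0$ identity first, transport it across gradings via Lemma~\ref{thm: GxG r = r}, and then obtain the equivalence from the expansion $\textnormal{CYB}(r^\sigma_t)=\textnormal{CYB}(r^\sigma_0)+\textnormal{CYB}(t)-\textnormal{Alt}((\delta_0^\sigma\ot 1)t)$ together with the observation that skew-symmetry of $t$ and of $r^\sigma_t$ are equivalent. The one genuine divergence is how the base identity $(\ast)$ is established: the paper handles the case $\sigma=\sigma_{(\mathbf{1};|\nu|)}$, $t=0$ entirely by citation, comparing \cite[Equations (6.4) and (6.5)]{belavin_drinfeld_solutions_of_CYBE_paper} with the projections of the defining relations \eqref{eq: standard bialgebra on g(A)}, whereas you propose a self-contained argument — both sides of $(\ast)$ are $1$-cocycles (the right-hand side landing in $\Loop^\sigma\ot\Loop^\sigma$ because the diagonal residue of $r_0^\sigma$ is the invariant Casimir), the zero set of a $1$-cocycle is a subalgebra, so it suffices to check the generators $H_i$, $X_i^\pm$. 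That argument is sound and has the advantage of working for arbitrary type $s$ directly and of making the paper less dependent on external equation numbers; its cost is that the generator computation for $X_i^\pm$, which you correctly identify as the technical core, is left unexecuted in your sketch, while the paper gets it for free from Belavin--Drinfeld. Note also that for the CYBE of $r_0^\sigma$ you still fall back on the same citation-plus-regrading route as the paper, so the two proofs are not independent at that point.
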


\begin{proof}
First, assume that \( \sigma = \sigma_{(\mathbf{1}; |\nu|)} \) and \( t = 0 \). In this case \cite[Proposition 6.1]{belavin_drinfeld_solutions_of_CYBE_paper} implies that \(r^\sigma_0\) is a skew-symmetric solution of the CYBE and \eqref{eq: delta = [, r]}
follows immediately from comparing
\cite[Equations (6.4) and (6.5)]{belavin_drinfeld_solutions_of_CYBE_paper} with the projections of the defining relations \eqref{eq: standard bialgebra on g(A)} to \(\Loop^\sigma\).
Secondly, applying Lemma \ref{thm: GxG r = r}
we get the statement for an arbitrary finite order automorphism \( \sigma \) and 
\( t = 0 \).
Finally, since \(r^\sigma_0\) is skew-symmetric, the skew-symmetry of \( t \) is equivalent to the skew-symmetry of \( r^\sigma_t \) and a straightforward computation gives the equality
\begin{equation}
    \textnormal{CYB}(r^\sigma_t) =  \textnormal{CYB}(r^\sigma_0) + \textnormal{CYB}(t) - \textnormal{Alt}((\delta_0^\sigma \ot 1)t) = \textnormal{CYB}(t) - \textnormal{Alt}((\delta_0^\sigma \ot 1)t),
\end{equation}
which completes the proof.
\end{proof}

We finish this subsection by relating \rmatrs
of the form \( r_t^\sigma \) to trigonometric
\rmatrs in the sense of the Belavin-Drinfeld classification
\cite{belavin_drinfeld_solutions_of_CYBE_paper}. 

\begin{theorem}\label{thm: r(x,y)=X(u-v)}
Let \( t \) be a classical twist of the standard
Lie bialgebra structure \( \delta_0^\sigma \)
on \( \Loop^\sigma \) and \( r_t^\sigma = r_0^\sigma + t \)
be the corresponding \rmatr .
Then there exists
a holomorphic function
\( \varphi \colon \CC \longrightarrow \textnormal{Inn}_{\CC - \textnormal{LieAlg}}(\g) \)
and a trigonometric \rmatr
\( X \colon \CC \longrightarrow \g \ot \g \) such that
\begin{equation}
X(u-v) = (\varphi(u)^{-1} \ot \varphi(v)^{-1}) r(e^{u/|\sigma|}, e^{v/|\sigma|}).
\end{equation}
\end{theorem}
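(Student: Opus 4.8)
The plan is to regard $r := r_t^\sigma$ as a non-degenerate skew-symmetric solution of the CYBE --- which it is by Theorem \ref{thm: delta = [, r]} --- and to gauge it into a function of the single difference $u-v$, after which its analytic structure will force it to be trigonometric in the sense of \cite{belavin_drinfeld_solutions_of_CYBE_paper}. The first observation I would record is that the twist $t \in \Loop^\sigma \ot \Loop^\sigma$ is a Laurent polynomial tensor, hence \emph{entire} as a function on $\CC^* \times \CC^*$; therefore $r_t^\sigma = r_0^\sigma + t$ has the same poles as $r_0^\sigma$ and the same behaviour along the diagonal. Writing $\tilde r(u,v) := r(e^{u/|\sigma|}, e^{v/|\sigma|})$, formula \eqref{eq: r 0 sigma} shows that $\tilde r$ has a simple pole along $u = v$ with residue the full Casimir $C = \sum_k C_k^\sigma$, which is non-degenerate; hence $\tilde r$ is a non-degenerate solution of the CYBE.

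The decisive point is that the expansion \eqref{eq: tensor expr to r_0 sigma} exhibits $r_0^\sigma(e^{u/|\sigma|}, e^{v/|\sigma|})$ as a function of $u-v$ alone, so that $(\partial_u + \partial_v)\tilde r = (\partial_u + \partial_v)\,t(e^{u/|\sigma|},e^{v/|\sigma|})$ is \emph{entire}: the failure of $\tilde r$ to be difference-depending is concentrated in the twist and contributes no poles. Following the argument of \cite{belavin_drinfeld_solutions_of_CYBE_paper}, I would then use the CYBE together with non-degeneracy to show that this obstruction lies in the tangent direction of the gauge orbit, i.e. that there is a holomorphic $\xi \colon \CC \longrightarrow \g$ with
\[
\partial_u \tilde r(u,v) + \partial_v \tilde r(u,v)
= \bigl(\ad \xi(u) \ot 1 + 1 \ot \ad \xi(v)\bigr)\,\tilde r(u,v).
\]
Solving the linear ODE $\varphi'(u) = \ad(\xi(u)) \circ \varphi(u)$ with $\varphi(0) = \id$ yields an entire $\varphi \colon \CC \longrightarrow \textnormal{Aut}_{\CC - \textnormal{LieAlg}}(\g)$; since its infinitesimal generator is an inner derivation and $\varphi(0) = \id$, the image lies in the identity component $\textnormal{Inn}_{\CC - \textnormal{LieAlg}}(\g)$. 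By construction $X(u-v) := (\varphi(u)^{-1} \ot \varphi(v)^{-1})\,\tilde r(u,v)$ satisfies $(\partial_u + \partial_v)X = 0$, so it depends only on $u-v$.

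It remains to identify $X$ as a trigonometric r-matrix. Since the gauge action preserves skew-symmetry and the CYBE, $X$ is a non-degenerate skew-symmetric solution of the difference CYBE. Because $\varphi$ is holomorphic and $t$ is entire, the poles of $X$ are those of $r_0^\sigma$, namely $(x/y)^{|\sigma|} = 1$, which under the substitution is the rank-one lattice $u-v \in 2\pi i \ZZ$; this excludes the elliptic case (a rank-two pole lattice) and the rational case (no off-diagonal poles), leaving the trigonometric one. The relation $f(\varepsilon_\sigma z) = \sigma(f(z))$ for $f \in \Loop^\sigma$ gives the quasi-periodicity $\tilde r(u + 2\pi i, v) = (\sigma \ot 1)\tilde r(u,v)$, which together with the Casimir residue is exactly the defining data of a trigonometric solution; if desired, Lemma \ref{thm: GxG r = r} applied with $\sigma' = \sigma_{(\mathbf 1; |\nu|)}$ absorbs the regrading into $\varphi$ and matches the Belavin--Drinfeld normalization in the principal grading.

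The main obstacle is the difference reduction of the second paragraph: in contrast with $r_0^\sigma$, the twisted r-matrix is genuinely not a function of $u-v$, and one must show that the \emph{entire} obstruction $(\partial_u+\partial_v)\tilde r$ can be removed by an \emph{inner}, holomorphic gauge. Extracting $\xi$ with values in $\g$ itself (rather than merely in $\mathrm{Der}(\g)$, which would only produce an element of $\textnormal{Aut}_{\CC - \textnormal{LieAlg}}(\g)$) and verifying that it is holomorphic on all of $\CC$ is where non-degeneracy of the Casimir residue and the skew-symmetry of $t$ carry the weight; this is precisely the place where the structural results of \cite{belavin_drinfeld_solutions_of_CYBE_paper} are invoked.
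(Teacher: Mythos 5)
Your proposal is correct and follows essentially the same route as the paper: both reduce to showing that $(\partial_u+\partial_v)\tilde r$ equals a commutator $[\,\xi(u)\ot 1 + 1\ot\xi(v),\,\tilde r\,]$ with $\xi$ holomorphic (the paper derives this explicitly by taking limits in the CYBE and contracting with $L\ot 1$ and $1\ot L$, following Belavin--Drinfeld and KPSST, which is exactly the step you defer to \cite{belavin_drinfeld_solutions_of_CYBE_paper}), then integrate the ODE $\varphi'=\ad(\xi)\varphi$, $\varphi(0)=\id$, to land in $\textnormal{Inn}_{\CC-\textnormal{LieAlg}}(\g)$ and conclude trigonometricity from the pole set $2\pi i\ZZ$. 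Your added remarks on the Casimir residue and the rank of the pole lattice are consistent with, but not needed beyond, the paper's argument.
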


\begin{proof}
Let
\begin{equation}
  r(x,y) \coloneqq r^\sigma_t(x,y) = \frac{1}{(x/y)^{|\sigma|} - 1}
  \widetilde{C} \seq{x/y}
  + g(x,y),
\end{equation}
where
\( \widetilde{C}(z) \coloneqq \sum_{k = 0}^{{|\sigma|}-1}
z^k C^\sigma_k\).
Following the arguments in \cite{belavin_drinfeld_diffrence_depending} and \cite[Theorem 11.3]{KPSST} 
we rewrite the CYBE for
\( r \) in the form
\begin{align*}
[r^{12}(x,y), r^{13}(x,z)] +
[r^{12}(x,y) + r^{13}(x,z), g^{23}(y,z)] +
\frac{1}{(y/z)^{|\sigma|} - 1}
[r^{12}(x,y) + r^{13}(x,z), \widetilde{C}^{23}(y/z)] = 0.
\end{align*}
Calculating the limit \( y \to z \) using L'Hospital's rule we obtain
\begin{align*}
[r^{12}(x,z), r^{13}(x,z)] +
[r^{12}(x,z) + r^{13}(x,z), g^{23}(z,z) + {|\sigma|}^{-1}
(\widetilde{C}'(1))^{23}] +
\frac{z}{{|\sigma|}}
[ \partial_z r^{12}(x,z),
\widetilde{C}^{23}(y/z)] = 0.
\end{align*}
Applying the function
\( 1 \ot L \colon \g \ot \g \ot \g \longrightarrow \g \ot \g \),
where \( L(a \ot b) \coloneqq [a,b] \), we get the
equality
\begin{equation}\label{eq: diff eq 1}
[r(x,z), r(x,z)] + [r(x,z), 1 \ot f(z)]
+ \frac{z}{{|\sigma|}} \partial_z r(x,z) = 0,
\end{equation}
where \( f(z) \coloneqq L(g(z,z) + {|\sigma|}^{-1}
\widetilde{C}'(1)) \) and
\( [a \ot b, c \ot d] \coloneqq [a,c] \ot [b,d] \).
Similarly, letting \( x \to y \) in the CYBE for
\( r \) and then applying \( L \ot 1 \) we obtain the identity
\begin{equation}\label{eq: diff eq 2}
[r(y,z), r(y,z)] - [r(y,z), f(y) \ot 1]
- \frac{y}{{|\sigma|}} \partial_y r(y,z) = 0.
\end{equation}
Subtracting \eqref{eq: diff eq 1} from
\eqref{eq: diff eq 2} and setting
\( x = y = e^{u/{|\sigma|}} \) and \( z = e^{v/{|\sigma|}} \) we get
\begin{equation}\label{eq: diff eq}
\partial_u r(e^{u/{|\sigma|}}, e^{v/{|\sigma|}})
+
\partial_v r(e^{u/{|\sigma|}}, e^{v/{|\sigma|}})
=
[h(u) \ot 1 + 1 \ot h(v), r(e^{u/{|\sigma|}}, e^{v/{|\sigma|}})],
\end{equation}
for \( h(u) \coloneqq f(e^{u/{|\sigma|}}) \).
Since \( h \) is holomorphic on \( \CC \), we can
find a holomorphic function
\( \varphi \colon \CC \longrightarrow
\textnormal{Aut}_{\CC - \textnormal{LieAlg}} (\g)  \)
such that
\( \varphi'(z) =  \ad(h(z))\varphi(z) \) and
\( \varphi(0) = \textnormal{id}_{\g} \)
(see \cite[Proof of Theorem 11.3]{KPSST}). 
The connected component of
\( \textnormal{id}_{\g} \)
in the group \( \textnormal{Aut}_{\CC - \textnormal{LieAlg}} (\g) \)
is exactly the inner
automorphisms of \( \g \) and thus
\( \varphi \colon \CC \longrightarrow 
\textnormal{Inn}_{\CC - \textnormal{LieAlg}} (\g)
\). Finally, the relation \eqref{eq: diff eq}
implies that the \rmatr
\begin{equation}
\widetilde{X}(u,v)
\coloneqq
(\varphi(u)^{-1} \ot \varphi(v)^{-1})r(e^{u/{|\sigma|}}, e^{v/{|\sigma|}})
\end{equation}
satisfies the equation
\(
\partial_u \widetilde{X}(u,v)
+
\partial_v \widetilde{X}(u,v) = 0 \).
Therefore, we can define
\( X(u-v) \coloneqq
\widetilde{X}(u-v, 0) = \widetilde{X}(u,v) \).
The set of poles of \( X \) is \( 2\pi i \ZZ \)
and hence it is a trigonometric \rmatr.
\end{proof}

From now on \rmatrs of the form
\(r^\sigma_t = r_0^\sigma + t \),
where \( \sigma \) is a finite order automorphism
of \( \g \) and
\( t \) is a classical
twists of \( \delta_0^\sigma \), are called
\emph{\( \sigma \)-trigonometric}.

\subsection{Manin triple structure}\label{sec: Manin triples structure}
The standard Lie bialgebra stucture on an
affine Kac-Moody algebra
\eqref{eq: standard bialgebra on g(A)}
can be defined using the standard
Manin triple
(see \cite[Example 3.2]{drinfeld_quantum_groups} and \cite[Example 1.3.8]{chari_pressley}).
Restricting that triple to
\( \Loop^\sigma \) we get a Manin triple
defining the standard Lie bialgebra structure \(\delta^\sigma_0\)
on \( \Loop^\sigma \).
More precisely, \( \delta_0^\sigma \)
is defined by the Manin triple
\begin{equation}\label{eq: standard Manin triple}
\seq{\Loop^\sigma \times \Loop^\sigma, \Delta, W_0},
\end{equation}
where \( \Delta \) is the
image of the diagonal embedding
of \( \Loop^\sigma \) into \( \Loop^\sigma \times \Loop^\sigma \) and \( W_0 \)
is defined by
\begin{equation}
W_0
\coloneqq
\set{(f, g) \in \B^\sigma_+ \times \B^\sigma_- \mid
f + g \in \N^\sigma_+ \dotplus \N^\sigma_-}.
\end{equation}
The form \( \mathcal{B} \)
on \( \Loop^\sigma \times \Loop^\sigma \)
is given by
\begin{equation}\label{eq: form on LxL}
  \mathcal{B} \seq{\seq{f_1, f_2}, \seq{g_1, g_2}}
  \coloneqq
  B(f_1, g_1)
  -
  B(f_2, g_2)
  \qquad
  \forall f_1, f_2, g_1, g_2 \in \Loop^\sigma,
\end{equation}
where \( B \) is the form
\eqref{eq: form on L sigma}.
From Theorem
\ref{thm: W <-> t <-> T correspondence} we know
that classical twists \( t \) 
of \( \delta_0^\sigma \)
are in one-to-one correspondence with
Lagrangian subalgebras
\( W_t \subseteq \Loop^\sigma \times \Loop^\sigma \) complementary to \( \Delta \) and
commensurable with \( W_0 \).
We now describe the construction of such
subalgebras using \( \sigma \)-trigonometric
\rmatrs.

Let
\( \psi \colon \g \ot \g \longrightarrow \textnormal{End}_{\CC - \textnormal{Vect}} (\g) \)
and
\( \Psi \colon \Loop^\sigma \ot \Loop^\sigma \longrightarrow \textnormal{End}_{\CC - \textnormal{Vect}} (\Loop^\sigma) \)
be the natural maps given by 
\( a \ot b \longmapsto \K(b, -)a \) and
\( a \ot b \longmapsto B(b, -)a \) respectively.
Then we have the following useful identity
\begin{equation}\label{eq: useful for R_t}
\textnormal{res}_{y=0}
\left[
\frac{1}{y}
\psi(P(z,y))(f(y))
\right]
=
\Psi(P)(f)(z)
\qquad
\forall P \in \Loop^\sigma \ot \Loop^\sigma, \
\forall f \in \Loop^\sigma, \
\forall z \in \CC^*.
\end{equation}

\begin{theorem}
Let \( t \) be a classical twist of the standard
Lie bialgebra structure \( \delta_0^\sigma \)
on \( \Loop^\sigma \) and 
\( r_t = r_0^\sigma + t \)
be the corresponding
\( \sigma \)-trigonometric \rmatr.
Denote by \( \pi_\h \) and \( \pi_\pm \) the
projections of \( \Loop^\sigma \) onto
\( \h \) and \( \N^\sigma_\pm \) respectively.
Then the linear map 
\( R_t \coloneqq  \pi_\h / 2 + \pi_- + \Psi(t) \) satisfies
the relation
\begin{equation}\label{eq: def eq for r_t}
\textnormal{res}_{y=0}
\left[
\frac{1}{y}
\psi(r_t(z,y))(f(y))
\right]
=
R_t(f)(z)
\qquad
\forall f \in \Loop^\sigma, \
\forall z \in \CC^*,
\end{equation}
and the Lagrangian subalgebra \( W_t \),
corresponding to \( t \), can be described
in the following way
\begin{equation}\label{eq: Wt using Rt}
W_t = \set{\seq{\seq{R_t - 1}f, R_tf} \mid
f \in \Loop^\sigma}.
\end{equation} 
\end{theorem}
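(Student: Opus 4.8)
The goal is twofold: first, to verify the integral identity \eqref{eq: def eq for r_t}, and second, to deduce from it the explicit description \eqref{eq: Wt using Rt} of the Lagrangian subalgebra \( W_t \). My plan is to treat the two parts separately, establishing the operator identity first and then using Remark \ref{rem: manin defining twisted bialgebra} to match \( W_t \) against the definition \eqref{eq: t -> T} of the correspondence from Theorem \ref{thm: W <-> t <-> T correspondence}.

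For the first part, I would decompose \( r_t = r_0^\sigma + t \) and handle each summand using the identity \eqref{eq: useful for R_t}. The tensor \( t \in \Loop^\sigma \ot \Loop^\sigma \) contributes exactly \( \Psi(t) \) by definition of \( \Psi \), so the content is in showing that \( r_0^\sigma \) contributes \( \pi_\h/2 + \pi_- \). Here I would use the Taylor expansion \eqref{eq: tensor expr to r_0 sigma} of \( r_0^\sigma \), namely \( r_0^\sigma(x,y) = C_\h/2 + \sum b^i_{(-\alpha,-k)}(x) \ot b^i_{(\alpha,k)}(y) \) over positive roots, which is valid in \( |y| < |x| \). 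Feeding this series into the residue \( \textnormal{res}_{y=0} [\frac{1}{y}\psi(r_0^\sigma(z,y))(f(y))] \) and using the orthonormality \( B(b^i_{(\alpha,k)}, b^j_{(-\alpha,-k)}) = \delta_{ij} \) of the chosen root-space bases, the residue picks out precisely the negative-root components together with half the Cartan component of \( f \); that is, it computes \( (\pi_\h/2 + \pi_-)(f) \). The point where I expect to have to be careful is the convergence and the interchange of the residue with the infinite sum: the series only converges for \( |y|<|x| \), so I would take the residue at \( y=0 \) formally via the Laurent-coefficient interpretation and justify termwise evaluation using the absolute convergence stated after \eqref{eq: tensor expr to r_0 sigma}.

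For the second part, I would invoke Remark \ref{rem: manin defining twisted bialgebra}, which says \( W_t = \{ T w - w \mid w \in W_0 \} \) where \( T \colon W_0 \to \Delta \) is the map \eqref{eq: t -> T} built from \( t \) via the canonical form \( \mathcal{B} \) of \eqref{eq: form on LxL}. Under the diagonal identification of \( \Delta \) with \( \Loop^\sigma \), I expect to show that the composite of \( T \) with this identification is governed precisely by the operator \( R_t \): concretely, I would check that for \( w \in W_0 \) corresponding (through the splitting of \( \Loop^\sigma \times \Loop^\sigma \) dictated by \( W_0 \) and \( \Delta \)) to some \( f \in \Loop^\sigma \), the element \( Tw - w \) equals \( ((R_t - 1)f, R_t f) \). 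The identity \eqref{eq: def eq for r_t} is exactly the bridge that translates the tensor-level definition \( T = \mathcal{B}(\,\cdot\,, -)(\,\cdot\,) \) into the operator \( R_t \), since the residue formula encodes how \( \Psi(t) \) and the projection data of \( r_0^\sigma \) act.

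The main obstacle I anticipate is bookkeeping the two copies in \( \Loop^\sigma \times \Loop^\sigma \): one must carefully track how the form \( \mathcal{B} \) in \eqref{eq: form on LxL}, with its sign difference between the two factors, interacts with the single-copy form \( B \) appearing in the residue identity, and how the decomposition \( W_0 = \{(f,g) \in \B^\sigma_+ \times \B^\sigma_- \mid f+g \in \N^\sigma_+ \dotplus \N^\sigma_-\} \) pairs against \( \Delta \). Getting the factor of \( 1/2 \) on \( \pi_\h \) and the correct assignment of \( \pi_- \) (as opposed to \( \pi_+ \)) right is where the standard Manin triple \eqref{eq: standard Manin triple} structure must be used with full attention to the conventions fixed in \eqref{eq: form on L sigma} and \eqref{eq: r 0 sigma}.
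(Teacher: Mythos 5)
Your proposal is correct and follows essentially the same route as the paper: the first identity is obtained by splitting $r_t = r_0^\sigma + t$, feeding the Taylor expansion \eqref{eq: tensor expr to r_0 sigma} into the residue termwise (justified by absolute convergence in an annulus) to produce $\pi_\h/2 + \pi_-$, with $\Psi(t)$ coming from \eqref{eq: useful for R_t}; the second identity is obtained by parametrizing $W_0$ via $(w_1,w_2)\mapsto w_2 - w_1$ (using $\pi_\h(w_1) = -\pi_\h(w_2)$) and then passing to $W_t = \{Tw - w\}$ as in Remark \ref{rem: manin defining twisted bialgebra}. The paper merely organizes this by first proving everything for $t=0$ and then invoking linearity, which is the same argument in a different order.
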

\begin{proof}
We prove the theorem for
\( \sigma = \sigma_{(s;|\nu|)} \) and \( t=0 \).
The general result then follows by linearity and equation \eqref{eq: useful for R_t}.
Writing \( r_0^\sigma(z,y) \) as series
\eqref{eq: tensor expr to r_0 sigma} and applying
\( \psi \) we get
\begin{equation}
\psi(r_0^\sigma(z,y))(f(y))
=
\frac{\psi(C_\h)(f(y))}{2}
+
\sum_{\substack{(\alpha, k) \in \Phi_\sigma^+ \\ 1 \leq i \leq n_{(\alpha, k)}}}
\psi\seq{b^i_{(-\alpha, -k)}(z) \ot b^i_{(\alpha, k)}(y)}\Big(f(y)\Big).
\end{equation}
The absolute convergence of the series in the
annulus
\( \epsilon < |y| < |z| \)
for any \( \epsilon \in \RR_+ \)
allows the componentwise
calculation of the residue, i.e.
\begin{equation}
\textnormal{res}_{y=0}
\left[
\frac{1}{y}
\psi(r_t(z,y))(f(y))
\right]
=
\frac{\pi_\h (f(y))}{2} + 
\sum_{(\alpha, k) \in \Phi_\sigma^+}
\pi_{(-\alpha,-k)}(f(y))
=
R_0(f)(z),
\end{equation}
where \( \pi_{(\alpha,k)} \) is the projection of 
\( \Loop^\sigma \) onto \( \Loop^\sigma_{(\alpha,k)} \).

For the second statement let us take an arbitrary
\( (w_1, w_2) \in W_0 \). The relation
\( \pi_\h(w_1) = - \pi_\h(w_2) \) implies 
\( (w_1, w_2) = ((R_0 - 1)(w_2 - w_1), R_0 (w_2 - w_1)) \). The desired result now follows from the
fact that \( (w_1, w_2) \longmapsto w_2 - w_1 \) is
an isomorphism between \( W_0 \) and \( \Loop^\sigma \).
\end{proof}

\begin{remark}\label{rem: large double}
For later sections it is convenient to define 
another, more geometric,
Manin triple defining the standard Lie
bialgebra structure \( \delta_0^\sigma \) on
\( \Loop^\sigma \).
Define \( m \coloneqq |\sigma| \), 
\( O^\sigma \coloneqq \CC[z^m, z^{-m}] \) and \( \widehat{O}^\sigma_{\pm} \coloneqq \CC(\!(z^{\pm m})\!) \).%
\footnote{The notation \( \CC(\!(u)\!) \) is used to
  denote the ring of Laurent series of the form
  \( \sum_{k = N}^{\infty} a_k u^k \), where
  \( a_k \in \CC \) and \( N \in \ZZ \).}
The Lie algebra
\( \Loop^\sigma \) is naturally an \( O^\sigma \)-module
and hence we can extend it to
\( \widehat{\Loop}^\sigma_\pm = \Loop^\sigma \ot_{O^\sigma} \widehat{O}^\sigma_\pm \).
Equip the product Lie algebra
\( \widehat{\Loop}^\sigma_+ \times \widehat{\Loop}^\sigma_- \)
with the following bilinear form
\begin{equation}
\mathcal{B}((f_1, f_2), (g_1, g_2))
\coloneqq
\textnormal{res}_{z=0}
\left[
\frac{1}{z}
\K(f_1, g_1)
\right]
-
\textnormal{res}_{z=0}
\left[
\frac{1}{z}
\K(f_2, g_2)
\right],
\end{equation}
where 
\( \K(\sum_i a_i z^i, \sum_j b_j z^j)
\coloneqq
\sum_{i,j} \K(a_i, b_j) z^{i+j}   \) and
\( \textnormal{res}_{z = 0} \) reads off the
coefficient of \( z^{-1} \). The restriction
of this form to \( \Loop^\sigma \times \Loop^\sigma \)
is the form \eqref{eq: form on LxL} defined earlier.
Consider the subset
\begin{equation}
  \widehat{W}_0
  =
  \set{(f, g) \in
  \widehat{\B}^\sigma_+ \times \widehat{\B}^\sigma_- \mid
  \widehat\pi^+_\h(f) =  -\widehat\pi^-_\h(g)}
     \subseteq \widehat{\Loop}^\sigma_+ \times
     \widehat{\Loop}^\sigma_- ,
  \end{equation}
where 
\( \widehat{\N}^\sigma_\pm \) stands for the
completion of \( \N^\sigma_\pm \)
with respect to the ideal
\( (z^{\pm m}) \subseteq \CC[z^{\pm m}] \), \( \widehat{\B}^\sigma_\pm \coloneqq
\h \add \widehat{\N}^\sigma_\pm \) and \(\widehat\pi_\h^\pm \colon \widehat\Loop^\sigma_\pm \longrightarrow \h\) are the canonical projections
. Then \(\widehat{W}_0\)
  is a Lagrangian subalgebra
  complementary to the diagonal embedding
  \( \Delta \) of \( \Loop^\sigma \) into
  \( \widehat{\Loop}^\sigma_+ \times
     \widehat{\Loop}^\sigma_- \).
Since the Lie bracket on \( W_0 \)
is the restriction of the Lie bracket on 
\( \widehat{W}_0 \), the Manin triple
\begin{equation}\label{eq: geometric manin triple}
  (\widehat{\Loop}^\sigma_+ \times
     \widehat{\Loop}^\sigma_-, \Delta, 
     \widehat{W}_0)
\end{equation}
also defines the standard Lie bialgebra structure 
\( \delta_0^\sigma \) on \( \Loop^\sigma \).

The geometric nature of this Manin triple is revealed in
\cite{TBA}: the sheaves used for construction of \( \sigma \)-trigonometric \rmatrs can be viewed as formal gluing of twisted versions of \( \widehat{W}_0 \) with
\( \Loop^\sigma \cong \Delta \) over the nodal Weierstraß cubic.
\end{remark}

\subsection{Regular equivalence}\label{subsec: Gauge equivalence}
Let us fix a finite order
automorphism \( \sigma \) of \( \g \).
We now turn to defining the notion of equivalence for twisted standard bialgebra structures on \( \Loop^\sigma \) which is compatible with
the corresponding pseudoquasitriangular and Manin triple structures. In other words, we want equivalences of Lie bialgebras to induce equivalences of the corresponding Manin triples and trigonometric \rmatrs and vice versa.
We stress that the notion of holomorphic equivalence used in the
Belavin-Drinfeld classification \cite{belavin_drinfeld_solutions_of_CYBE_paper} is unsuitable for our purpose, because in general it does not provide isomorphisms of loop algebras.

In the spirit of
\cite{stolin_sln, stolin_maximal_orders} we define a
\emph{regular equivalence} on the loop algebra \( \Loop^\sigma \) to be a regular function \(\phi \colon \CC^* \longrightarrow \textnormal{Aut}_{\CC-\textnormal{LieAlg}}(\g)\) preserving the quasi-periodicity of \( \Loop^\sigma \), i.e.
\begin{equation}
\phi(\varepsilon_\sigma z) = \sigma \phi(z) \sigma^{-1},
\end{equation}
where \( \varepsilon_\sigma = e^{2 \pi i / |\sigma|} \).
Recalling that \( O^\sigma = \CC[z^{|\sigma|}, z^{-|\sigma|}] \),
we can equivalently define a regular equivalence on \( \Loop^\sigma \)
to be an element of
\( \textnormal{Aut}_{O^\sigma - \textnormal{LieAlg}} (\Loop^\sigma) \). The equivalence between these two definitions is given by
\( \phi(f)(z) \coloneqq \phi(z)f(z) \)

By definition
\eqref{eq: Loop sigma regular funcs}
the space \( \Loop^\sigma \ot \Loop^\sigma \) can be viewed as the space of regular functions 
\( T \colon \CC^* \times \CC^* \longrightarrow \g \ot \g \) such that
\( (1 \ot \sigma)T(x,y) = T(x,\varepsilon_\sigma y) \) and
\( (\sigma \ot 1)T(x,y) = T(\varepsilon_\sigma x,y) \).
It is straightforward to check that if such a function \( T \) vanishes
along the diagonal, i.e.
\( T(z,z) = 0 \) for all \( z \in \CC^* \), then it is divisible by
\( (x/y)^{|\sigma|} -1 \).
Applying this observation to the function
\begin{equation}
(\phi(x) \ot \phi(y)) \sum_{k = 0}^{|\sigma|-1} \seq{\frac{x}{y}}^k C^\sigma_k - 
\sum_{k = 0}^{|\sigma|-1} \seq{\frac{x}{y}}^k C^\sigma_k,
\end{equation}
where \( \phi \in \textnormal{Aut}_{O^\sigma - \textnormal{LieAlg}} (\Loop^\sigma) \), we see that
\( (\phi(x) \ot \phi(y))r_t^\sigma(x,y) = r_0^\sigma(x,y) + s(x,y) \) for some classical twist \( s \), i.e. it is again
a \( \sigma \)-trigonometric \rmatr.

The following theorem demonstrates that the notion of a regular equivalence meets all our needs.
\begin{theorem}\label{thm: Gauge equivalence}
Let \( \phi \) be a regular equivalence on \(\Loop^\sigma\) and 
\( s,t \in \Loop^\sigma \ot \Loop^\sigma \) be
two classical twists of the standard Lie bialgebra
structure \( \delta_0^\sigma \) on \( \Loop^\sigma \).
The following are equivalent:
\begin{enumerate}
\item \( r^\sigma_t(x,y) = (\phi(x) \ot \phi(y))r^\sigma_s(x,y) \)
  for all \( x,y \in \CC^*, \ x^{|\sigma|} \ne y^{|\sigma|} \);
\item \( \delta^\sigma_t \phi
            =
            (\phi \ot \phi) \delta^\sigma_s \);
\item \( W_t = (\phi \times \phi) W_s \).
\end{enumerate}
\end{theorem}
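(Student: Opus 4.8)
The plan is to establish a cyclic chain of implications, exploiting the three parallel descriptions of a twisted standard structure—as an \rmatr $r^\sigma_t$, as a cobracket $\delta^\sigma_t$, and as a Lagrangian subalgebra $W_t$—that were set up in the previous subsections. I would prove $(1)\Rightarrow(2)\Rightarrow(3)\Rightarrow(1)$, since each arrow translates the equivariance of $\phi$ across one of the defining formulas.

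\emph{First}, for $(1)\Rightarrow(2)$, I would start from the defining relation \eqref{eq: delta = [, r]} of Theorem \ref{thm: delta = [, r]}, which expresses $\delta^\sigma_t(f)$ through the commutator of $f(x)\ot 1 + 1\ot f(y)$ with $r^\sigma_t(x,y)$. The key computation is to apply the regular equivalence $\phi$: since $\phi$ is a regular function into $\textnormal{Aut}_{\CC-\textnormal{LieAlg}}(\g)$, conjugation by $\phi(x)\ot\phi(y)$ is a Lie algebra automorphism of $\g\ot\g$ in each tensor leg, so it commutes with the bracket. Writing $(\phi\ot\phi)\delta^\sigma_s(f)$ evaluated at $(x,y)$ and using $\phi(x)^{-1}(\phi f)(x) = f(x)$ (the definition $\phi(f)(z)=\phi(z)f(z)$), one transports the commutator through $\phi(x)\ot\phi(y)$ and recognizes the right-hand side as $[\,(\phi f)(x)\ot 1 + 1\ot (\phi f)(y),\, (\phi(x)\ot\phi(y))r^\sigma_s(x,y)\,]$. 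Hypothesis $(1)$ identifies the last factor with $r^\sigma_t(x,y)$, and \eqref{eq: delta = [, r]} applied to $\phi f$ then yields $\delta^\sigma_t(\phi f)$, giving $(2)$.

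\emph{Second}, for $(3)\Rightarrow(1)$ I would use the explicit description \eqref{eq: Wt using Rt}, $W_t=\{((R_t-1)f,R_tf)\mid f\in\Loop^\sigma\}$, together with the defining residue formula \eqref{eq: def eq for r_t} relating $R_t$ to $r^\sigma_t$ through $\psi$ and $B$. The condition $W_t=(\phi\times\phi)W_s$ translates, after matching components, into an operator identity of the form $R_t\,\phi = \phi\,R_s$ (equivalently on $R-1$), where $\phi$ now denotes the induced map on $\Loop^\sigma$. Feeding this into \eqref{eq: def eq for r_t} and using the identity \eqref{eq: useful for R_t} for $\Psi$, I would reconstruct the kernel $r^\sigma_t$ from $R_t$ and show it equals $(\phi(x)\ot\phi(y))r^\sigma_s(x,y)$; the nondegeneracy of $B$ and the fact that a function vanishing on the diagonal is divisible by $(x/y)^{|\sigma|}-1$ (noted in this subsection) are what let me pass between the operator identity and the tensor identity. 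The remaining arrow $(2)\Rightarrow(3)$ is the most structural: here I would invoke Remark \ref{rem: manin defining twisted bialgebra}, which says $\delta^\sigma_t$ is defined by the Manin triple $(\Loop^\sigma\times\Loop^\sigma,\Delta,W_t)$, and Remark \ref{rem: isomorphic manin tiples define isomorphic bialgebras}. A regular equivalence $\phi$ preserves both $\Delta$ (it acts diagonally) and the form $\mathcal{B}$ of \eqref{eq: form on LxL}, since $\phi(z)$ is a $B$-isometry of $\g$ for each $z$; thus $\phi\times\phi$ is a Manin triple isomorphism sending $(\Loop^\sigma\times\Loop^\sigma,\Delta,W_s)$ to $(\Loop^\sigma\times\Loop^\sigma,\Delta,(\phi\times\phi)W_s)$. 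Condition $(2)$ says $\phi$ is a Lie bialgebra isomorphism $\delta^\sigma_s\to\delta^\sigma_t$, and by the uniqueness built into the correspondence of Theorem \ref{thm: W <-> t <-> T correspondence} the defining Lagrangian subalgebra is determined by the cobracket, forcing $(\phi\times\phi)W_s=W_t$.

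The main obstacle I anticipate is the arrow $(2)\Rightarrow(3)$: going from an abstract Lie bialgebra isomorphism back to equality of Lagrangian subalgebras requires that the Manin triple defining a given twisted structure be pinned down uniquely among those commensurable with $W_0$, rather than merely up to abstract isomorphism. The earlier Remark warns that several non-isomorphic Manin triples can define the same bialgebra, so I must argue that once we fix the ambient double $\Loop^\sigma\times\Loop^\sigma$ and the diagonal $\Delta$, the complementary Lagrangian is uniquely recovered from $\delta^\sigma_t$ via the $T$-correspondence of Theorem \ref{thm: W <-> t <-> T correspondence}. Verifying that $\phi\times\phi$ indeed fixes $\Delta$ and preserves $\mathcal{B}$ is routine, but the uniqueness of the recovered subalgebra is the delicate point on which the implication rests.
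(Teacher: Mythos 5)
Your proposal is correct in substance but runs the cycle in the opposite direction from the paper: you prove \(1\Rightarrow 2\Rightarrow 3\Rightarrow 1\), while the paper proves \(1\Rightarrow 3\Rightarrow 2\Rightarrow 1\). Your \(1\Rightarrow 2\) is a direct and clean computation with the cocommutator formula \eqref{eq: delta = [, r]} (the paper instead gets \(2\Rightarrow 1\) from the same formula plus a uniqueness argument), and your \(3\Rightarrow 1\) is essentially the paper's \(1\Rightarrow 3\) run backwards: both hinge on the residue formula \eqref{eq: def eq for r_t}, the identity \(\phi^*=\phi^{-1}\) for the Killing form, and the fact that \(R_t\) determines \(t\) by nondegeneracy of \(B\). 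The real divergence is where the ``rigidity'' input is spent. The paper uses the fact that \(\Loop^\sigma\) has no nonzero \(\ad\)-invariant tensors in \(\Loop^\sigma\ot\Loop^\sigma\) (via \cite[Lemma 8.6]{kac_book}) to close the loop at \(2\Rightarrow 1\); you need the very same fact to close yours at \(2\Rightarrow 3\). Indeed, the uniqueness you invoke from Theorem \ref{thm: W <-> t <-> T correspondence} is a bijection between twists \(t\) and Lagrangians \(W_t\); to conclude that the Lagrangian is determined by the \emph{cobracket} you additionally need injectivity of \(t\mapsto dt\), i.e.\ that \(dt=ds'\) forces \(t=s'\), which is exactly the statement that no nonzero skew tensor is \(\ad\)-invariant. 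You correctly identify this as the delicate point but attribute it to the wrong source; once you cite the absence of invariant tensors (and check, e.g.\ via the observation preceding the theorem that \((\phi\ot\phi)r_s^\sigma=r_0^\sigma+s'\) for a classical twist \(s'\), that \((\phi\times\phi)W_s\) is again of the form \(W_{s'}\), hence complementary to \(\Delta\) and commensurable with \(W_0\)), your argument is complete. Neither route is materially shorter; the paper's ordering lets the Manin-triple functoriality do the easy arrow \(3\Rightarrow 2\), whereas yours makes the bracket computation the easy arrow and pushes the uniqueness burden onto the Manin-triple step.
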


\begin{proof}

\emph{''1. \( \implies \) 3.''\,:}
If \( r_t(x,y) = (\phi(x) \ot \phi(y))r_s(x,y) \)
for all \( x, y \in  \CC^* \), \( x^{|\sigma|} \neq y^{|\sigma|} \),
then \eqref{eq: def eq for r_t} implies  \( R_t = \phi R_s \phi^* \). 
Since the adjoint of \( \phi(z) \) with respect to
the Killing form is \( \phi(z)^{-1} \), we have
\( \phi^* = \phi^{-1} \).
The formula \eqref{eq: Wt using Rt} applied to both
\( W_t \) and \( W_s \) gives
\begin{equation}
\begin{split}
  (\phi \times \phi)W_s
  &=
  \set{\seq{\phi (R_s - 1) \phi^{-1} (\phi f), 
             \phi R_s \phi^{-1} (\phi f)}
             \mid f \in \Loop^\sigma} = W_t.
\end{split}
\end{equation}

\emph{''3. \( \implies \) 2.''\,:}
Assuming \( W_t = (\phi \times \phi) W_s \), we can easily see that \(\phi \times \phi\)
is an isomorphism of Manin tiples
\((\Loop^\sigma \times \Loop^\sigma, \Delta,W_t)\)
and 
\((\Loop^\sigma \times \Loop^\sigma, \Delta,W_s)\).
Identifying \( \Delta \) with \( \Loop^\sigma \) and applying Remark \ref{rem: isomorphic manin tiples define isomorphic bialgebras}
we immediately get the desired isomorphism
\(\phi  \colon (\Loop^\sigma,\delta_t) \longrightarrow (\Loop^\sigma,\delta_s)\).

\emph{''2. \( \implies \) 1.''\,:}
Since \( \Loop^\sigma \) has no
non-trivial finite-dimensional ideals (see \cite[Lemma 8.6]{kac_book}),
the only element in 
\( \Loop^\sigma \ot \Loop^\sigma \) invariant under
the adjoint action of
\( \Loop^\sigma \) is
\( 0 \).
Applying this result to the equality
\begin{equation}
\begin{split}
  \left[
    \phi(f)(x) \ot 1 + 1 \ot \phi(f)(y), r_t(x,y)
  \right]
  &=
  \delta_t \phi (f)(x,y) 
  =
  (\phi(x) \ot \phi(y)) \delta_s (f)(x,y) \\
  &=
  (\phi(x) \ot \phi(y))
  \left[
    f(x) \ot 1 + 1 \ot f(y), r_s(x,y)
  \right] \\
  &=
  \left[
    \phi(f)(x) \ot 1 + 1 \ot \phi(f)(y),
    (\phi(x) \ot \phi(y)) r_s(x,y)
  \right],
\end{split}
\end{equation}
where \( f \in \Loop^\sigma \) and
\( x,y \in \CC^*, x^{|\sigma|} \neq y^{|\sigma|}\), we get the last implication.
\end{proof}
We say that two twisted standard bialgebra structures
or \(\sigma\)-trigonometric \rmatrs
are \emph{regularly equivalent} if one of the equivalent
conditions in Theorem 
\ref{thm: Gauge equivalence} 
holds.

\section{The main classification theorem and its consequences}
Before stating the main classification theorem
we recall the notion of a Belavin-Drinfeld
quadruple for an arbitrary finite order automorphism
\( \sigma \), defined in \cite{belavin_drinfeld_solutions_of_CYBE_paper},
and then associate it
with a classical twist of the standard
Lie bialgebra structure \( \delta_0^\sigma \).

We start with the case
\( \sigma = \sigma_{(s; |\nu|)} \).
Let \( \Pi^\sigma, \Lambda^\sigma\) and \( \Phi_\sigma \) be as at the end of Section \ref{sec: Loop algebras}.
A Belavin-Drinfeld (BD) quadruple
is a quadruple
\( Q = \seq{\Gamma_1, \Gamma_2, \gamma, t_\h} \),
where \( \Gamma_1 \) and \( \Gamma_2 \)
are proper subsets
of the simple root system \( \Pi^\sigma \),
\( \gamma \colon \Gamma_1 \longrightarrow \Gamma_2 \) is a bijection and
\( t_\h \in \h \wedge \h \) such that
\begin{enumerate}
\item \( B\seq{\dual{\alpha_{\gamma(i)}}, \dual{\alpha_{\gamma(j)}}} = B\seq{\dual{\alpha_i}, \dual{\alpha_j}} \)
for all 
\( \widetilde{\alpha}_i, \widetilde{\alpha}_j \in \Gamma_1 \),
where \(
\widetilde{\alpha}_{\gamma(i)} \coloneqq \gamma(\widetilde{\alpha}_i) \);
\item For any \( \widetilde{\alpha}_i \in \Gamma_1 \)
there is a positive integer \( k \) such that
\( \gamma^k(\widetilde{\alpha}_i) \not \in \Gamma_1 \);
\item \( ( \alpha_{\gamma(i)} \ot 1
+ 1 \ot \alpha_i)(t_{\h} + C_{\h}/2) = 0 \)
for all
\( \widetilde{\alpha}_i \in \Gamma_1 \).
\end{enumerate}
The bijection \( \gamma \) induces
an isomorphism
\( \theta_\gamma \colon \mathfrak{S}^{\Gamma_1} \longrightarrow
\mathfrak{S}^{\Gamma_2}\),
\( \theta_\gamma(z^{\pm s_i}X_i^\pm(1)) \coloneqq z^{\pm s_{\gamma(i)}}X_{\gamma(i)}^\pm(1) \), which we extend by \( 0 \) to the whole
\( \Loop^\sigma \).
Let
\( \Phi_{1} \subseteq \Phi_\sigma \)
be the subset of roots that can be written as linear
combinations of elements in \( \Gamma_1 \).
For each \( \widetilde{\alpha} \in \Phi_1 \)
we choose an element
\( b_{\widetilde{\alpha}} \in \Loop^\sigma_{\widetilde{\alpha}}  \)
such that
\( B(b_{\widetilde{\alpha}}, b_{- \widetilde{\alpha}}) = \nolinebreak 1 \)
and construct the following skew-symmetric
tensor
\begin{equation}\label{eq: tQ}
t^\sigma_Q \coloneqq t_\h + \sum_{\widetilde{\alpha}
\in \Phi^+_{1}}
\sum_{j = 1}^{\infty} b_{-\widetilde{\alpha}} \wedge
\theta^j_\gamma \seq{b_{\widetilde{\alpha}}}
\in \Loop^\sigma \ot \Loop^\sigma,
\end{equation}
where \( \Phi^+_1 = \Phi_1 \cap \Phi^+_\sigma \) and
the second sum has only finitely many non-zero
terms since
\( \theta_\gamma \) is nilpotent
by condition 2.

We write
\( r_Q^\sigma, \delta_Q^\sigma, R_Q\)
and \(  W_Q \) instead of
\( r_{t^\sigma_Q}^\sigma, \delta^\sigma_{t^\sigma_Q},
R_{t_Q^\sigma} \) and \( W_{t_Q^\sigma} \)
respectively. 
In the case \( s = (1, \ldots, 1) \) the functions
\(r_Q^\sigma \) and \( R_Q\) as well as the Cayley transform of \( R_Q \) were studied in details in \cite{belavin_drinfeld_solutions_of_CYBE_paper}.
Using regrading and Lemma \ref{thm: GxG r = r} we derive the following statements:
\begin{itemize}
    \item \(r_t^\sigma\) is a skew-symmetric solution of CYBE.
    Hence Theorem \ref{thm: delta = [, r]} implies that
    \(t_Q^\sigma\) is a classical twist of \(\delta_0^\sigma\);
    \item The inhomogeneous system of linear equations constraining \(t_\h\) is consistent. The dimension of its solution space 
    is \(\ell(\ell - 1)/2\), where \(\ell = |\Pi^\sigma \setminus \Gamma_1|\);
    \item Setting 
    \( \theta^{\pm}_{\gamma^{\pm 1}} \coloneqq
    \theta_{\gamma^{\pm 1}} |_{\N_{\pm}}  \), we have
    \begin{equation}\label{eq: R_Q}
    R_Q = \theta^+_\gamma (\theta^+_\gamma - \pi_+)^{-1}
    + (\psi(t_\h) + \id_\h/2)
    + (\pi_- - \theta^{-}_{\gamma^{-1}})^{-1};
    \end{equation}
    \item Let \( \h_1 \coloneqq \im(\psi(t_\h) - \id_\h/2) \) and
    \( \h_2 \coloneqq \im(\psi(t_\h) + \id_\h/2) \).
    The Cayley transform of \( R_Q \) is 
    the triple
    \( (C^1_Q, C^2_Q, \theta_Q) \), where
    \begin{equation}
    \begin{split}
    C^1_Q &\coloneqq \im(R_Q - \id) = 
    \N_+ \add \h_1 \add \N_-^{\Gamma_1}, \\
    C^2_Q &\coloneqq \im(R_Q) = \N_+^{\Gamma_2} \add \h_2 \add \N_-,
    \end{split}
    \end{equation}
    and
    \(\theta_Q\) is the unique gluing of \(\theta_\gamma\) with the natural isomorphism 
    \begin{equation}
    \begin{split}
    \phi \colon \frac{\im(\psi(t_\h) - \id_\h /2 )}{\ker(\psi(t_\h) + \id_\h /2 )}
    &\longrightarrow
    \frac{\im(\psi(t_\h) + \id_\h /2 )}{\ker(\psi(t_\h) - \id_\h /2 )}, \\
    [(\psi(t_\h) - \id_\h /2 )(h)]
    &\longmapsto
    [(\psi(t_\h) + \id_\h /2 )(h)],
    \end{split}
    \end{equation}
    which coincides with \( \theta_\gamma \)
    on the intersection of the domains.
    The subalgebra \( W_Q \) is then given by
    \begin{equation}\label{eq: W_Q}
    W_Q = \set{(x,y) \in C^1_Q \times C^2_Q \mid 
    \theta_Q ([x]) = [y]}.
\end{equation}
\end{itemize}
Conjugating  \( \sigma \) by \( \rho \in \textnormal{Aut}_{\CC - \textnormal{LieAlg}} (\g) \) we extend all statements and constructions given above to an arbitrary finite order automorphism of \( \g \). 

\begin{theorem}[The main classification theorem]\label{thm: classification}
For any classical twist \( t \) of the standard
Lie bialgebra structure \( \delta_0^\sigma \) on
\( \Loop^\sigma \) there is a 
regular equivalence 
\( \phi \) of \( \Loop^\sigma \) and
a BD quadruple
\( Q = \seq{\Gamma_1, \Gamma_2, \gamma, t_\h} \)
such that
\begin{equation}
\delta^\sigma_t \phi 
= 
(\phi \ot \phi) \delta^\sigma_Q.
\end{equation}
Furthermore, if
\( Q' = (\Gamma'_1, \Gamma'_2, \gamma', t'_\h) \)
is another BD
quadruple,
the twisted bialgebra structures
\( \delta^\sigma_Q \) and
\( \delta^\sigma_{Q'} \)
are regularly equivalent if and only if
there is
an automorphism \( \vartheta \)
of the Dynkin diagram
of \( \Loop^\sigma \) 
such that
\( \vartheta(\Gamma_i) = \Gamma'_i \)
for \( i = 1,2 \),
\( \vartheta \gamma \vartheta^{-1} = \gamma' \) and
\( (\vartheta \ot \vartheta)t_\h = t'_\h \),
which we denote by 
\( \vartheta(Q) = Q' \).
\end{theorem}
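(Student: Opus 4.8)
The plan is to transport both assertions to the Belavin--Drinfeld classification of trigonometric \rmatrs and to use the algebro-geometric formalism to bridge the difference between the holomorphic equivalence appearing there and the regular equivalence we need. For the existence assertion I would start from $r_t^\sigma = r_0^\sigma + t$, which is a $\sigma$-trigonometric \rmatr by Theorem~\ref{thm: delta = [, r]}. By Theorem~\ref{thm: r(x,y)=X(u-v)}, after regrading to the principal grading $\sigma_{(\mathbf{1};|\nu|)}$ it becomes a trigonometric \rmatr $X(x/y)$ in the sense of \cite{belavin_drinfeld_solutions_of_CYBE_paper}. The Belavin--Drinfeld classification then supplies a BD quadruple $Q$ whose trigonometric \rmatr is holomorphically equivalent to $X$; regrading back shows that $r_t^\sigma$ is holomorphically equivalent to $r_Q^\sigma$.

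The crux, and the step I expect to be the main obstacle, is to replace this holomorphic equivalence by a regular one. Here I would invoke the geometrization of \cite{TBA}: both $r_t^\sigma$ and $r_Q^\sigma$ arise as geometric \rmatrs from coherent sheaves of Lie algebras $\mathcal{A}_t$ and $\mathcal{A}_Q$ on the nodal Weierstraß cubic whose global sections over the smooth locus $\textnormal{Spec}(O^\sigma) = \CC^*$ both recover $\Loop^\sigma$. Expanded at the smooth point at infinity, a holomorphic equivalence is in particular a formal equivalence of the two geometric \rmatrs, so Theorem~\ref{thm: intro theorem extending equivalence} upgrades it to an isomorphism $\mathcal{A}_t \cong \mathcal{A}_Q$ of sheaves of Lie algebras. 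Applying the global-sections functor over $\CC^*$ produces an element $\phi \in \textnormal{Aut}_{O^\sigma - \textnormal{LieAlg}}(\Loop^\sigma)$, that is, a regular equivalence, with $r_t^\sigma(x,y) = (\phi(x) \ot \phi(y)) r_Q^\sigma(x,y)$. The equivalence of conditions (1) and (2) in Theorem~\ref{thm: Gauge equivalence} then yields the required identity $\delta_t^\sigma \phi = (\phi \ot \phi)\delta_Q^\sigma$. Everything conceptual in this passage from holomorphic to regular equivalence is concentrated in the sheaf-theoretic extension result.

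For the equivalence criterion I would treat the two implications separately. For the \emph{if} direction, an automorphism $\vartheta$ of the Dynkin diagram of $\Loop^\sigma$ with $\vartheta(Q) = Q'$ lifts to a regular equivalence $\phi_\vartheta$ of $\Loop^\sigma$ carrying the Chevalley data indexed by $i$ to that indexed by $\vartheta(i)$; since the standard structure is invariant under diagram symmetries, $\phi_\vartheta$ fixes $\delta_0^\sigma$, and a direct inspection of \eqref{eq: tQ} shows it sends the twist $t_Q^\sigma$ to $t_{Q'}^\sigma$, whence it carries $\delta_Q^\sigma = \delta_0^\sigma + dt_Q^\sigma$ to $\delta_{Q'}^\sigma$. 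For the \emph{only if} direction, a regular equivalence between $\delta_Q^\sigma$ and $\delta_{Q'}^\sigma$ is in particular holomorphic, so by Theorem~\ref{thm: Gauge equivalence} and regrading to the principal grading the associated trigonometric \rmatrs are holomorphically equivalent; the Belavin--Drinfeld classification \cite{belavin_drinfeld_solutions_of_CYBE_paper} asserts that this occurs precisely when the underlying BD quadruples differ by a Dynkin-diagram automorphism $\vartheta$, giving $\vartheta(Q) = Q'$.
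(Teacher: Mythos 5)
Your existence argument follows the paper's proof essentially verbatim: reduce to the Belavin--Drinfeld classification via Theorem \ref{thm: r(x,y)=X(u-v)}, geometrize both $r_t^\sigma$ and $r_Q^\sigma$ using \cite{TBA}, upgrade the holomorphic equivalence to a sheaf isomorphism by the formal extension theorem, take global sections over $\CC^*$, and conclude with Theorem \ref{thm: Gauge equivalence}. The differences lie in the equivalence criterion. For the \emph{only if} direction the paper does not appeal to the uniqueness part of the Belavin--Drinfeld classification; it argues directly that $\phi R_Q \phi^{-1} = R_{Q'}$ forces $\phi(\B_+) = \B_+$ (via the generalized eigenspaces of $R_Q$ for the eigenvalue $0$ and the identification of their normalizers with $\B_+$ using \eqref{eq: R_Q}), so that $\phi$ itself induces the diagram automorphism by Lemma \ref{lem: structure parabolic}. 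Your route through the BD uniqueness statement is plausible, but you should verify that \cite{belavin_drinfeld_solutions_of_CYBE_paper} asserts uniqueness of the quadruple up to diagram automorphism \emph{including} the continuous parameter $t_\h$, and you should be aware that the paper's direct argument buys strictly more: it exhibits $\vartheta$ as induced by the regular equivalence itself (through $\phi(\B_+)=\B_+$), which is exactly what the parabolic classification in Theorem \ref{lem: classification for parabolics} later requires. For the \emph{if} direction, your lift of $\vartheta$ to an automorphism permuting the Chevalley data is correct in spirit but incomplete as a construction of a \emph{regular} equivalence: the naive lift $\phi'$ with $\phi'(z^{\pm s_i}X_i^\pm(1)) = z^{\pm s_{\vartheta(i)}}X^\pm_{\vartheta(i)}(1)$ is a Lie algebra automorphism of $\Loop^\sigma$ but need not be $\CC[z^m,z^{-m}]$-linear; the paper repairs this by noting $\phi'((z^m-1)\Loop^\sigma) = ((z/a)^m-1)\Loop^\sigma$ for some $a\in\CC^*$ and composing with the rescaling $\mu_a(f)(z)=f(az)$. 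Neither point invalidates your plan, but both need to be supplied.
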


We put off the proof of the theorem to Section
\ref{sec: Proof}.
The rest of this section is devoted to various
consequences of
Theorem \ref{thm: classification}
and to the proof of its first part in the
special case \( \g = \mathfrak{sl}(n, \CC) \)
and \( \sigma = \id \).

\subsection{Classification of twists for parabolic subalgebras}
To simplify the notation we again assume \( \sigma = \sigma_{(s; |\nu|)}\). 
The following results can be stated for an arbitrary finite order automorphism by applying conjugation.

Let \( S \subsetneq \Lambda^\sigma \) be a proper
subset of standard generators of \( \Loop^\sigma \).
It is easy to see that
the standard Lie bialgebra structure
\( \delta_0^\sigma \) restricts to both
\( \mathfrak{S}^S \) and
\( \p_{\pm}^S \).
Such induced Lie bialgebra
structures can be defined using modifications of the Manin triple
\eqref{eq: standard Manin triple}.
For example, the Lie bialgebra structure
\( (\p_+^S, \,
\delta_0^\sigma |_{\p^S_+}) \)
is defined by the Manin triple
\begin{equation}
\seq{\seq{\mathfrak{S}^S + \h} \times \Loop^\sigma, \Delta^S, W^S_0},
\end{equation}
where
\( W^S_0 = W_0 \cap ((\mathfrak{S}^S + \h) \times \Loop^\sigma) \)
and
\( \Delta^S = \{ (\pi_S(f), f) \mid f \in \p^S_+ \} \)
for the canonical projection 
\( \pi_S \colon \nobreak \p^S_+ \longrightarrow \nobreak (\mathfrak{S}^S + \h)
=
\p^S_+ / (\p^S_+)^\bot \).
The following theorem gives a classification of
classical twists of the restricted Lie
bialgebra structure
\( \delta_0^\sigma |_{\p_+^S} \)
or, equivalently, classical twists of
\( \delta_0^\sigma \) contained in
\( \p_{+}^S \ot \p_{+}^S  \).

\begin{theorem}[The classification theorem for parabolic subalgebras]\label{lem: classification for parabolics}
For any classical twist
\( t \in \p_{+}^S \ot \, \p_{+}^S \)
of the standard Lie bialgebra structure
\( \delta_0^\sigma \) on \( \Loop^\sigma \)
there exists a regular equivalence \( \phi \)
that restricts to an automorphism of
\( \p_+^S \)
and a BD quadruple \( Q = \seq{\Gamma_1, \Gamma_2, \gamma, t_\h} \) such that
\begin{equation}
\Gamma_1 \subseteq S
\ \text{ and } \
\delta_t^\sigma\phi = (\phi \ot \phi) \delta^\sigma_{Q}.
\end{equation}
Let \( Q' = (\Gamma'_1, \Gamma'_2, \gamma', t'_\h) \),
\( \Gamma'_1 \subseteq S \), be another BD
quadruple.
A regular equivalence between twisted standard structures
\( \delta_Q^\sigma \) 
and \( \delta_{Q'}^\sigma \) restricts to an automorphism of \( \p^S_+ \)
if and only if the induced Dynkin diagram automorphism \( \vartheta \)
preserves \( S \), i.e.
\( \vartheta(S) = S \).
\end{theorem}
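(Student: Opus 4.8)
The plan is to translate the hypothesis $t \in \p^S_+ \ot \p^S_+$ into a condition on the Lagrangian subalgebra $W_t \subseteq \Loop^\sigma \times \Loop^\sigma$ and then feed this through Theorem \ref{thm: classification}. Write $\mathfrak{u} \coloneqq (\p^S_+)^\bot$; this is the span of the positive root spaces $\Loop^\sigma_{(\alpha,\ell)}$, $(\alpha,\ell) \in \Phi^+_\sigma$, that are not supported on $S$, and it satisfies $\mathfrak{u} \subseteq \N_+$ and $\mathfrak{u}^\bot = \p^S_+$. I would first establish the reformulation
\[
t \in \p^S_+ \ot \p^S_+ \iff \mathfrak{u} \times \{0\} \subseteq W_t .
\]
For ``$\Rightarrow$'', the map $\Psi(t)$ annihilates $\mathfrak{u}$ and has image in $\p^S_+$, so $R_t = \pi_\h/2 + \pi_- + \Psi(t)$ vanishes on $\mathfrak{u}$; taking $f = -u$ in \eqref{eq: Wt using Rt} gives $(u,0) \in W_t$ for every $u \in \mathfrak{u}$. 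For ``$\Leftarrow$'', the same description forces $R_t|_{\mathfrak{u}} = 0$, hence $\Psi(t)|_{\mathfrak{u}} = 0$, i.e. $t \in \Loop^\sigma \ot \p^S_+$; combining this with the skew-symmetry of $t$ and the identity $(\p^S_+ \ot \Loop^\sigma) \cap (\Loop^\sigma \ot \p^S_+) = \p^S_+ \ot \p^S_+$ yields the claim. The very same computation applied to \eqref{eq: R_Q} shows that for a quadruple $Q$ one has $\Gamma_1 \subseteq S \iff t^\sigma_Q \in \p^S_+ \ot \p^S_+$, because $\Gamma_1 \subseteq S$ forces $\Phi_1 \subseteq \Span(S)$ and hence $\theta_\gamma(\mathfrak{u}) = 0$.

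For the existence statement I would apply Theorem \ref{thm: classification} to the twist $t$, obtaining a regular equivalence $\phi_1$ and a quadruple $Q_1$ with $\delta^\sigma_t \phi_1 = (\phi_1 \ot \phi_1)\delta^\sigma_{Q_1}$; by Theorem \ref{thm: Gauge equivalence} this means $W_t = (\phi_1 \times \phi_1)W_{Q_1}$, so the reformulation gives $\phi_1^{-1}(\mathfrak{u}) \times \{0\} \subseteq W_{Q_1}$. Reading the ``$(\cdot,0)$''-directions of $W_{Q_1}$ off the explicit shape \eqref{eq: W_Q}, through $C^1_{Q_1}$ and the nilpotent gluing $\theta_{Q_1}$, I would extract the combinatorial fact that the first subset $\Gamma_1$ of $Q_1$ can be moved inside $S$ by a suitable automorphism $\vartheta$ of the Dynkin diagram of $\Loop^\sigma$. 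Setting $Q \coloneqq \vartheta(Q_1)$ we then have $\Gamma_1 \subseteq S$ and, by the previous paragraph, $t^\sigma_Q \in \p^S_+ \ot \p^S_+$; Theorem \ref{thm: classification} also provides a regular equivalence relating $\delta^\sigma_{Q_1}$ and $\delta^\sigma_Q$, whose composition with $\phi_1$ yields a regular equivalence $\phi$ with $\delta^\sigma_t \phi = (\phi \ot \phi)\delta^\sigma_Q$.

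It remains to arrange that $\phi$ restricts to an automorphism of $\p^S_+$, and this is the step I expect to be the \emph{main obstacle}. By the first paragraph both $W_t$ and $W_Q$ contain $\mathfrak{u} \times \{0\}$, whereas the equivalence $\phi$ produced above only guarantees $\phi^{-1}(\mathfrak{u}) \times \{0\} \subseteq W_Q$. Since regular equivalences preserve the form $B$, fixing $\p^S_+$ is equivalent to fixing $\mathfrak{u}$, so the task is to modify $\phi$ on the right by a regular equivalence stabilising $\delta^\sigma_Q$ until it sends $\mathfrak{u}$ to $\mathfrak{u}$. This amounts to a conjugacy statement inside the stabiliser of $\delta^\sigma_Q$ — that the two radical directions $\mathfrak{u}$ and $\phi^{-1}(\mathfrak{u})$ occurring as $(\cdot,0)$-parts of $W_Q$ are conjugate — and here I would invoke the structure theory of $\Loop^\sigma$ from Section \ref{sec: Loop algebras}: the automorphisms of the affine Dynkin diagram are realised by regular equivalences that preserve $\h$ and $\B_+$ and permute the subalgebras $\N^{(\cdot)}_-$, the remaining freedom being by torus conjugations, and all of these respect the triangular decomposition. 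Choosing $\vartheta$ and the stabilising element compatibly, so that the composite respects $\B_+$ and maps $\N^S_-$ to itself, is the delicate point; it is exactly here that the ``appropriate outer automorphism of $\Loop^\sigma$'' enters.

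Finally, for the classification criterion let $\psi$ be a regular equivalence with $\delta^\sigma_Q \psi = (\psi \ot \psi)\delta^\sigma_{Q'}$, where $\Gamma_1, \Gamma'_1 \subseteq S$, and let $\vartheta$ be its induced diagram automorphism, so $\vartheta(Q) = Q'$ by Theorem \ref{thm: classification}. The equivalences relating twisted standard structures preserve $\h$, preserve $\B_+$ (since $\vartheta$ permutes $\Pi^\sigma$ and hence the positive roots), and send $\N^S_-$ onto $\N^{\vartheta(S)}_-$; consequently
\[
\psi(\p^S_+) = \psi(\B_+ \add \N^S_-) = \B_+ \add \N^{\vartheta(S)}_- = \p^{\vartheta(S)}_+ .
\]
Hence $\psi$ restricts to an automorphism of $\p^S_+$ precisely when $\p^{\vartheta(S)}_+ = \p^S_+$, that is when $\vartheta(S) = S$, which is the asserted criterion. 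The one point needing justification is that such $\psi$ genuinely preserves $\h$ and $\B_+$ and acts through $\vartheta$ on the spaces $\N^{(\cdot)}_-$; this follows from the explicit description of the equivalences produced in the proof of Theorem \ref{thm: classification}.
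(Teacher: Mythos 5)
Your reduction of the hypothesis to the condition \(\mathfrak{u}\times\{0\}\subseteq W_t\) (equivalently \(W_t\subseteq\p^S_+\times\Loop^\sigma\), since \(W_t\) is Lagrangian) is correct and is essentially the paper's starting point, and your treatment of the second half of the theorem --- a regular equivalence between \(\delta^\sigma_Q\) and \(\delta^\sigma_{Q'}\) fixes \(\B_+\), acts on standard parabolics through the induced diagram automorphism, hence preserves \(\p^S_+\) iff \(\vartheta(S)=S\) --- is fine. The problem is that the two claims constituting the heart of the first half are asserted rather than proved. First, you say you would ``extract the combinatorial fact'' that \(\Gamma_1\) of \(Q_1\) can be moved into \(S\) by a diagram automorphism by reading the \((\cdot,0)\)-directions of \(W_{Q_1}\) off \eqref{eq: W_Q}. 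But the containment \(\phi_1^{-1}(\mathfrak{u})\times\{0\}\subseteq W_{Q_1}\) only places an a priori shapeless subspace \(\phi_1^{-1}(\mathfrak{u})\) inside the kernel part of \(C^1_{Q_1}\); converting this into a statement relating the subsets \(\Gamma_1\) and \(S\) of \(\Pi^\sigma\) requires knowing that \(\phi_1\) interacts with the root decomposition, which is precisely what must be established. Second, you explicitly flag the step ``arrange that \(\phi\) restricts to an automorphism of \(\p^S_+\)'' as the main obstacle and the conjugacy statement inside the stabiliser of \(\delta^\sigma_Q\) as ``the delicate point'', and you do not carry it out.

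The paper resolves both points at once via Lemma \ref{lem: structure parabolic}, for which your sketch supplies no substitute: (1) a subalgebra containing a coisotropic subalgebra of \(\h\) is \(\ad(\h)\)-stable; (2) a subalgebra containing \(\B_+\) is a standard parabolic \(\p^{S'}_\pm\); (3) an automorphism fixing \(\B_+\) induces a Dynkin diagram automorphism. From \(C^1_{Q'}\subseteq\phi_1(\p^S_+)\) one gets a coisotropic \(\h_1\subseteq\phi_1(\p^S_+)\); then (1) together with self-normalization gives \(\B_+\subseteq\phi_1(\p^S_+)\); then (2) gives \(\phi_1(\p^S_+)=\p^{S'}_+\) with \(\Gamma'_1\subseteq S'\) --- this is the combinatorial fact you wanted to extract. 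The conjugacy theorem for Borel subalgebras of the Levi quotient \(\mathfrak{S}^S+\h\) then produces an inner, hence \(\p^S_+\)-preserving, regular equivalence \(\phi_2\) such that \(\phi_2\phi_1^{-1}\) fixes \(\B_+\) and, by (3), induces \(\vartheta\) with \(\vartheta(S')=S\). Realizing \(\vartheta\) by the regular equivalence \(\phi_3\) from Theorem \ref{thm: classification}, the composite \(\phi_3\phi_1\) simultaneously lands on \(Q=\vartheta(Q')\) with \(\Gamma_1\subseteq S\) and maps \(\p^S_+\) to \(\p^{\vartheta(S')}_+=\p^S_+\), so no separate stabiliser adjustment is needed. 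Without an argument of this kind your proposal remains a plan rather than a proof.
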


\begin{remark}
For a BD quadruple \( Q = \seq{\Gamma_1, \Gamma_2, \gamma, t_\h} \)  with \(\Gamma_1 \subseteq S\), formula \eqref{eq: tQ} directly implies that \(t_Q  \in \p_{+}^S \ot \p_{+}^S \). In particular \(t_Q\) is a twist of \(\delta_0^\sigma |_{\p_+^S}\).
\end{remark}


The proof of Theorem \ref{lem: classification for parabolics}
is based on the following three structural results for \( \Loop^\sigma \).

\begin{lemma}\label{lem: structure parabolic}~
\begin{enumerate}
    \item A subalgebra \(\mathfrak{a}\) of \(\Loop^\sigma \) containing a coisotropic subalgebra \(\mathfrak{h}_1\) of \(\h\) satisfies \([\h,\mathfrak{a}] \subseteq \mathfrak{a}\);
    \item A subalgebra \(\mathfrak{p}\) of \(\Loop^\sigma \) containing \(\B_\pm\) is of the form \(\mathfrak{p}_\pm^{S'}\) for some \(S' \subseteq \Pi^\sigma\);
    \item A mapping
    \(\phi \in \textnormal{Aut}_{\CC - \textnormal{LieAlg}} (\Loop^\sigma)\) fixing \(\B_+\) or \(\B_-\) induces an automorphism of the Dynkin diagram of \(\Loop^\sigma\).
\end{enumerate}
\end{lemma}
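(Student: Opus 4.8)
The plan is to prove the three assertions in turn, each reducing to the root-space combinatorics of $\Loop^\sigma$; throughout I work with $\sigma=\sigma_{(s;|\nu|)}$ and transport to a general finite order automorphism afterwards by the conjugation \eqref{eq: conjugated root space}. For part (1), since $\h_1\subseteq\mathfrak a$ the space $\mathfrak a$ is stable under $\ad(\h_1)$ and hence splits as $\mathfrak a=\bigoplus_{\bar\beta}(\mathfrak a\cap V_{\bar\beta})$, where $V_{\bar\beta}$ denotes the joint $\h_1$-eigenspace of weight $\bar\beta$. I would then isolate the following separation statement as the entire content of the coisotropy hypothesis: \emph{any two roots $(\alpha,k),(\alpha',k')\in\Phi_\sigma$ with $\alpha|_{\h_1}=\alpha'|_{\h_1}$ already satisfy $\alpha=\alpha'$ on all of $\h$.} Indeed, equality of the restrictions means $\dual\alpha-\dual{\alpha'}\in\h_1^{\perp}$, and coisotropy gives $\h_1^{\perp}\subseteq\h_1$, so $\h_1^{\perp}$ is isotropic for $B$; but $\dual\alpha-\dual{\alpha'}$ lies in the real span of the coroots, on which $B$ is positive definite, so an isotropic vector there must vanish. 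Consequently $\ad(h)$ acts on each $V_{\bar\beta}$ by the single scalar $\alpha(h)$, preserves $\mathfrak a\cap V_{\bar\beta}$, and summing over $\bar\beta$ yields $[\h,\mathfrak a]\subseteq\mathfrak a$.

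For part (2) I treat $\mathfrak p\supseteq\B_+$, the case of $\B_-$ being symmetric. As $\h\subseteq\B_+\subseteq\mathfrak p$, the subalgebra $\mathfrak p$ is $\ad(\h)$-stable, hence a sum of $\h$-weight spaces; since the root spaces for $\alpha\neq 0$ are one-dimensional this reads $\mathfrak p=\h\oplus\bigoplus_{(\alpha,k)\in\Psi}\Loop^\sigma_{(\alpha,k)}$ with $\Phi_\sigma^+\subseteq\Psi$, together with a subspace of each imaginary space $z^k\h$. I set $S':=\set{\widetilde\alpha_i\in\Pi^\sigma : \Loop^\sigma_{-\widetilde\alpha_i}\subseteq\mathfrak p}$. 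The inclusion $\p^{S'}_+\subseteq\mathfrak p$ is immediate, since $\B_+$ together with the negative simple spaces indexed by $S'$ generates $\mathfrak S^{S'}$ and hence $\N^{S'}_-$. For the reverse inclusion I would run the standard raising argument: given a negative real root $(\beta,k)\in\Psi$, repeatedly bracket a root vector by suitable simple positive root vectors $e_{\widetilde\alpha_j}\in\N_+\subseteq\mathfrak p$; adding a simple root that keeps the result a root produces a nonzero bracket by the $\mathfrak{sl}_2$-string argument, so one climbs \emph{inside} $\mathfrak p$ up to some $-\widetilde\alpha_i$, forcing every simple root in the support of $(\beta,k)$ into $S'$, i.e. $(\beta,k)\in\N^{S'}_-$. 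The imaginary spaces $z^k\h$ in $\Psi$ are treated in the same spirit (bracketing raises them to real roots), and they can only occur when $S'=\Pi^\sigma$, since a proper subset of an affine simple system spans a finite root subsystem; this pins down $\mathfrak p=\p^{S'}_+$.

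For part (3) assume $\phi(\B_+)=\B_+$. First I would identify $\N_+$ intrinsically as the derived subalgebra $[\B_+,\B_+]$ (the imaginary positive spaces being recovered as $[\Loop^\sigma_{(\alpha,k)},\Loop^\sigma_{(-\alpha,k')}]$ with $k,k'>0$), so that $\phi(\N_+)=\N_+$ and $\phi$ preserves the lower central series of $\N_+$. The crux is to upgrade this to $\phi(\h)=\h$: the image $\phi(\h)$ is another maximal toral subalgebra of $\B_+$, and all such are conjugate by an inner automorphism $\exp(\ad u)$ with $u\in\N_+$; composing $\phi$ with such a map preserves $\B_+$ and acts trivially on $\N_+/[\N_+,\N_+]$, so it leaves the resulting permutation of simple roots unchanged. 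With $\phi(\h)=\h$ in hand, $\phi$ permutes the root spaces through the dual map $(\phi|_\h)^{*}$, carries $\Phi_\sigma^+$ to $\Phi_\sigma^+$ because $\phi(\N_+)=\N_+$, and therefore permutes the simple system $\Pi^\sigma$; as $\phi$ respects the Serre relations, this permutation preserves the Cartan integers and is the asserted automorphism $\vartheta$ of the Dynkin diagram of $\Loop^\sigma$.

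I expect the main obstacle to be the reduction $\phi(\h)=\h$ in part (3): establishing the conjugacy of maximal toral subalgebras of the Borel $\B_+$ in the infinite-dimensional loop setting requires care, as does the parallel bookkeeping of the imaginary root spaces in part (2). By contrast, once the right separation statement is isolated, the coisotropy input in part (1) is a short linear-algebra observation.
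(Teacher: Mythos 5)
Your part (1) is correct and is essentially the paper's argument: the same $\h_1$-weight decomposition and the same separation statement, with the citation of \cite[Lemma X.5.6]{helgason} replaced by the (equally valid) observation that $B$ is positive definite on the real span of the coroots, so an isotropic vector of the form $\dual{\alpha}-\dual{\alpha'}$ must vanish.

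Part (2) has a genuine gap at its very first step. From $\h\subseteq\p$ you may only conclude that $\p$ is a sum of $\ad(\h)$-weight spaces $\Loop^\sigma_{\alpha}=\bigoplus_{k}\Loop^\sigma_{(\alpha,k)}$. The adjoint action of $\h$ does not see the degree $k$, so these weight spaces are infinite-dimensional, and $\p\cap\Loop^\sigma_{\alpha}$ need not be a direct sum of the one-dimensional root spaces $\Loop^\sigma_{(\alpha,k)}$: an element of $\p$ may have nonzero components in several $\Loop^\sigma_{(-\alpha,-k)}$ without any single component lying in $\p$. Your claimed decomposition $\p=\h\oplus\bigoplus_{(\alpha,k)\in\Psi}\Loop^\sigma_{(\alpha,k)}$ therefore does not follow, and the subsequent raising argument is applied to components that are not yet known to belong to $\p$. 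This is exactly the point on which the paper's proof spends most of its effort: it takes $X\in\Loop^\sigma_{-\alpha}\cap\p\cap\N_-$, isolates the component of maximal degree $-j$, and uses brackets with elements of $\B_+$ lying in the imaginary root spaces $\Loop^\sigma_{(0,k)}$ to establish $\bigoplus_{k\geq 0}\Loop^\sigma_{(-\alpha,-j+k)}\subseteq\p$, i.e. \eqref{eq: parabolic is graded in real roots}; only after that does the Kac--Wang-style raising argument (which you reproduce correctly in spirit) go through. Without a step of this kind your proof of (2) is incomplete.

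Part (3) also has a gap precisely where you flag the difficulty. You propose to correct $\phi$ by an inner automorphism $\exp(\ad u)$ with $u\in\N_+$ carrying $\phi(\h)$ back to $\h$. But for a general $u\in\N_+$ the operator $\ad u$ is \emph{not} nilpotent on $\Loop^\sigma$ (already for $u$ in an imaginary root space $\Loop^\sigma_{(0,k)}$, $k>0$, the powers of $\ad u$ keep raising the degree without vanishing), so $\exp(\ad u)$ only makes sense on the formal completion $\widehat{\B}_+$ and is not an automorphism of $\Loop^\sigma$; consequently the conjugacy of maximal toral subalgebras of $\B_+$ is not available in the form you invoke, and it is not clear that $\phi(\h)$ can be moved back to $\h$ at all within $\textnormal{Aut}_{\CC-\textnormal{LieAlg}}(\Loop^\sigma)$. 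The paper sidesteps this entirely: it quotes \cite[Lemma 1.29]{kac_wang}, by which any automorphism of $\Loop^\sigma$ maps $\Pi^\sigma$ to a root basis, and then uses $\phi(\N_+)=[\B_+,\B_+]=\N_+$ to conclude that this root basis consists of positive roots and hence equals $\Pi^\sigma$. If you wish to avoid citing that result, you need a genuine substitute for the conjugacy step rather than the finite-dimensional statement.
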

\begin{proof}
\emph{1.\,:}
We can write
\begin{equation}
\Loop^\sigma
=
\bigoplus_{\alpha' \in \dual{\h_1}} 
\Loop^\sigma_{\alpha'}
=
\bigoplus_{\alpha' \in \dual{\h_1}} 
\bigoplus_{\substack{\alpha \in \dual{\h} \\ 
\alpha |_{\h_1} = \, \alpha'}}
\Loop^\sigma_{\alpha},
\end{equation}
where \(\Loop^\sigma_\alpha = \{ f \in \Loop^\sigma \mid [h, f] = \alpha(h)f \ \forall h \in \h \} = \bigoplus_{k \in \ZZ} \Loop^\sigma_{(\alpha,k)}\) 
for any \(\alpha \in \dual{\h}\) and similarly \(\Loop^\sigma_{\alpha'} = \{ f \in \Loop^\sigma \mid [h, f] = \alpha'(h)f \ \forall h \in \h_1 \}\)
for any \(\alpha' \in \dual{\h_1}\).
Assume there are distinct
\( \alpha_1, \alpha_2 \in \dual{\h}\) such that \(\Loop^\sigma_{\alpha_1},\Loop^\sigma_{\alpha_2} \ne 0\)
and 
\( (\alpha_1 - \alpha_2) |_{\h_1} = 0 \).
Since \( \h_1 \) is coisotropic
inside \( \h \), we have
\( \dual{(\alpha_1 - \alpha_2)} \in \h_1^\bot \subseteq \h_1 \). 
From \cite[Lemma X.5.6]{helgason}
it follows that
\( \alpha_1 - \alpha_2 = 0 \)
which, in its turn, implies that
for any \( \alpha' \in \dual{\h_1}\)
there exists a unique weight
\(\alpha \in \dual{\h} \) such that
\( \Loop^\sigma_{\alpha'} = \Loop^\sigma_{\alpha}\).
This observation combined with \([\h_1,\mathfrak{a}] \subseteq \mathfrak{a}\) allows us to write (see \cite[Proposition 1.5]{kac_book})
\begin{equation}
    \mathfrak{a}
    =
    \bigoplus_{\alpha' \in \dual{\h_1}} \Loop^\sigma_{\alpha'} \cap \mathfrak{a}
    =
    \bigoplus_{\alpha \in \dual{\h}} \Loop^\sigma_{\alpha} \cap \mathfrak{a},
\end{equation}
implying \([\h,\mathfrak{a}] \subseteq \mathfrak{a}\).

\emph{2.\,:}
Without loss of generality assume that \( \p \) contains \( \B_+ \). The inclusion
\(\h \subseteq \mathfrak{p}\) 
and
\cite[Proposition 1.5]{kac_book}
imply that
\begin{equation}
\p = \bigoplus_{\alpha \in \dual{\h}} \Loop^\sigma_{\alpha} \cap \p.
\end{equation}
Take \(X \in \Loop^\sigma_{-\alpha} \cap \mathfrak{p} \cap \N_-\) for some \(\alpha \ne 0\). Let \(j\) be the maximal non-negative integer such that the \(\Loop_{(-\alpha,-j)}\)-component of \(X\) is non-zero.
The structure theory of \( \Loop^\sigma \) implies
\( \dim(\Loop_{(-\alpha,-j)}) = 1 \).
Assume that
\( (-\alpha,-j+k) \) is a root
for some positive integer \( k \).
Decomposing the difference of \( (-\alpha,-j) \) and
\( (-\alpha,-j+k) \) into the sum of simple roots we get a relation of the form
\( \sum_{i=0}^n c_i \widetilde{\alpha}_i = (0, k) \).
Then the identity 
\( \sum_{i=0}^n c_i \alpha_i = 0 \) and \eqref{eq:defa_i} imply that \(k\) is an integer multiple of \(\sum_{i = 0}^n a_is_i\). Using \cite[Theorem 5.6.b)]{kac_book} we see that \( (0,k) \) is a root.
Applying \cite[Lemma X.5.5'.(iii)]{helgason} iteratively we see that
\begin{equation}\label{eq: parabolic is graded in real roots}
    \bigoplus_{k \geq 0 }\Loop_{(-\alpha,-j+k)} \subseteq \mathfrak{p}.
\end{equation}

Following the proof of \cite[Lemma 1.5]{kac_wang} we now show that \(\mathfrak{p} = \mathfrak{p}_+^{S'}\), where
\begin{equation}
S' = \{(\alpha_i,s_i)\in\Pi^\sigma \mid \Loop_{(-\alpha_i,-s_i)} \subseteq \mathfrak{p}\}.
\end{equation}
Assume the claim is false.
Let \((-\gamma,-\ell) \notin \Span_{\ZZ}(S') \) be a negative root of maximal height such that there exists an element \(Y \in
\Loop^\sigma_{-\gamma} \cap \mathfrak{p} \cap \N_- \)
with a non-zero \(\Loop_{(-\gamma,-\ell)}\)-component \(Y_{-\ell}\).
Then there exists \((\alpha_j,s_j)\in \Pi^\sigma \setminus S'\)
such that \([X^+_j, Y_{-\ell}] \ne 0\)
and \((-\gamma+\alpha_j,-\ell + s_j) \in \Span_{\ZZ}(S') \),
where \( X_j^+ \) is the standard
generator of \( \Loop^\sigma \). Note that equation \eqref{eq: parabolic is graded in real roots} implies \(\gamma \ne \alpha_j\).
By the structure theory of loop algebras
we can find
\( Z \in \Loop^\sigma_{(\gamma-\alpha_j,\ell - s_j)} \subseteq \B_+ \subseteq \p\)
such that
\begin{equation}
B([X_j^+, Y_{-\ell}], Z) \neq 0.
\end{equation}
The invariance of the form \( B \) then gives
\( 0 \neq [Y_{-\ell}, Z] \in  \Loop^\sigma_{(-\alpha_j, -s_j)} \).
Applying formula \eqref{eq: parabolic is graded in real roots}
to \( X = [Y,Z] \in \p\) we get
\( (\alpha_j, s_j) \in S' \) contradicting
our choice of \( (\alpha_j, s_j) \).

\emph{3.\,:} Assume that \( \phi(\B_+) = \B_+ \). Since \( \N_+ = [\B_+, \B_+] \), we see that \(\phi\) also fixes \( \N_+ \) and \( \h \).
By \cite[Lemma 1.29]{kac_wang} the automorphism \( \phi \) maps \(\Pi^\sigma\) to a root basis. But since \(\phi\) fixes \(\N_+\), this root basis cosists of positive roots and the only root basis in the set of positive roots is \(\Pi^\sigma\). Hence \(\phi(\Pi^\sigma) = \Pi^\sigma\) and thus \(\phi\)
induces an automorphism \( \vartheta \) of the Dynkin diagram of \( \Loop^\sigma \).
\end{proof}

\begin{proof}[Proof of Theorem \ref{lem: classification for parabolics}]
We prove the statement for \( \sigma = \sigma_{(s;|\nu|)} \). The general result it obtained using conjugation.
By Theorem \ref{thm: classification}
there is a regular equivalence 
\( \phi_1 \) on \(\Loop^\sigma \) and
a BD quadruple 
\( Q' = (\Gamma'_1, \Gamma'_2, \gamma', t'_\h) \)
such that
\( (\phi_1 \times \phi_1) W_t = W_{Q'} \).
Since \( t \in \p_{+}^S \ot \, \p_{+}^S  \) we have
\( W_t \subseteq \p^S_+ \times \Loop^\sigma \).
Let \( \h_1 \subseteq \h \)
be the image of \( \psi(t_\h) - \id_\h /2 \). Since \(t_\h\) is skew-symmetric, this is easily seen to be a coisotropic subspace of \(\h\).
Then
\begin{equation}\label{eq: C1 subset phi(p)}
C^1_{Q} = \N_+ \add \h_1 \add \N^{\Gamma_1}_- 
\subseteq \phi_1(\p^{S}_+)
\end{equation}
and, in particular,
we have the inclusion
\( \h_1
\subseteq  \phi_1(\p^S_+) \).
By the first part of Lemma \ref{lem: structure parabolic}
we have
\begin{equation}\label{eq: inclusion h}
[\h, \phi_1(\p^S_+)] \subseteq \phi_1(\p^S_+).
\end{equation}
Since \(\p^S_+\) is self-normalizing, \( \phi_1(\p^S_+) \) is self-normalizing as well. Therefore we get \( \h \subseteq \phi_1(\p^S_+) \) and consequently
\( \B_+ \subseteq \phi_1(\p^S_+) \).
Then the second statement of Lemma
\ref{lem: structure parabolic}
shows that 
\( \phi_1(\p^S_+) = \p^{S'}_+ \)
for some \( S' \subsetneq \Pi^\sigma \).
The inclusion
\ref{eq: C1 subset phi(p)} implies that
\( \Gamma_1 \subseteq S' \).

Define \( \B' \coloneqq \phi_1^{-1} (\B_+) \).
The subalgebra \( \B' / \p^{S, \bot}_+ \), being the preimage of the Borel subalgebra \( \B_+/\p^{S', \bot}_+ \) of \( \mathfrak{S}^{S'} + \h = \p^{S'}_+/\p^{S', \bot}_+\) under \( \phi_1 \), is a Borel subalgebra of
\( \mathfrak{S}^S + \h  = \p^{S}_+/\p^{S, \bot}_+\). Therefore,
by the conjugacy theorem for Borel subalgebras, there exists an inner automorphism \( \phi_2 \) of \( \mathfrak{S}^S + \h \) mapping \( \B' / \p^{S, \bot}_+ \) to \( \B_+ / \p^{S, \bot}_+ \).
It can be seen from \cite[Lemma X.5.5]{helgason} that \(\textnormal{ad}_x\) is nilpotent on \(\Loop^\sigma\) for any \(x\in \Loop_{(\alpha,k)}^\sigma\) and \(\alpha \ne 0\).
Combining this result with the equality
\begin{align*}
\textnormal{Inn}_{\CC - \textnormal{LieAlg}}(\mathfrak{S}^S + \h) 
&=
\langle
e^{\ad_x} \mid 
x \in \Loop^\sigma_{(\alpha, k)},
(\alpha, k) \in \Phi_\sigma \cap \textnormal{span}_{\ZZ}(S),
\alpha \neq 0
\rangle 
\end{align*}
(see \cite[§3.2]{bourbaki_lieIII}),
we can view \( \phi_2 \) as a regular equivalence
on \( \Loop^\sigma \) that restricts
to an automorphism of
\( \p^S_+ \) and maps
\( \B' \) to \( \B_+ \).
The composition \( \phi^{~}_2 \phi_1^{-1 } \) is then an automorphism of \( \Loop^\sigma \) mapping \( \p^{S'}_+ \) to \( \p^S_+ \) and fixing the Borel subalgebra \( \B_+ \).
The third part of Lemma
\ref{lem: structure parabolic}
implies that \( \phi^{~}_2 \phi_1^{-1 } \)
induces an automorphism \( \vartheta \) of the Dynkin diagram of \( \Loop^\sigma \) such that
\( \vartheta(S') = S \).
Applying the second part of Theorem \ref{thm: classification} to \( \vartheta \) we obtain a regular equivalence \( \phi_3 \) such that 
\( (\phi_3 \times \phi_3)W_{Q'} = W_{Q := \vartheta(Q')} \).
The composition \( \phi := \phi_3 \phi_1 \) and the
quadruple \( Q \) satisfy all the requirements of the
theorem.
\end{proof}

\subsection{Quasi-trigonometric solutions of CYBE}
Letting \( \sigma = \id \) and
\( S = \Pi \setminus \{ (\alpha_0, 1) \} \)
the corresponding parabolic subalgebra
\( \p^S_+\) becomes \( \g[z] \).
The solutions to CYBE of the form \( r_t = r_0 + t \),
where \( t \in \g[z]^{\ot2} \),
are called \emph{quasi-trigonometric}.
Two quasi-trigonometric solutions \( r_t \) and
\( r_s \) are called \emph{polynomially equivalent} if
there exists a \( \phi \in \textnormal{Aut}_{\CC[z] - \textnormal{LieAlg}}(\g[z]) \)
such that 
\begin{equation}
    r_s(x,y) = (\phi(x) \ot \phi(y))r_t(x,y)
    \qquad
    \forall x, y \in \CC^*, \ x \neq y.
\end{equation}
Therefore, a polynomial equivalence is a regular equivalence that restricts to an automorphism of \(\g[z]\).
Quasi-trigonometric \rmatrs
were introduced and classified up to polynomial equivalence and choice of a maximal order in \cite{KPSST, pop_stolin}.
More precisely, it was shown that
quasi-trigonometric solutions are in one-to-one correspondence with certain Lagrangian subalgebras of
\( \g \times \g(\!(z^{-1})\!) \). 
Embedding the Lagrangian subalgebra, corresponding to
a quasi-trigonometric solution \( r \),
into some maximal order of
\( \g \times \g(\!(z^{-1})\!) \), the
authors of \cite{KPSST, pop_stolin} obtained a unique quasi-trigonometric solution \( r_Q \) (given by a
BD quadruple \( Q \))
polynomially equivalent to \( r \).
In this setting we get the following results:
\begin{itemize}
\item The classification theorem for parabolic subalgebras
\ref{lem: classification for parabolics} together with
Theorem \ref{thm: delta = [, r]} gives a new
proof of the above-mentioned classification of quasi-trigonometric \rmatrs;

\item In general, a maximal order in which one can embed the Lagrangian
subalgebra correspoding to a quasi-trigonometric
solution \( r \) is not unique. Choosing two
different maximal orders we get
two different BD quadruples \( Q \) and \( Q' \) and
two polynomially equivalent quasi-trigonometric
\rmatrs \( r_Q \) and \( r_{Q'} \).
By Theorem \ref{lem: classification for parabolics} this
equivalence induces an automorphism \( \vartheta \)
of the Dynking diagram of \( \Loop \) 
that fixes the minimal root, i.e.
\( \vartheta(\widetilde{\alpha}_0) = \widetilde{\alpha}_0 \). Therefore, any quasi-trigonometric
solution is polynomially equivalent to exactly one
quasi-trigonometric \rmatr \( r_Q \), for
some BD quadruple \( Q \), if and only if
\( \g \) is of type \( A_1, B_n, C_n, F_4, G_2\) or \( E_8 \).

We note that there exist regularly equivalent quasi-trigonometric solutions 
\( r_Q \) and \( r_{Q'} \) which are not polynomially equivalent (see Figure \ref{fig: regular_butnot_poly}). Therefore regular equivalence
is strictly weaker than polynomial one;

\item It was shown in
\cite{KPSST}
that for any quasi-trigonometric \rmatr \( r \)
there exists a holomorphic
function \( \phi \colon \CC \longrightarrow \textnormal{Inn}_{\CC - \textnormal{LieAlg}}(\g)\) such that 
\begin{align*}
(\phi(x)^{-1} \ot \phi(y)^{-1})r(x,y) = X(x/y),
\end{align*}
where \( X \) is a trigonometric solution
in the Belavin-Drinfeld classification \cite{belavin_drinfeld_solutions_of_CYBE_paper}.
Combining Lemma \ref{thm: GxG r = r} and
Theorem \ref{lem: classification for parabolics}
we get a general version of this statement with more control over the holomorphic equivalence.
Precisely,
the trigonometric \rmatr
\( r^{\sigma_{(\mathbf{1}; |\nu|)}}_Q \),
by definition,
always depends on the quotient of its parameters; in order to obtain it from a \( \sigma \)-trigonometric
\rmatr \( r^\sigma_t \), where the coset of \(\sigma\) is conjugate to \( \nu \textnormal{Inn}_{\CC - \textnormal{LieAlg}}(\g)  \),
it is enough to apply
a regular equivalence composed with the regrading
to the principal grading: 
\begin{equation*}
      r^\sigma_t(x,y) \xmapsto{\text{regular eq.}} r^\sigma_Q(x,y) \xmapsto{\text{regrading}} r_Q^{\sigma_{(\mathbf{1};|\nu|)}}(x,y) = X(x/y);
  \end{equation*}

\item Conjecture 1 in \cite{burban_galinat_stolin} is justified: Combining \eqref{eq: r 0 sigma} with \eqref{eq: tQ}
we get the explicit formula for a quasi-trigonmetric
solution \( r_Q \) given by a BD quadruple \(Q\),
namely
\begin{equation}\label{eq: quasi-trig explicit formula}
\begin{split}
    r_Q(x,y) &= \frac{yC}{x-y} +
    \frac{C_\h}{2} +
    C_- +
    t_\h +
\sum_{\widetilde{\alpha} \in \Phi^+_{1}}
\sum_{j = 1}^{\infty} b_{-\widetilde{\alpha}} \wedge
\theta^j_\gamma \seq{b_{\widetilde{\alpha}}} \\
&= - \frac{1}{2} \seq{\frac{y+x}{y-x}C +
    \sum_{\widetilde{\alpha} \in \Phi^+} b_{\widetilde{\alpha}} \wedge b_{-\widetilde{\alpha}} -
    t_\h +
\sum_{\widetilde{\alpha} \in \Phi^+_{1}}
\sum_{j = 1}^{\infty} \theta^j_\gamma \seq{b_{\widetilde{\alpha}}} \wedge
b_{-\widetilde{\alpha}}}.
\end{split}
\end{equation}
This formula (up to a sign) coincides with the one
conjectured by Burban, Galinat and Stolin in
\cite{burban_galinat_stolin};

\item Question 2 in \cite{burban_galinat_stolin} is answered:
Let \( Q \) be a BD quadruple and 
\( h \coloneqq |\sigma_{(\mathbf{1}; 1)}| \).
The relation between the quasi-trigonmetric
solution \eqref{eq: quasi-trig explicit formula} and the trigonometric solution
\begin{equation}
\begin{split}
  X(u-v) = t_\h + \frac{C_\h}{2} + 
  \frac{1}{e^{u-v} - 1}
  \sum_{k=0}^{h - 1}
  e^{\frac{k(u-v)}{h}}C^{\sigma_{(\mathbf{1}; 1)}}_k 
  &+ \sum_{\substack{\widetilde{\alpha} \in \Phi_1^+ \\ \widetilde{\alpha}=(\alpha,k)}} \sum_{j = 1}^{\infty}
  e^{\frac{k(u-v)}{h}} \theta^j_\gamma \seq{b_{\widetilde{\alpha}}}(1) \ot
  b_{-\widetilde{\alpha}}(1)
 \\
&- \sum_{\substack{\widetilde{\alpha} \in \Phi_1^+ \\ \widetilde{\alpha}=(\alpha,k)}} \sum_{j = 1}^{\infty}
e^{\frac{k(v-u)}{h}}
  b_{-\widetilde{\alpha}}(1) \ot
\theta^j_\gamma \seq{b_{\widetilde{\alpha}}}(1),
\end{split}
\end{equation}
given by the same quadruple Q (see \cite{belavin_drinfeld_solutions_of_CYBE_paper}), is described by regrading
from \( \id \) to the Coxeter automorphism \( \sigma_{(\mathbf{1}; 1)} \) using Lemma \ref{thm: GxG r = r}. More precisely,
\begin{equation}
  \seq{e^{u \, \ad(\mu)} \ot e^{v \, \ad(\mu)} }
  r_Q \seq{e^{u}, e^{v}} = r^{\sigma_{(\mathbf{1}; 1)}}_Q(e^{u/h}, e^{v/h}) =
  X(u-v).
\end{equation}
\end{itemize}

\begin{figure}[h]
    \centering
    \includegraphics[scale = 0.12]{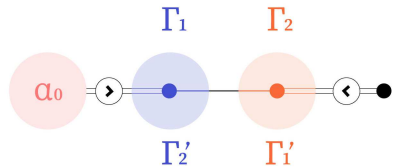}
    \caption{\( \Gamma_1, \Gamma_1', \Gamma_2  \) and 
    \( \Gamma_2' \) leading to regularly but not polynomially equivalent \rmatrs \( r_Q \) and \( r_{Q'} \).}
    \label{fig: regular_butnot_poly}
\end{figure}

\subsection{Special case \( \g = \mathfrak{sl}(n,\CC) \) and \( \sigma = \id \)}
\label{sec: sl(n,C) case}
The classification of classical twists
of the standard Lie bialgebra structure
\( \delta_0 := \delta^{\id}_0 \) on
\( \Loop = \g[z,z^{-1}] \) with
\( \g = \mathfrak{sl}(n, \CC) \)
can be done without heavy geometric
machinery. More precisely, using the theory
of maximal orders developed in
\cite{stolin_sln},
we can show that the equivalence classes
of twisted standard bialgebra structures on \( \Loop \) are in one-to-one
correspondence with the equivalence classes of quasi-trigonometric
\rmatrs, which were classified in
\cite{KPSST}
in terms of BD quadruples.
 
The following lemma explains the way in which orders
emerge in our work.

\begin{lemma}\label{lem: trigonom solutions <-> Lagrangian sub}
  Classical twists \( t \) of the standard
  Lie bialgebra structure \( \delta_0 \)
  are in one-to-one correspondence
  with Lagrangian Lie subalgebras 
  \( \widehat{W}_t \subseteq \g(\!(z)\!) \times \g(\!(z^{-1})\!) \)
  satisfying the conditions:
  \begin{enumerate}
    \item \( \Delta \add \widehat{W}_t = \g(\!(z)\!) \times \g(\!(z^{-1})\!) \);
    \item There are non-negative integers \( N \)
    and \( M \) such that
    \begin{align*}
      z^N \g[\![z]\!]
      &\subseteq \pi_{1} \widehat{W}_t \subseteq 
      z^{-N}\g[\![z]\!], \\
      z^{-M}\g[\![z^{-1}]\!]
      &\subseteq \pi_{2} \widehat{W}_t \subseteq 
      z^M \g[\![z^{-1}]\!],
    \end{align*}
    where \( \pi_{1} \) and \( \pi_{2} \)
    are the projections of
    \( \g(\!(z)\!) \times \g(\!(z^{-1})\!) \) onto its components
    \( \g(\!(z)\!) \) and \( \g(\!(z^{-1})\!) \) respectively.
  \end{enumerate}
\end{lemma}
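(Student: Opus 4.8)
The plan is to deduce the statement from the abstract correspondence of Theorem \ref{thm: W <-> t <-> T correspondence}, applied not to the standard Manin triple \eqref{eq: standard Manin triple} but to the ``geometric'' Manin triple of Remark \ref{rem: large double}. Specializing the latter to $\sigma = \id$ (so that $m = 1$, $\widehat{O}^{\id}_+ = \CC(\!(z)\!)$ and $\widehat{O}^{\id}_- = \CC(\!(z^{-1})\!)$) gives $\widehat{\Loop}^{\id}_+ = \g(\!(z)\!)$ and $\widehat{\Loop}^{\id}_- = \g(\!(z^{-1})\!)$, and the triple $(\g(\!(z)\!) \times \g(\!(z^{-1})\!), \Delta, \widehat{W}_0)$ defines $\delta_0$. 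Theorem \ref{thm: W <-> t <-> T correspondence} then yields a bijection between classical twists $t$ of $\delta_0$ and Lagrangian subalgebras $\widehat{W}_t \subseteq \g(\!(z)\!) \times \g(\!(z^{-1})\!)$ that are complementary to $\Delta$ and commensurable with $\widehat{W}_0$. The complementarity is precisely condition 1, so it remains only to show that, for a Lagrangian subalgebra complementary to $\Delta$, commensurability with $\widehat{W}_0$ is equivalent to condition 2.

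First I would record the elementary properties of $\widehat{W}_0$. For $\sigma = \id$ the subalgebras $\widehat{\B}_\pm$ are the two standard Iwahori subalgebras, sandwiched as $z\g[\![z]\!] \subseteq \widehat{\B}_+ \subseteq \g[\![z]\!]$ and $z^{-1}\g[\![z^{-1}]\!] \subseteq \widehat{\B}_- \subseteq \g[\![z^{-1}]\!]$. Since the diagonal $\h$-condition defining $\widehat{W}_0$ holds automatically once both Cartan components vanish, one checks $z^N\g[\![z]\!] \times z^{-M}\g[\![z^{-1}]\!] \subseteq \widehat{W}_0$ for all $N,M \geq 1$, while $\widehat{W}_0 \subseteq \g[\![z]\!] \times \g[\![z^{-1}]\!] =: L_0$. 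In particular $\widehat{W}_0$ is finite-codimensional in $L_0$, hence commensurable with it; as commensurability is transitive, commensurability of $\widehat{W}_t$ with $\widehat{W}_0$ is the same as commensurability with the product lattice $L_0$.

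Next comes the core equivalence. The easy direction is that commensurability with $L_0$ forces each projection $\pi_i \widehat{W}_t$ to be commensurable with the corresponding standard lattice, which is exactly the sandwiching in condition 2. The converse --- recovering commensurability of the whole subspace from bounds on its projections alone --- is the main obstacle, since projections generally destroy this information. Here I would use the Lagrangian property decisively. Writing $V := \pi_1\widehat{W}_t$ and $V^0 := \{ f \mid (f,0) \in \widehat{W}_t \}$, the maximal isotropy of $\widehat{W}_t$ with respect to $\mathcal{B}$ gives $V^0 = V^{\perp}$, the orthogonal complement of $V$ under the nondegenerate residue pairing $B_1(f,g) = \textnormal{res}_{z=0}[\tfrac{1}{z}\K(f,g)]$ on $\g(\!(z)\!)$: indeed, if $B_1(g, V) = 0$ then $(g,0)$ is $\mathcal{B}$-orthogonal to all of $\widehat{W}_t$ and hence lies in $\widehat{W}_t = \widehat{W}_t^\perp$. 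Since condition 2 makes $V$ a lattice and the perp of a lattice is again a lattice, $V^0$ is a lattice as well; the same argument in the second factor produces a lattice $V^0_2 := \{ f \mid (0,f) \in \widehat{W}_t \}$. One then has $V^0 \times V^0_2 \subseteq \widehat{W}_t \subseteq \pi_1\widehat{W}_t \times \pi_2\widehat{W}_t$, sandwiching $\widehat{W}_t$ between two product lattices commensurable with $L_0$, which yields commensurability with $L_0$ and closes the argument.

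I expect the converse implication to be where the real work lies: the identification $V^0 = V^\perp$ and the stability of the lattice property under taking orthogonal complements are the crucial inputs, and both rest on the Lagrangian hypothesis together with the nondegeneracy of the residue pairing. The remaining verifications --- that $\widehat{W}_0$ has the claimed sandwich, and that projections of a lattice-commensurable subspace are lattices --- are routine.
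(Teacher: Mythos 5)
Your overall route is the same as the paper's: both reduce the lemma to Theorem \ref{thm: W <-> t <-> T correspondence} applied to the Manin triple of Remark \ref{rem: large double}, so that everything comes down to showing that, for a Lagrangian complement of \(\Delta\), commensurability with \(\widehat{W}_0\) is equivalent to condition 2. The only real divergence is in the bookkeeping for that equivalence: the paper phrases commensurability as finiteness of \(\im(T)\) for \(T=\psi(t)\colon\widehat{W}_0\to\Delta\) and reads off the upper inclusions from \(\widehat{W}_t=\{Tw-w\}\), whereas you replace \(\widehat{W}_0\) by the product lattice \(L_0=\g[\![z]\!]\times\g[\![z^{-1}]\!]\) and sandwich \(\widehat{W}_t\) between \(V^0\times V^0_2\) and \(\pi_1\widehat{W}_t\times\pi_2\widehat{W}_t\). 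Your converse direction, via \(V^0=V^\perp\) and stability of lattices under the residue-pairing perp, is correct and is in substance the same use of the Lagrangian hypothesis that the paper makes.

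One point needs repair: in your ``easy direction'' you assert that commensurability of \(\pi_1\widehat{W}_t\) with \(\g[\![z]\!]\) ``is exactly the sandwiching in condition 2.'' That is not true for a general subspace: commensurability gives the upper inclusion \(\pi_1\widehat{W}_t\subseteq z^{-N}\g[\![z]\!]\) (a finite-dimensional complement of \(\pi_1\widehat{W}_t\cap\g[\![z]\!]\) inside \(\g(\!(z)\!)\) has bounded pole order), but a finite-codimension subspace of \(\g[\![z]\!]\) need not contain any \(z^N\g[\![z]\!]\), so the lower inclusion does not follow from commensurability alone. It does follow from the Lagrangian property: \((\pi_1\widehat{W}_t)^\perp=V^0\subseteq\pi_1\widehat{W}_t\) together with \(\bigl(z^{-N}\g[\![z]\!]\bigr)^\perp=z^{N+1}\g[\![z]\!]\) yields \(z^{N+1}\g[\![z]\!]\subseteq\pi_1\widehat{W}_t\). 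This is exactly the step the paper covers with the phrase ``Since \(\widehat{W}_t\) is Lagrangian,'' and you already have the needed identity \(V^0=V^\perp\) in your converse argument, so the fix is one sentence; but as written the forward implication is incomplete.
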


\begin{proof}
By Remark
\ref{rem: large double}
the standard Lie bialgebra structure
\( \delta_0 \) on \( \Loop \)
is defined by the Manin triple
\begin{equation}
(\g(\!(z)\!) \times  \g(\!(z^{-1})\!),\Delta,
\widehat{W}_0).
\end{equation}
Therefore, in view of
Theorem \ref{thm: W <-> t <-> T correspondence} and its proof,
it is enough to show that condition \emph{2.} corresponds to
the commensurability condition on
\( \widehat{W}_t \) and
\( \widehat{W}_\textnormal{0} \),
or equivalently,
to finite dimensionality of the image of the map
\( T = \psi(t) \colon \widehat{W}_\textnormal{0} 
\longrightarrow \Delta \).
The latter correspondence is justified by the following chain of arguments:
the condition \( \dim(\im(T)) < \infty \)
is equivalent to the inclusion
\begin{equation}\label{eq: im(T) subset Delta}
  \im(T)
  \subseteq
  \set{\seq{\sum_{k = -N}^{M} a_k z^k, \sum_{k = -N}^{M} a_k z^k} \bigg| \,
  a_k \in \g},
\end{equation}
for some non-negative integers \( N \) and \( M \),
which in its turn
is equivalent to
\begin{equation}\label{eq: projections < t^-N g[[t]]}
\begin{split}
  \pi_1 \widehat{W}_t 
  &\subseteq 
  z^{-N}\g[\![z]\!], \\
  \pi_{2} \widehat{W}_t
  &\subseteq 
  z^M \g[\![z^{-1}]\!];
\end{split}
\end{equation}
Since \( \widehat{W}_t \) is Lagrangian, inclusions
\eqref{eq: projections < t^-N g[[t]]} are equivalent to
condition \emph{2.} of the theorem. 
\end{proof}

A subalgebra
\( W \subseteq \g(\!(u)\!) \)
is called an \emph{order} if
there is a non-negative integer \( N \) such that
\begin{equation}
u^N \g[\![u]\!] \subseteq W \subseteq u^{-N}\g[\![u]\!].
\end{equation}
Therefore, condition \emph{2.} of Lemma
\ref{lem: trigonom solutions <-> Lagrangian sub}
means that the projections \( \pi_{1}\widehat{W}_t \)
and \( \pi_2 \widehat{W}_t \) are orders.

The following two results from
\cite{stolin_sln}
play the key role in the classification
of classical twists of
\( \delta_0 \).

\begin{theorem}\label{thm: orders in sl(n,C((t^-1)))}
For any order \( W \)
in \( \mathfrak{sl}(n, \CC(\!(u^{-1})\!)) \)
there is a matrix
\( A \in \textnormal{GL}(n, \CC(\!(u^{-1})\!)) \)
such that
\( W \subseteq A^{-1} \mathfrak{sl}(n, \CC[\![u^{-1}]\!]) A \).
In particular, any maximal order must be of the form
\( A^{-1} \mathfrak{sl}(n, \CC[\![u^{-1}]\!]) A \)
for some \( A \in \textnormal{GL}(n, \CC(\!(u^{-1})\!)) \).
\end{theorem}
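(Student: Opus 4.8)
The plan is to convert this statement about Lie-algebra orders into one about invariant lattices, where the conjugating matrix \( A \) appears by construction. Set \( O := \CC[\![u^{-1}]\!] \), the valuation ring of \( \CC(\!(u^{-1})\!) \) with uniformiser \( u^{-1} \), and recall that an \( O \)-lattice \( L \subseteq \CC(\!(u^{-1})\!)^n \) is a finitely generated \( O \)-submodule of full rank; each such \( L \) is free of rank \( n \), so \( L = A^{-1}O^n \) for some \( A \in \textnormal{GL}(n, \CC(\!(u^{-1})\!)) \). Since scalars are central, \( \textnormal{End}_O(L) = A^{-1}\mathfrak{gl}(n, O)A \), whence
\begin{equation*}
A^{-1}\mathfrak{sl}(n, \CC[\![u^{-1}]\!])A = \set{X \in \mathfrak{sl}(n, \CC(\!(u^{-1})\!)) \mid XL \subseteq L}.
\end{equation*}
Thus \( W \subseteq A^{-1}\mathfrak{sl}(n, \CC[\![u^{-1}]\!])A \) if and only if \( W \) stabilises \( L \). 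As any two lattices are commensurable, the stabiliser of \( L \) is commensurable with that of \( O^n \), hence is itself an order. The theorem therefore reduces to the single claim that \emph{every order \( W \) stabilises some \( O \)-lattice}: the first assertion is then immediate, and if \( W \) is maximal it is contained in the order \( A^{-1}\mathfrak{sl}(n, \CC[\![u^{-1}]\!])A \) and hence equals it, giving the second.

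To build the lattice I would take the smallest \( W \)-stable \( O \)-submodule containing the standard one,
\begin{equation*}
L := \sum_{k \geq 0} \underbrace{W \cdots W}_{k} \cdot O^n,
\end{equation*}
with the \( k=0 \) term equal to \( O^n \). Because matrix entries commute with the scalars in \( O \), each summand is an \( O \)-module, so \( L \) is an \( O \)-submodule; it contains \( O^n \) and is \( W \)-stable by construction. Everything now reduces to showing that \( L \) is \emph{bounded}, i.e. \( L \subseteq u^{N'}O^n \) for some \( N' \); a bounded \( O \)-submodule of a finite free module over the Noetherian ring \( O \) is finitely generated, hence a genuine lattice.

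Boundedness is the crux and the step I expect to be the main obstacle. Write \( v \) for the \( u^{-1} \)-adic valuation (so \( v(u^{-1}) = 1 \)) and, for \( d \in \ZZ \), let \( \set{v \geq d} \) denote the traceless matrices all of whose entries have valuation \( \geq d \); the order condition reads \( \set{v \geq N} \subseteq W \subseteq \set{v \geq -N} \). The naive estimate fails: an element of \( W \subseteq \set{v \geq -N} \) can lower the valuation of a vector by up to \( N \), giving only \( \underbrace{W \cdots W}_{k} O^n \subseteq u^{kN}O^n \), unbounded in \( k \). The resolution is that Lie-closedness together with the \emph{upper} bound forces the valuation-lowering part of \( W \) to be nilpotent. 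Precisely, let \( X \in W \) have valuation \( -M < 0 \) and leading matrix \( \bar X \in \mathfrak{sl}(n, \CC) \); for any \( \bar Y \in \mathfrak{sl}(n, \CC) \) the element \( u^{-N}\bar Y \) lies in \( \set{v \geq N} \subseteq W \), so the iterated brackets \( \ad(X)^k(u^{-N}\bar Y) \in W \) have leading matrix \( \ad(\bar X)^k \bar Y \) at valuation \( N - kM \). As \( W \subseteq \set{v \geq -N} \), this leading term must vanish once \( N - kM < -N \); hence \( \ad(\bar X)^k = 0 \) for \( k > 2N/M \), so \( \bar X \) is a nilpotent matrix. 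Consequently the leading matrices of all negative-valuation elements of \( W \) are nilpotent, and by an Engel-type argument the associated graded \( \textnormal{gr}(W) \) acts on \( \textnormal{gr}(\CC(\!(u^{-1})\!)^n) \) through a common flag; lifting this back through the valuation filtration confines the orbit \( L \) to finitely many filtration steps, i.e. \( L \subseteq u^{N'}O^n \). Making the passage from ``nilpotent leading matrices'' to ``bounded orbit'' fully rigorous — controlling the interaction of the various valuation depths through the graded structure — is the technical heart of the argument.

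With boundedness in hand the proof closes via the first paragraph: writing \( L = A^{-1}O^n \), the inclusion \( WL \subseteq L \) gives \( W \subseteq A^{-1}\mathfrak{sl}(n, \CC[\![u^{-1}]\!])A \), and the maximal-order statement follows because the right-hand side is itself an order. A more conceptual alternative for producing the invariant lattice would be to use the CAT(0) geometry of the Bruhat–Tits building of \( \textnormal{SL}(n, \CC(\!(u^{-1})\!)) \): commensurability of \( W \) with \( \mathfrak{sl}(n, \CC[\![u^{-1}]\!]) \) bounds the set of \( W \)-almost-invariant lattice classes, and the circumcentre of its smallest enclosing ball is \( W \)-fixed; I would, however, prefer the explicit filtration argument above, as it also yields the bound \( N' \).
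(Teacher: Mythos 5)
The paper offers no proof of this statement at all---it is imported verbatim from \cite{stolin_sln}---so there is no internal argument to compare yours against; I can only assess the proposal on its own terms. Your reduction is correct: writing $O = \CC[\![u^{-1}]\!]$, the inclusion $W \subseteq A^{-1}\mathfrak{sl}(n,O)A$ is equivalent to $W$ stabilising the lattice $A^{-1}O^n$, the stabiliser of any lattice is itself an order, and the maximality clause then follows formally once the first clause is known. Your computation that the leading matrix $\bar X$ of any negative-valuation element of $W$ is ad-nilpotent, hence nilpotent, is also correct and is the right kind of input; moreover the set of all such leading matrices, graded by valuation, is closed under brackets, so Jacobson's weakly-closed-set refinement of Engel's theorem does produce the common flag you invoke for the negative part of $\textnormal{gr}(W)$.

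The gap is exactly where you flag it, and it is not a routine verification. The flag is strictly preserved only by the negative-valuation part of $\textnormal{gr}(W)$; the valuation-preserving part $\textnormal{gr}_{\geq 0}(W)$ has no reason to respect it. Since $L = \sum_k W^k O^n$ is built from products that freely interleave the two kinds of elements, nilpotency of the negative part alone does not confine the orbit: a product $XYXY\cdots$ with $v(X)=-1$, $\bar X$ strictly upper triangular, and $v(Y)=0$ still lowers valuation by $1$ per factor of $X$ (take $\bar X = E$, $\bar Y = F$ in $\mathfrak{sl}(2,\CC)$, where $EFEF\cdots \neq 0$). What saves the theorem is that Lie-closedness also constrains the mixed terms---in this example $[X,Y] \in W$ would have leading matrix $[E,F]=H$ at valuation $-1$, contradicting your nilpotency lemma, so the configuration cannot occur---but your sketch gives no mechanism for exploiting these cross-degree constraints systematically, and that is precisely the substance of Stolin's theorem. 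As written, the proposal is an honest reduction plus a correct first lemma, not a proof.
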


\begin{lemma}[Sauvage Lemma]\label{lem: Sauvage Lemma}
  The diagonal matrices
  \( \diag(u^{m_1}, \ldots, u^{m_n}) \), 
  where \( m_k \in \ZZ \) and 
  \( m_1 \leq  \ldots \leq m_n \),
  represent all double cosets in
  \( 
  \textnormal{GL}(n, \CC[\![u^{-1}]\!])
  \backslash
  \textnormal{GL}(n, \CC(\!(u^{-1})\!))
  /
  \textnormal{GL}(n, \CC[u])
  \).
\end{lemma}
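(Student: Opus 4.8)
The plan is to recognize the statement as the Birkhoff factorization theorem and to prove it through its geometric incarnation on $\mathbb{P}^1$. Write $K_- = \textnormal{GL}(n, \CC[\![u^{-1}]\!])$ and $K_+ = \textnormal{GL}(n, \CC[u])$, so that $\CC[\![u^{-1}]\!]$ is the completed local ring at the point $\infty \in \mathbb{P}^1 = \textnormal{Spec}(\CC[u]) \cup \{\infty\}$ and $\CC(\!(u^{-1})\!)$ is its fraction field. Since every permutation matrix lies in $K_- \cap K_+$, it suffices to show that each double coset contains some $\diag(u^{m_1}, \ldots, u^{m_n})$; the ordering $m_1 \le \cdots \le m_n$ is then arranged by conjugating with a permutation matrix. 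First I would set up the standard dictionary: to $g \in \textnormal{GL}(n, \CC(\!(u^{-1})\!))$ associate the rank-$n$ vector bundle $\mathcal{E}_g$ on $\mathbb{P}^1$ glued from the trivial bundle on $\textnormal{Spec}(\CC[u])$ and the trivial bundle on the formal disk $\textnormal{Spec}(\CC[\![u^{-1}]\!])$ at $\infty$ via the transition matrix $g$ on the punctured disk. Changing these two trivializations amounts precisely to left multiplication by $K_-$ and right multiplication by $K_+$, so $\mathcal{E}_g \cong \mathcal{E}_{g'}$ if and only if $g$ and $g'$ lie in the same double coset; as bundles on an affine line and on a formal disk are automatically trivial, every bundle arises this way. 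Under this dictionary $\diag(u^{m_1}, \ldots, u^{m_n})$ corresponds to $\bigoplus_{i=1}^n \mathcal{O}_{\mathbb{P}^1}(m_i)$.

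With this translation in place the lemma becomes the Birkhoff--Grothendieck splitting theorem, which I would prove by induction on $n$. Let $\mathcal{E} = \mathcal{E}_g$ and let $m$ be the largest integer for which $\textnormal{Hom}_{\mathbb{P}^1}(\mathcal{O}(m), \mathcal{E}) \ne 0$; such a maximal $m$ exists because the degrees of sub-line-bundles of a fixed bundle on $\mathbb{P}^1$ are bounded above. A nonzero map $\mathcal{O}(m) \hookrightarrow \mathcal{E}$ must be a subbundle inclusion, i.e. the quotient $\mathcal{E}' := \mathcal{E}/\mathcal{O}(m)$ is again locally free: otherwise the saturation of the image would be a line bundle $\mathcal{O}(m')$ with $m' > m$, contradicting maximality. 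By the induction hypothesis $\mathcal{E}' \cong \bigoplus_{i=1}^{n-1}\mathcal{O}(m_i)$ with, again by maximality, $m_i \le m$ for every $i$. The extension $0 \to \mathcal{O}(m) \to \mathcal{E} \to \mathcal{E}' \to 0$ is then classified by an element of $\textnormal{Ext}^1(\mathcal{E}', \mathcal{O}(m)) = \bigoplus_i H^1(\mathbb{P}^1, \mathcal{O}(m - m_i))$, and each summand vanishes because $m - m_i \ge 0$. Hence the sequence splits, $\mathcal{E} \cong \mathcal{O}(m) \oplus \bigoplus_i \mathcal{O}(m_i)$, which is exactly the assertion that $g$ lies in the double coset of a diagonal matrix with powers of $u$.

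Finally I would record the purely matrix-theoretic translation, which is how the result is phrased in the order-theoretic literature of \cite{stolin_sln}: one peels off a single diagonal entry $u^m$ by choosing an entry of minimal $u$-degree, normalizing it to a pure power of $u$ by a unit of $\CC[\![u^{-1}]\!]$ (the elements of $u$-degree $0$), clearing the rest of its column by $\CC[\![u^{-1}]\!]$-row operations and the rest of its row by $\CC[u]$-column operations, and then inducting on the complementary $(n-1)\times(n-1)$ block. The main obstacle in this elementary route, and the reason I prefer the geometric argument, is the asymmetry of the two coefficient rings: column operations over $\CC[u]$ can only remove the part of a row entry lying in $u^m\CC[u]$, so clearing a row may produce a new entry of $u$-degree below that of the current pivot and thereby invalidate the choice of pivot. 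One therefore needs a monovariant forcing termination, and the geometric formulation exhibits it cleanly: it is the maximal degree $m$ of a sub-line-bundle, whose existence together with the vanishing $H^1(\mathbb{P}^1, \mathcal{O}(d)) = 0$ for $d \ge 0$ makes the induction terminate.
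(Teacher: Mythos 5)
Your statement is one the paper does not prove at all: it is quoted from \cite{stolin_sln} as a known input (the classical Sauvage lemma / Birkhoff factorization), so there is no in-paper argument to compare against. Your geometric proof is correct and is the standard ``modern'' route: the double coset space \( \textnormal{GL}(n,\CC[\![u^{-1}]\!]) \backslash \textnormal{GL}(n,\CC(\!(u^{-1})\!)) / \textnormal{GL}(n,\CC[u]) \) is identified with isomorphism classes of rank-\(n\) bundles on \(\mathbb{P}^1\), and the lemma becomes Birkhoff--Grothendieck, which your induction (maximal sub-line-bundle degree plus \( H^1(\mathbb{P}^1,\mathcal{O}(d))=0 \) for \( d\ge 0 \)) establishes correctly. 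Two places deserve an extra line if this were to be written out in full. First, the dictionary itself: gluing along the \emph{formal} disk at infinity rather than a Zariski-open punctured neighbourhood is a descent statement (Beauville--Laszlo type), and the claim that every bundle arises and that isomorphism corresponds exactly to the double coset relation rests on it; it is true and standard, but it is the one unproved ingredient your argument leans on. Second, the inequality \( m_i\le m \) for the summands of \( \mathcal{E}'=\mathcal{E}/\mathcal{O}(m) \) is not literally ``again by maximality'': one should pull the extension back along \( \mathcal{O}(m_i)\hookrightarrow\mathcal{E}' \) and use \( H^1(\mathbb{P}^1,\mathcal{O}(-1))=0 \) to produce a nonzero map \( \mathcal{O}(m+1)\to\mathcal{E} \) whenever \( m_i>m \), which is what contradicts maximality. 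By contrast, the proof in the order-theoretic literature the paper cites is the elementary pivot/row-reduction argument you sketch in your last paragraph; your geometric version trades the delicate termination bookkeeping of that approach for cohomology vanishing, at the price of importing the formal gluing machinery. Either route is acceptable; yours is complete modulo the two points above.
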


Let
\( W \subseteq \g(\!(z)\!) \times \g(\!(z^{-1})\!) \) be
a Lagrangian subalgebra satisfying the conditions of
Lemma \ref{lem: trigonom solutions <-> Lagrangian sub}.
Since the projections
\( \pi_1 W \subseteq \g(\!(z)\!) \) and
\( \pi_2 W \subseteq \g(\!(z^{-1})\!) \) are orders,
by Theorem
\ref{thm: orders in sl(n,C((t^-1)))} there are
matrices
\( A_\pm \in GL(n, \CC(\!(z^{\pm 1})\!)) \)
such that
\begin{equation}
\begin{split}
  \pi_1 W &\subseteq A_+^{-1} \mathfrak{sl}(n, \CC[\![z]\!]) A_+, \\
  \pi_2 W &\subseteq A_-^{-1} \mathfrak{sl}(n, \CC[\![z^{-1}]\!]) A_-.
\end{split}
\end{equation}
By Sauvage Lemma
\ref{lem: Sauvage Lemma}
we can find matrices
\begin{equation}
\begin{split}
    P_\pm
    &\in
    \textnormal{GL}(n,\CC[\![z^{\pm 1}]\!]), \\
    d_\pm
    &\in
    \textnormal{GL}(n,\CC[z, z^{-1}] ), \\
    Q_\pm
    &\in
    \textnormal{GL}(n,\CC[z^{\mp 1}]),
\end{split}
\end{equation}
where \( d_\pm \) are diagonal, such that
\begin{equation}
\begin{split}
  \pi_1 W
  &\subseteq
  Q_+^{-1} d_+^{-1} P_+^{-1} \mathfrak{sl}(n, \CC[\![z]\!])
  P_+ d_+ Q_+, \\
  \pi_2 W
  &\subseteq
  Q_-^{-1} d_-^{-1} P_-^{-1} \mathfrak{sl}(n, \CC[\![z^{-1}]\!])
  P_- d_- Q_-.
\end{split}
\end{equation}
Taking the product and using the fact that
\( P_\pm^{-1} \mathfrak{sl}(n, \CC[\![z^{\pm 1}]\!])
  P_\pm = \mathfrak{sl}(n, \CC[\![z^{\pm 1}]\!]) \) we obtain
the inclusion
\begin{equation}
W
\subseteq
\seq{Q_+^{-1} d_+^{-1} \mathfrak{sl}(n, \CC[\![z]\!]) d_+ Q_+}
\times
\seq{Q_-^{-1} d_-^{-1} \mathfrak{sl}(n, \CC[\![z^{-1}]\!]) d_- Q_-}.
\end{equation}
Note that the componentwise conjugation by \( Q_+ \) or
\( d_+ \) is a regular equivalence.
Applying these conjugations we get
\begin{equation}\label{eq: dQWQd}
\begin{split}
\widetilde{W} := d_+ Q_+ W Q_+^{-1} d_+^{-1}
&\subseteq
\mathfrak{sl}(n, \CC[\![z]\!])
\times
\seq{d_+ Q_+ Q_-^{-1} d_-^{-1} \mathfrak{sl}(n, \CC[\![z^{-1}]\!]) d_- Q_- Q_+^{-1} d_+^{-1}}
\\&\subseteq \mathfrak{sl}(n, \CC[\![z]\!])
\times
 \mathfrak{sl}(n, \CC(\!(z^{-1})\!))
\end{split}
\end{equation}

By Theorem \ref{thm: delta = [, r]}
the classification problem of
classical twists of the standard Lie bialgebra
structure \( \delta_0 \) is equivalent to the
classification problem of \( \id \)-trigonometric
\rmatrs.
The following lemma reduces the question even further
to quasi-trigonometric \rmatrs.

\begin{lemma}\label{lem: quasi-trigonom <-> trigonom}
Any \( \id \)-trigonometric \rmatr is regularly equivalent to
a quasi-trigonometric one.
\end{lemma}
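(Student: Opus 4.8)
The plan is to combine the order-theoretic reduction carried out in the paragraphs above with the dictionary between classical twists and Lagrangian subalgebras from Lemma~\ref{lem: trigonom solutions <-> Lagrangian sub}. Let $r_t = r_0 + t$ be an $\id$-trigonometric \rmatr and let $\widehat{W}_t \subseteq \g(\!(z)\!) \times \g(\!(z^{-1})\!)$ be the Lagrangian subalgebra it determines. Since both projections $\pi_1 \widehat{W}_t$ and $\pi_2 \widehat{W}_t$ are orders, the reduction preceding the statement (via Theorem~\ref{thm: orders in sl(n,C((t^-1)))} and the Sauvage Lemma~\ref{lem: Sauvage Lemma}) produces a diagonal matrix $d_+ \in \textnormal{GL}(n, \CC[z, z^{-1}])$ and a matrix $Q_+ \in \textnormal{GL}(n, \CC[z^{-1}])$ such that, setting $\phi \coloneqq \textnormal{Ad}(d_+ Q_+)$, the conjugate $\widetilde{W} \coloneqq (\phi \times \phi)\widehat{W}_t = d_+ Q_+ \widehat{W}_t Q_+^{-1} d_+^{-1}$ satisfies $\pi_1 \widetilde{W} \subseteq \mathfrak{sl}(n, \CC[\![z]\!]) = \g[\![z]\!]$.

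First I would verify that $\phi$ is a genuine regular equivalence. Because $d_+ Q_+ \in \textnormal{GL}(n, \CC[z, z^{-1}])$, componentwise conjugation is a $\CC[z, z^{-1}]$-linear automorphism of $\mathfrak{sl}(n, \CC[z, z^{-1}])$, and $z \mapsto \textnormal{Ad}(d_+(z) Q_+(z))$ is regular on all of $\CC^*$; hence $\phi \in \textnormal{Aut}_{O - \textnormal{LieAlg}}(\Loop)$. Theorem~\ref{thm: Gauge equivalence} then guarantees that $r_s(x, y) \coloneqq (\phi(x) \otimes \phi(y)) r_t(x, y)$ is again an $\id$-trigonometric \rmatr, whose twist $s$ is a classical twist of $\delta_0$ and whose associated Lagrangian subalgebra is precisely $\widehat{W}_s = \widetilde{W}$.

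It then remains to recognise $r_s$ as quasi-trigonometric, i.e. to show $s \in \g[z] \otimes \g[z]$. Conjugation by $d_+ Q_+$ preserves the order property, so $\pi_2 \widetilde{W}$ is still an order and $\pi_2 \widehat{W}_s \subseteq z^M \g[\![z^{-1}]\!]$ for some $M \geq 0$; together with $\pi_1 \widehat{W}_s \subseteq \g[\![z]\!] = z^0 \g[\![z]\!]$, the equivalences established in the proof of Lemma~\ref{lem: trigonom solutions <-> Lagrangian sub} (with $N = 0$) give $\im(\psi(s)) \subseteq \{(\sum_{k=0}^M a_k z^k, \sum_{k=0}^M a_k z^k) \mid a_k \in \g\}$, that is $\im(\psi(s)) \subseteq \g[z]$ under $\Delta \cong \Loop$. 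Since, in the notation of Theorem~\ref{thm: W <-> t <-> T correspondence}, the twist is recovered from $T = \psi(s)$ as $s = -Tw_i \otimes Tv^i \in \im(T) \otimes \im(T)$, it follows that $s \in \g[z] \otimes \g[z]$. Thus $r_s = r_0 + s$ is quasi-trigonometric and, by construction, regularly equivalent to $r_t$.

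The main obstacle has in fact already been overcome: it is the reduction to orders and their normalisation via the Sauvage Lemma carried out before the statement; everything else is bookkeeping through Lemmas~\ref{lem: trigonom solutions <-> Lagrangian sub} and the correspondence of Theorem~\ref{thm: W <-> t <-> T correspondence}. The one point inside the present argument that requires care is that the normalising matrix $d_+ Q_+$ is a \emph{Laurent-polynomial} matrix, so that conjugation by it is regular on all of $\CC^*$ rather than merely formal or holomorphic near the punctures; this is exactly what makes $\phi$ a regular (and not just holomorphic) equivalence and lets Theorem~\ref{thm: Gauge equivalence} convert the inclusion $\pi_1 \widetilde{W} \subseteq \g[\![z]\!]$ into the asserted regular equivalence between $r_t$ and a quasi-trigonometric \rmatr.
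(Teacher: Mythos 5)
Your proposal is correct and takes essentially the same route as the paper: the Sauvage-lemma normalisation of the two orders supplies the Laurent-polynomial (hence regular) equivalence $\phi$, and the inclusion $\pi_1 \widehat{W}_s \subseteq \g[\![z]\!]$ is then converted into $s \in \g[z] \ot \g[z]$. The only (harmless) difference is the final step, where the paper rules out negative powers of $z$ in $s$ by a direct coefficient computation inside $W_s$, whereas you route the same fact through the correspondence of Theorem \ref{thm: W <-> t <-> T correspondence} together with the observation that $s \in \im(\psi(s)) \ot \im(\psi(s))$ and that $\im(\psi(s))$ is forced into the diagonal copy of $\g[z]$.
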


\begin{proof}
Let \( r_t = r_0 + t \)
be an \( \id \)-trigonometric \rmatr, where \( t \) is a classical
twist of \( \delta_0 \),
and \( W_t \) be the corresponding Lagrangian
subalgebra of \( \g(\!(z)\!) \times \g(\!(z^{-1})\!) \).
By the argument preceding the lemma there is a regular equivalence
\( \phi \in \textnormal{Aut}_{\CC[z,z^{-1}]-\textnormal{LieAlg}}(\g[z,z^{-1}]) \)
such that
\begin{equation}\label{eq: Ws in sl times d sl d}
  W_s \coloneqq (\phi \times \phi) W_t
  \subseteq
  \mathfrak{sl}(n, \CC[\![z]\!])
  \times
  \mathfrak{sl}(n, \CC(\!(z^{-1})\!)).
\end{equation}
for some classical twist \( s \) of \( \delta_0 \).
We now show that \( r_s \) is quasi-trigonometric,
or equivalently, that
\( s \in \g[z]^{\ot2} \cong \g^{\ot2}[x,y]\).
Let \( \set{I_\alpha}_{\alpha=1}^n \)
be an orthonormal basis
for \( \mathfrak{sl}(n,\CC) \).
Then we can write
\begin{equation}
  s
  =
  s^{\alpha \beta}_{ij} I_\alpha x^i \ot I_\beta y^j.
\end{equation}
Assume that \( s^{\alpha' \beta'}_{k \ell} \neq 0 \)
for some \( \alpha', \beta' \in \set{1, \ldots, n} \)
and \( k, \ell \in \ZZ \)
such that at least one of the indices 
\( k \) or \( \ell \) is strictly negative,
i.e. the tensor \( s \) contains a negative power
of \( z \) in one of its components.
Since \( s \) is skew-symmetric we may assume without
loss of generality that \( k < 0 \).
Then 
\begin{equation}
  \pi_1 \seq{s^{\alpha \beta}_{ij}
  B\seq{
    (I_\beta z^j,I_\beta z^j),
    (I_{\beta'} z^{-\ell}, 0)
  }
  (I_\alpha z^i, I_\alpha z^i)}
  =
  s^{\alpha \beta'}_{i \ell}
  I_{\alpha}z^i
  \end{equation}
where the sum in the right-hand side contains
\( z^{k} \), \( k < 0 \).
However, by \eqref{eq: Ws in sl times d sl d}
the projection \( \pi_1(W_s) \) is contained in
\( \mathfrak{sl}(n, \CC[\![z]\!]) \) and
hence cannot contain negative powers of \( z \).
This contradiction shows that both components of \( s \) 
are polynomials in \( z \).
\end{proof}

Quasi-trigonometric \rmatrs over \(\mathfrak{sl}(n,\CC)\) were classified (up to regular equivalence) in \cite{KPSST}
using BD qudruples we
introduced at the beginning of this section.
One can show that if we lift
the Lagrangian subalgebra
\( W \subseteq \g \times \g(\!(z^{-1})\!) \),
constructed from a BD quadruple \( Q \) in 
\cite{KPSST}, to
\( \g(\!(z)\!) \times \g(\!(z^{-1})\!) \)
we get precisely 
the Lagrangian subalgebra
\( \widehat{W}_Q \) 
determined by the relation
\begin{equation}
\seq{ \Loop \times \Loop}
\cap
\widehat{W}_Q = W_Q,
\end{equation}
where \( W_Q \)
is given by \eqref{eq: W_Q}.
By Lemma 
\ref{lem: trigonom solutions <-> Lagrangian sub}
the Lagrangian subalgebra \( \widehat{W}_Q \)
uniquely determines the classical twist
\( t_Q \).
This gives the classification of classical twists and proves the first part of Theorem \nolinebreak \ref{thm: classification} in the special case \( \g = \mathfrak{sl}(n, \CC) \).

\begin{remark}
The statement of Lemma \ref{lem: quasi-trigonom <-> trigonom} is not surprising. Its general version can be deduced from Theorems \ref{thm: classification} and \ref{lem: classification for parabolics}.
Precisely, for any finite-dimensional simple Lie algebra \( \g \) an \( \id \)-trigonometric solution
\( r_Q = r_0 + t_Q \), given by a BD qudruple \( Q=(\Gamma_1, \Gamma_2, \gamma, t_\h ) \), is regularly equivalent
to a quasi-trigonometric one if and only if there is an automorphism \( \vartheta \) of the Dynkin diagram of \( \Loop \) such that
\( \widetilde{\alpha}_0 \not\in \vartheta(\Gamma_1) \).
It is easy to check that this condition is always satisfied for Dynkin diagrams
of types \( A^{(1)}_n, C^{(1)}_n, B^{(1)}_{2-4} \) and \( D^{(1)}_{4-10} \).
Therefore, in these cases any \( \id \)-trigonmetric solution is regularly equivalent to a quasi-trigonometric one.
In other cases it is always possible to find a BD quadruple \( Q \),
such that \( r_Q \) is not equivalent to a quasi-trigonometric \rmatr (see Figure \ref{fig: dynkin diag}). 
\end{remark}
\begin{figure}[H]
    \centering
    \includegraphics[scale=0.26]{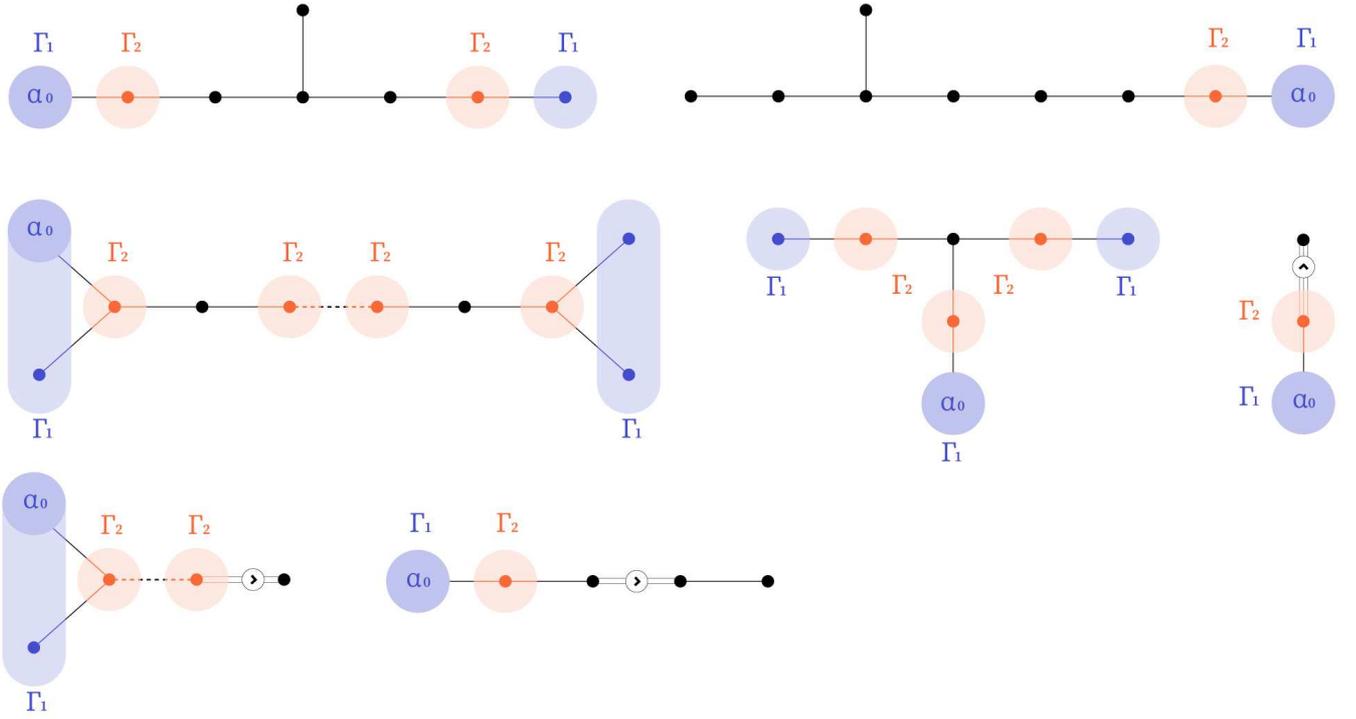}
    \caption{Examples of \( \Gamma_1 \) and \( \Gamma_2 \) giving rise to \( \id \)-trigonometric solutions not equivalent to quasi-trigonometric ones. The dashed lines mean any number \( m \geq 1 \)
    of vertices.}
    \label{fig: dynkin diag}
\end{figure}

\section{Algebro-geometric proof of the main classification theorem}\label{sec: Proof}
In this section we 
give a brief summary of the results in 
\cite{burban_galinat},
prove the extension property for formal equivalences between geometric \rmatrs
(see Theorem \ref{thm: lifting equivalences}) and, finally, combining this property with the results in
\cite{TBA} on geometrization of \( \sigma \)-trigonometric \rmatrs
we verify
Theorem \ref{thm: classification}.

\subsection{Survey on the geometric theory of the CYBE}\label{sec: survey on geometry}
Let \(E\) be an irreducible projective curve of arithmetic genus 1. Then \(E\) is a \emph{Weierstraß cubic}, i.e. there are parameters \(g_2,g_3 \in \CC\) such that \(E\) is the projective closure of \(E_\circ = \textnormal{V}(y^2-4x^3+g_2x+g_3)\subseteq \mathbb{P}^2_{(w:x:y)} \) by a smooth point \(p\) at infinity. \(E\) is singular if and only if \(g_2^3=27g_3^2\) and an elliptic curve otherwise. In the singular case it has a unique singular point \(s\), which is a simple cusp if \(g_2 = 0 = g_3\) and a simple node otherwise. Let \(\Breve{E}\) be the set of smooth points of \(E\). Fix a non-zero section \(\omega \in \Gamma(E,\Omega_E) \cong \CC\), where \(\Omega_E\) is the dualising sheaf. We view \(\omega\) as a global regular 1-form in the Rosenlicht sense (see e.g. \cite[Section II.6]{barth_et_al}).

We consider now a coherent sheaf \(\mathcal{A}\) of Lie algebras on \(E\) such that
\begin{itemize}
    \item[(i)] \(\textnormal{H}^0(E,\mathcal{A}) = 0 = \textnormal{H}^1(E,\mathcal{A})\) and
    \item[(ii)] \(\Breve{\mathcal{A}} = \mathcal{A}|_{\Breve{E}}\) is \emph{weakly \(\g\)-locally free}, i.e. \(\mathcal{A}|_p \cong \g\) as Lie algebras for all \(p \in \Breve{E}\).
\end{itemize}
Property (i) gives that \(\mathcal{A}\) is torsion free and property (ii) ensures that the rational envelope \(A_K\) of the sheaf \(\mathcal{A}\) is a simple Lie algebra over the field \(K\) of rational functions on \(E\). Together these properties give the existence of a distinguished section, called \emph{geometric \(r\)-matrix}, \(\rho \in \Gamma(\Breve{E} \times \Breve{E},\Breve{\mathcal{A}} \boxtimes \Breve{\mathcal{A}}(D))\), where \(D = \{(x,x) \in \Breve{E} \times \Breve{E}\colon x \in \Breve{E}\}\) is the \emph{diagonal divisor}. This section satisfies a geometric version of a generalised CYBE, although, if \(E\) is singular, it lacks skew-symmetry in general, which prevents it to solve the CYBE. Thus we demand one more property of \(\mathcal{A}\) in this case, which ensures skew-symmetry.

If \(E\) is singular with a singularity \(s\), we can consider the invariant non-degenerate \(\CC\)-bilinear form
\begin{align}
    B^\omega_s \colon A_K \times A_K \longrightarrow K \stackrel{\textnormal{res}_s^\omega}{\longrightarrow} \CC,
\end{align}
where the first map is the Killing form of \(A_K\) over \(K\) and \(\textnormal{res}_s^\omega(f) = \textnormal{res}_s(f\omega)\) is the residue taken in the Rosenlicht sense. 
\begin{itemize}\setcounter{enumi}{2}
    \item[(iii)] \(\mathcal{A}_s \subset A_K\) is isotropic, i.e. \(B^\omega_s(\mathcal{A}_s,\mathcal{A}_s) = 0\).
\end{itemize}
Now the main statement of the geometric approach to the CYBE is the following.
\begin{theorem}[{\cite[Theorem 4.3]{burban_galinat}}]
The geometric \(r\)-matrix \(\rho\) is a non-degenerate and skew symmetric (both meant in an appropriate geometric manner) solution of a geometric version of the CYBE.\qed
\end{theorem}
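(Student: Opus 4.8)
The plan is to derive the three asserted properties of $\rho$ by pairing a local residue analysis along the diagonal with the cohomological rigidity supplied by condition (i). The starting observation is that conditions (i) and (ii) force the rational envelope $A_K$ to be a finite-dimensional simple Lie algebra over the field $K$ of rational functions on $E$; its Killing form is therefore non-degenerate and carries a canonical Casimir element $\gamma \in A_K \ot_K A_K$. I would first exhibit $\rho$ as this Casimir realised as a rational section of $\Breve{\mathcal{A}} \boxtimes \Breve{\mathcal{A}}$: the vanishing $\textnormal{H}^0(E,\mathcal{A}) = \textnormal{H}^1(E,\mathcal{A}) = 0$ produces, through the residue sequence along the diagonal $D$, a distinguished section of $\Breve{\mathcal{A}} \boxtimes \Breve{\mathcal{A}}(D)$ whose residue along $D$ is the geometric Casimir $\gamma$. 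Non-degeneracy is then immediate: at a generic smooth point condition (ii) identifies the fibre with $\g$ and the residue with the Casimir of $\g$, which is invertible because $\g$ is simple, so that $\rho$ induces an isomorphism of sheaves $\dual{\Breve{\mathcal{A}}} \cong \Breve{\mathcal{A}}(D)$ in the appropriate geometric sense.

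For the Yang--Baxter identity I would form the section
\[
f \coloneqq [\rho^{12}, \rho^{13}] + [\rho^{12}, \rho^{23}] + [\rho^{13}, \rho^{23}]
\]
of $\Breve{\mathcal{A}} \boxtimes \Breve{\mathcal{A}} \boxtimes \Breve{\mathcal{A}}$ over $\Breve{E}^3$, which \emph{a priori} has first-order poles along the three diagonals $D_{12}, D_{13}, D_{23}$. The crux is the residue computation: along $D_{12}$ only the first two summands are singular, and the residue of $f$ there equals $[\gamma^{12}, (\rho^{13} + \rho^{23})|_{D_{12}}]$, where $\gamma$ is again the residue of $\rho$ along the diagonal. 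This vanishes because $\gamma$ is ad-invariant, i.e. $[\gamma, x \ot 1 + 1 \ot x] = 0$, and the same holds along $D_{13}$ and $D_{23}$ by symmetry. Hence $f$ is pole-free and extends to a global section of $\mathcal{A} \boxtimes \mathcal{A} \boxtimes \mathcal{A}$ on $E^3$. By the K\"unneth formula and $\textnormal{H}^0(E,\mathcal{A}) = 0$ one has $\textnormal{H}^0(E^3, \mathcal{A} \boxtimes \mathcal{A} \boxtimes \mathcal{A}) = 0$, which forces $f = 0$; this is the geometric CYBE.

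Skew-symmetry follows the same template. I would examine the combination $\rho(x,y) + \tau\rho(y,x)$: since $\gamma$ is symmetric while interchanging the two factors of $\Breve{E} \times \Breve{E}$ reverses the sign of the local defining function of $D$, the residues along $D$ cancel, so this combination extends to a global section of $\mathcal{A} \boxtimes \mathcal{A}$ on $E^2$, whence it vanishes by $\textnormal{H}^0 = 0$ and the K\"unneth formula. In the smooth case this symmetry is automatic; in the singular case it is precisely condition (iii)---the isotropy of $\mathcal{A}_s$ with respect to the Rosenlicht form $B^\omega_s$---that kills the residue of the symmetric part at the singular point $s$ and guarantees that the combination is genuinely a regular section, so that the vanishing argument applies.

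The main obstacle is the local residue bookkeeping rather than the cohomology. The punchline ``pole-free implies zero'' via $\textnormal{H}^0(E,\mathcal{A}) = 0$ is robust, but to invoke it I must first pin down the pole order of $\rho$ and identify its residues exactly, both along the diagonals and, crucially, at the node or cusp, where $\textnormal{res}_s^\omega$ and $B^\omega_s$ replace the naive local computation. Verifying that the residue of $\rho$ along $D$ is genuinely the Casimir $\gamma$ (and not a twist of it) and that the singular fibre contributes no extra poles to $f$ or to the symmetric part is the technical heart; once this is settled, non-degeneracy, the CYBE and skew-symmetry all reduce to the single rigidity statement furnished by the vanishing of $\textnormal{H}^0$.
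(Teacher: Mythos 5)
The paper does not actually prove this statement: it is imported verbatim from \cite{burban_galinat} and closed immediately with a QED symbol, so there is no internal argument to compare against and your proposal has to be measured against the proof in the cited reference. Your overall template --- realise \(\rho\) as the lift of the Casimir along the residue sequence \(0 \to \mathcal{A}\boxtimes\mathcal{A} \to \mathcal{A}\boxtimes\mathcal{A}(D) \to (\mathcal{A}\boxtimes\mathcal{A}(D))|_D \to 0\), with existence and uniqueness of the lift coming from the K\"unneth vanishing of \(H^1\) and \(H^0\), and then kill the unwanted combinations by computing their residues along the diagonals --- is the right one, and your residue computation along \(D_{12}\) (the residue of \(\textnormal{CYB}(\rho)\) there equals \([\gamma^{12},(\rho^{13}+\rho^{23})|_{D_{12}}]\), which vanishes by ad-invariance of the Casimir) is correct.

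The genuine gap is in the punchline ``pole-free implies zero''. Removing the poles along the diagonals only makes \(f=\textnormal{CYB}(\rho)\) (respectively \(\rho(x,y)+\tau\rho(y,x)\)) a regular section over \(\Breve{E}^3\) (respectively \(\Breve{E}^2\)), i.e.\ over the smooth locus, not over \(E^3\). In the singular case --- the only case relevant to this paper, \(E\) being nodal --- \(\Gamma(\Breve{E},\Breve{\mathcal{A}})\) is enormous (in the application it is the loop algebra \(\Loop^\sigma\)), so global sections of the box product over \(\Breve{E}^n\) certainly do not vanish, and K\"unneth applied to \(H^0(E,\mathcal{A})=0\) is of no use unless you first extend your section across the codimension-one strata \(\{s\}\times E\times E\), etc. Nothing in your argument controls the behaviour of \(\rho\) near the singular point, and that is exactly where condition (iii) must do its work; you flag this as ``the technical heart'' but do not supply the mechanism. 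In the actual proof the mechanism is different: skew-symmetry comes from the residue theorem on \(E\) itself --- for sections \(f\in H^0(E,\mathcal{A}(x))\), \(g\in H^0(E,\mathcal{A}(y))\) the Rosenlicht residues of \(\K(f,g)\,\omega\) sum to zero, the contributions at \(x\) and \(y\) produce the two pairings against \(\rho(x,y)\) and \(\tau\rho(y,x)\), and the contribution at \(s\) is killed by the isotropy of \(\mathcal{A}_s\) with respect to \(B^\omega_s\). Likewise, non-degeneracy is obtained by identifying \(\rho(x,y)\) with the tensor of the isomorphism \(\textnormal{ev}_y\circ\textnormal{res}_x^{-1}\colon \mathcal{A}|_x \to \mathcal{A}|_y\) furnished by the cohomology vanishing of \(\mathcal{A}(x)\), which gives non-degeneracy at every pair of distinct smooth points, whereas your Casimir-residue argument only yields it generically near the diagonal.
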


We want to describe \(\rho\) as a series, which can be thought of as a Taylor expansion in the second coordinate at the smooth point \(p\) at infinity. To do so, let us switch from the sheaf theoretic setting to one localised at the formal neighbourhood of \(p\).
There is a unique element \(u\) inside the \(\mathfrak{m}_p\)-adic completion \(\widehat{O}_p\) of the local ring \((\mathcal{O}_{E,p},\mathfrak{m}_p)\), such that \(u(p) = 0\) and \(\widehat{\omega}_p = du\). We can identify \(\widehat{O}_p\) with \(\CC[\![u]\!]\). Thus the field of fractions \(\widehat{Q}_p\) can be identified with \(\CC(\!(u)\!)\). Consequently, we may view \(O = \Gamma(E_\circ,\mathcal{O}_E)\) as a subalgebra of \(\widehat{Q}_p =\CC(\!(u)\!)\). 

Since \( \g \) is simple, Whitehead's lemma implies that \(\textnormal{H}^2(\g,\g) = 0\) and hence
all formal deformations of \(\g\) are trivial (see e.g. \cite[Section A.8]{hazewinkel_gerstenhaber}). 
Thus \(\widehat{\mathcal{A}}_p\), which can be understood as a formal deformation of \(\g\) by Property (ii) of \(\mathcal{A}\), is trivial as a formal deformation, i.e. there exists an \(\widehat{O}_p=\CC[\![u]\!]\)-linear isomorphism \(\xi\colon\widehat{\mathcal{A}}_p \longrightarrow \g[\![u]\!]\), called \emph{formal trivialisation}, of Lie algebras such that the induced isomorphism 
\begin{equation}\label{eq: formal trivialisation}
    \g \cong \widehat{\mathcal{A}}_p/\mathfrak{m}_p\widehat{\mathcal{A}}_p \longrightarrow \mathcal{A}|_p  \cong \g
\end{equation}
is the identity. We obtain an induced Lie algebra isomorphism \(Q(\widehat{
\mathcal{A}}_p) = \widehat{ 
\mathcal{A}
}_p \ot_{\widehat{Q}_p} \widehat{Q}_p \longrightarrow \g(\!(u)\!)\) via the \(\CC(\!(u)\!)\)-linear extension of \(\xi\), which we denote by the same symbol. We write the image of \(\Gamma(E_\circ,\mathcal{A}) \subseteq Q(\widehat{
\mathcal{A}}_p)\) under \(\xi\) by \(\g(\rho) \subseteq \g(\!(u)\!)\). 

Note that \(\g(\!(u)\!)\) is equipped with the invariant non-degenerate \(\CC\)-bilinear form
\begin{equation}
B_p(f,g)
\coloneqq
\textnormal{res}_0 \left[ \K(f,g) du \right]
\qquad
f,g \in \g(\!(u)\!).
\end{equation}

\begin{theorem}[{\cite[Proposition 6.1 \& Theorem 6.4]{burban_galinat}}]\label{thm: geomrmat in p} \((\g(\!(u)\!),\g(\rho),\g[\![u]\!])\) is a Manin triple and the Taylor expansion in the second coordinate with respect to \(u\) in \(p\) gives an injection 
\begin{equation}
\Gamma(\Breve{E}\times \Breve{E},\Breve{\mathcal{A}} \boxtimes \Breve{\mathcal{A}}(D)) \longrightarrow (\g \ot \g)(\!(x)\!)[\![y]\!]
\end{equation}
which maps \(\rho\) to \(\sum_{k = 0}^\infty \sum_{\ell = 1}^n f_{k\ell} \ot y^kb_\ell\),
where \(\{b_\ell\}\) is a basis of \(\g\) and \(\{f_{k\ell}\}\) is the basis of \(\g(\rho) \subseteq \g(\!(u)\!)\), uniquely determined by \(B_p(f_{k\ell},u^{k'}b_{\ell'}) = \delta_{k\ell}\delta_{k'\ell'}\).\qed
\end{theorem}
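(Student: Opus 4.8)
The plan is to treat the two assertions separately: first deduce the Manin triple structure from Properties (i)--(iii) of \(\mathcal{A}\), and then identify the Taylor expansion of \(\rho\) with the canonical element of that triple. For the Manin triple \((\g(\!(u)\!), \g(\rho), \g[\![u]\!])\), non-degeneracy of the residue pairing \(B_p\) on \(\g(\!(u)\!)\) is standard. The subalgebra \(\g[\![u]\!]\) is isotropic because \(\K(f,g) \in \CC[\![u]\!]\) has no \(u^{-1}\)-term for \(f,g \in \g[\![u]\!]\), so its residue vanishes. For isotropy of \(\g(\rho) = \xi(\Gamma(E_\circ,\mathcal{A}))\) I would invoke the Rosenlicht residue theorem on \(E\): for \(f,g \in \Gamma(E_\circ,\mathcal{A})\) the \(1\)-form \(\K(f,g)\,\omega\) is rational and regular at every point of \(\Breve{E}\setminus\{p\}\), so \(\sum_{x\in E}\textnormal{res}^\omega_x(\K(f,g)) = 0\) reduces to the contributions at \(p\) and at the singular point \(s\). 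Since global sections restrict to \(\mathcal{A}_s\), the latter equals \(B^\omega_s(f,g) = 0\) by Property (iii), whence \(B_p(\xi f,\xi g) = \textnormal{res}^\omega_p(\K(f,g)) = 0\); in the smooth case there is no \(s\) and the conclusion is immediate. Finally, the splitting \(\g(\!(u)\!) = \g(\rho) \add \g[\![u]\!]\) is where Property (i) enters: applying the Mayer--Vietoris sequence for the cover of \(E\) by \(E_\circ\) and the formal neighbourhood of \(p\) yields
\begin{equation*}
0 \to \textnormal{H}^0(E,\mathcal{A}) \to \Gamma(E_\circ,\mathcal{A}) \oplus \widehat{\mathcal{A}}_p \to Q(\widehat{\mathcal{A}}_p) \to \textnormal{H}^1(E,\mathcal{A}) \to 0,
\end{equation*}
with the middle arrow the difference \((f,g) \mapsto f-g\). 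Transporting along \(\xi\), the vanishing of \(\textnormal{H}^0\) forces \(\g(\rho) \cap \g[\![u]\!] = 0\) and the vanishing of \(\textnormal{H}^1\) forces \(\g(\rho) + \g[\![u]\!] = \g(\!(u)\!)\).

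For the second assertion I would define the expansion map by composing \(\xi\) in each tensor factor with the Taylor expansion in the local parameter \(u\) at \(p\) in the second variable. The simple pole permitted along the diagonal \(D\) is precisely what produces, after expansion, a power series in the second coordinate \(y\) together with a Laurent series in the first coordinate \(x\), i.e. an element of \((\g \ot \g)(\!(x)\!)[\![y]\!]\). Injectivity follows from the identity principle: since \(\Breve{E} \times \Breve{E}\) is irreducible and the expansion records the section on a dense formal neighbourhood, a section with vanishing expansion must be identically zero. To identify the image of \(\rho\), I would use that \(\rho\) is the reproducing kernel of the Manin triple: \(\{u^k b_\ell\}\) is a topological basis of \(\g[\![u]\!]\), and because \(\g(\rho)\) is its Lagrangian complement it admits a unique dual family \(\{f_{k\ell}\} \subseteq \g(\rho)\) with \(B_p(f_{k\ell}, u^{k'}b_{\ell'}) = \delta_{kk'}\delta_{\ell\ell'}\). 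The canonical element attached to the splitting is then \(\sum_{k,\ell} f_{k\ell} \ot u^k b_\ell\), and matching this against the geometric definition of \(\rho\)---whose characterising property is that pairing it in the second factor against a section reproduces that section modulo \(\g[\![u]\!]\)---gives the claimed expansion, with \(u^k\) read off as \(y^k\).

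The hard part will be this last identification: relating the intrinsic, sheaf-theoretic definition of the geometric \(r\)-matrix \(\rho\) (built from the residue pairing on \(E\)) to the purely algebraic reproducing kernel of the decomposition \(\g(\!(u)\!) = \g(\rho) \add \g[\![u]\!]\). Concretely, one must verify that the section cut out by the diagonal pole, once expanded, has each \(y^k b_\ell\)-coefficient lying in \(\g(\rho)\) and dual to \(u^k b_\ell\) under \(B_p\); this is exactly the point where the compatibility of the residue form with the diagonal divisor \(D\) and the non-degeneracy of \(\rho\) on \(\Breve{E}\) must be used carefully.
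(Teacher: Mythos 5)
The paper itself gives no proof of this statement: it is quoted verbatim from \cite{burban_galinat} (Proposition 6.1 and Theorem 6.4), with the \(\blacksquare\) placed directly after the statement. So the comparison here is between your reconstruction and the argument in the cited reference. Your treatment of the Manin triple half is correct and complete: non-degeneracy of \(B_p\), isotropy of \(\g[\![u]\!]\), isotropy of \(\g(\rho)\) via the Rosenlicht residue theorem together with Property (iii) at the singular point, and the splitting \(\g(\!(u)\!)=\g(\rho)\add\g[\![u]\!]\) from the four-term sequence forced by \(\textnormal{H}^0=\textnormal{H}^1=0\). This is exactly the standard route, and the exact sequence you invoke is the same Parshin-type sequence the paper itself uses later in the proof of Lemma \ref{lemma: rho_determines_A}, so it is squarely within the paper's toolkit.

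The second half is where your proposal stops short. You correctly set up the expansion map (the paper's Remark \ref{rem: taylor series in p} makes this precise via pullback to the infinitesimal neighbourhoods \(P_k\) and Künneth, which is a cleaner formulation than an appeal to the identity principle, though injectivity does follow as you say from torsion-freeness and irreducibility), and you correctly observe that the Lagrangian splitting forces a unique dual family \(\{f_{k\ell}\}\) whose canonical tensor is \(\sum f_{k\ell}\ot y^k b_\ell\). But the identification of the Taylor expansion of \(\rho\) with that canonical tensor is asserted rather than proved: you write that ``matching this against the geometric definition of \(\rho\) \ldots gives the claimed expansion'' and then flag this matching as ``the hard part.'' That matching \emph{is} the content of Theorem 6.4 of \cite{burban_galinat}: one has to start from the actual construction of \(\rho\) (the section singled out by the residue and evaluation maps along the diagonal divisor \(D\), normalised by the Casimir element on \(D\)) and show that pairing its second tensor leg against \(u^{k'}b_{\ell'}\) under \(B_p\) extracts an element of \(\g(\rho)\) satisfying the duality relations. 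Without carrying out that computation the claimed formula for the image of \(\rho\) is not established; everything else in your proposal is sound.
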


\begin{remark}\label{rem: taylor series in p}
Let us clarify what we mean by Taylor expansion in the second coordinate. Let \(P_k = \textnormal{Spec}(\widehat{O}_p/\mathfrak{m}^k_p\widehat{O}_p)\) and \(\iota_k \colon P_k \longrightarrow E\) be the injection, mapping the closed point of \(P_k\) to \(p\). Then we can consider the pull-back with respect to \(\textnormal{id}_{\Breve{E}\setminus\{p\}}\times \iota_k\) to obtain the morphism
\begin{align}
    \Gamma(\Breve{E}\times \Breve{E},\Breve{\mathcal{A}} \boxtimes \Breve{\mathcal{A}}(D)) \longrightarrow \Gamma(\Breve{E}\setminus \{p\} \times P_k,\Breve{\mathcal{A}} \boxtimes \Breve{\mathcal{A}}(D)) \cong  \Gamma(\Breve{E}\setminus \{p\},\Breve{\mathcal{A}}) \otimes\widehat{\mathcal{A}}_p/\mathfrak{m}^k_p\widehat{\mathcal{A}}_p ,
\end{align}
where we have used that \((\Breve{E}\setminus \{p\} \times P_k)\cap D) = \emptyset\) and applied the Künneth isomorphism. Mapping \(\Gamma(\Breve{E}\setminus \{p\},\Breve{\mathcal{A}})\) via \(\xi\) to \(\g(\!(u)\!)\), using \(\widehat{\mathcal{A}}_p/\mathfrak{m}^k_p\widehat{\mathcal{A}}_p \cong \g[u]/u^k\g[u]\) and applying the projective limit with respect to \(k\), yields the desired injection  
\(\Gamma(\Breve{E}\times \Breve{E},\Breve{\mathcal{A}} \boxtimes \Breve{\mathcal{A}}(D)) \longrightarrow (\g \ot \g)(\!(x)\!)[\![y]\!]\).
\end{remark}

The theorem suggests, that the geometric \(r\)-matrix \(\rho\) actually determines \(\mathcal{A}\) completely. Our next goal is to formalise this idea. The construction we present is known in other situations, see e.g. \cite{mumford}. 
The algebras \(O\) and \(\g(\rho)\) inherit
the ascending filtrations from the natural filtrations of \(\CC(\!(u)\!)\) and \(\g(\!(u)\!)\), namely we have 
\(O_j \coloneqq O \cap u^{-j}\CC[\![u]\!]\), \(\g(\rho)_j \coloneqq \g(\rho) \cap u^{-j}\g[\![u]\!]\)
and
\begin{align}
    \ldots = 0 = O_{-1} \subseteq \CC = O_0 \subseteq O_1 \subseteq \ldots,
    \qquad
    \ldots = 0 = \g(\rho)_0 \subseteq \g(\rho)_1 \subseteq \g(\rho)_2 \subseteq \ldots,
\end{align}
such that \(O_jO_k \subseteq O_{k+j}, O_j\g(\rho)_k \subseteq \g(\rho)_{j+k}\) and \([\g(\rho)_j,\g(\rho)_k] \subseteq \g(\rho)_{j+k}\).
Therefore, we can consider the associated graded objects%
\footnote{These should not be confused with the graded objects associated to modules with a descending filtration.} \(\textnormal{gr}(O)\) and \(\textnormal{gr}(\g(\rho))\), given by
\begin{equation}
\textnormal{gr}(O) \coloneqq \bigoplus^\infty_{j = 0}O_j
\ \text{ and } \ 
\textnormal{gr}(\g(\rho)) \coloneqq \bigoplus_{j = 0}^\infty\g(\rho)_j.
\end{equation}
Note that \(\textnormal{gr}(\g(\rho))\) is a graded Lie algebra over the graded \(\CC\)-algebra \(\textnormal{gr}(O)\). Let us denote by \(\textnormal{gr}(\g(\rho))^{\sim}\) the associated quasi-coherent sheaf of Lie algebras on \(\textnormal{Proj}(\textnormal{gr}(O))\) (see e.g. \cite[Section II.5]{hartshorne}).
\begin{lemma} \label{lemma: rho_determines_A}
  We have \(E = \textnormal{Proj}(\textnormal{gr}(O))\) and the formal trivialisation \(\xi\) induces an isomorphism \( \mathcal{A} \longrightarrow \textnormal{gr}(\g(\rho))^{\sim}\) of sheaves of Lie algebras, which we again denote by \(\xi\).
\end{lemma}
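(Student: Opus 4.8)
The plan is to recognise \(\textnormal{gr}(O)\) and \(\textnormal{gr}(\g(\rho))\) as the section (Rees) algebra and the section module attached to the ample divisor \(p\), and then to invoke the standard dictionary between coherent sheaves on a projective scheme and graded modules over its homogeneous coordinate ring (see \cite{hartshorne, mumford}).

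First I would identify the filtered pieces geometrically. Since \(u \in \widehat{O}_p \cong \CC[\![u]\!]\) is a local parameter at \(p\) with \(u(p) = 0\), the embedding \(O \hookrightarrow \widehat{Q}_p = \CC(\!(u)\!)\) records the order of the pole of a rational function at \(p\): a function regular on \(E_\circ = E \setminus \{p\}\) lies in \(u^{-j}\CC[\![u]\!]\) exactly when it has a pole of order at most \(j\) at \(p\). Hence
\begin{equation*}
O_j = O \cap u^{-j}\CC[\![u]\!] = \Gamma(E, \mathcal{O}_E(jp)),
\end{equation*}
so that \(\textnormal{gr}(O) = \bigoplus_{j \geq 0} \Gamma(E, \mathcal{O}_E(jp))\) is precisely the section ring of the invertible sheaf \(\mathcal{O}_E(p)\). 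As \(E\) is integral of arithmetic genus \(1\) and \(\deg \mathcal{O}_E(p) = 1 > 0\), the sheaf \(\mathcal{O}_E(p)\) is ample, and the standard identification of a projective scheme with the \(\textnormal{Proj}\) of the section ring of an ample line bundle gives \(E = \textnormal{Proj}(\textnormal{gr}(O))\).

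For the sheaf I would argue analogously, using the formal trivialisation to transport pole orders. Since \(\xi\) is \(\widehat{O}_p\)-linear and reduces to the identity on \(\mathcal{A}|_p \cong \g\), it carries the lattice \(\widehat{\mathcal{A}}_p\) onto \(\g[\![u]\!]\), hence matches the pole-order filtration of \(A_K\) at \(p\) with the filtration by \(u^{-j}\g[\![u]\!]\) on \(\g(\!(u)\!)\). Consequently a section in \(\Gamma(E_\circ, \mathcal{A})\) lies in \(\g(\rho)_j\) if and only if it extends to a global section of \(\mathcal{A}(jp) = \mathcal{A} \otimes \mathcal{O}_E(jp)\), so that
\begin{equation*}
\g(\rho)_j = \xi\bigl(\Gamma(E, \mathcal{A}(jp))\bigr)
\quad\text{and}\quad
\textnormal{gr}(\g(\rho)) \cong \bigoplus_{j \geq 0} \Gamma(E, \mathcal{A}(jp)),
\end{equation*}
the latter an isomorphism of graded \(\textnormal{gr}(O)\)-modules induced by \(\xi\). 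The right-hand side is the graded module \(\Gamma_*(\mathcal{A})\) of twisted global sections, and the sheaf–module correspondence on \(\textnormal{Proj}(\textnormal{gr}(O)) = E\), i.e. the identity \(\Gamma_*(\mathcal{A})^{\sim} \cong \mathcal{A}\) for coherent \(\mathcal{A}\), then yields \(\textnormal{gr}(\g(\rho))^{\sim} \cong \mathcal{A}\). Because \(\xi\) is an isomorphism of Lie algebras and the brackets respect the filtration (\([\g(\rho)_j, \g(\rho)_k] \subseteq \g(\rho)_{j+k}\)), this isomorphism is one of sheaves of Lie algebras, as desired.

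The main obstacle I anticipate is not the Lie-theoretic bookkeeping, which is routine, but the non-standard grading: the section ring \(\textnormal{gr}(O)\) is generated by the Weierstraß functions \(x\) and \(y\) in degrees \(2\) and \(3\) rather than in degree \(1\) (indeed \(O_1 = \CC = O_0\)), so the clean form of the sheaf–module dictionary cannot be applied verbatim. To handle this I would pass to a Veronese subring: choose \(d\) large enough that \(\mathcal{O}_E(dp)\) is very ample, use \(\textnormal{Proj}(\textnormal{gr}(O)) = \textnormal{Proj}(\textnormal{gr}(O)^{(d)})\) together with the matching of the relevant categories of modules, and apply the degree-\(1\) correspondence of \cite{hartshorne} there. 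The resulting sheaf is independent of \(d\) and recovers \(\mathcal{A}\); the one point requiring genuine care is checking that these Veronese identifications are compatible with the isomorphism induced by \(\xi\).
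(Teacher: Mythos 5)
Your argument is correct, but it follows a genuinely different route from the paper's. You recognise \(\textnormal{gr}(O)=\bigoplus_j\Gamma(E,\mathcal{O}_E(jp))\) as the section ring of the ample degree-one divisor \(p\) and \(\textnormal{gr}(\g(\rho))\) as the module of twisted global sections \(\Gamma_*(\mathcal{A})=\bigoplus_j\Gamma(E,\mathcal{A}(jp))\), and then invoke the global dictionary \(E=\textnormal{Proj}\) of the section ring and \(\Gamma_*(\mathcal{A})^{\sim}\cong\mathcal{A}\), with a Veronese reduction to repair the failure of degree-one generation (indeed \(O_1=\CC\)). The paper instead argues chart by chart: it identifies \(E_\circ=\textnormal{Spec}(O)\) with an affine open of \(\textnormal{Proj}(\textnormal{gr}(O))\), uses the Parshin exact sequence to show \(\Gamma(U,\mathcal{F})=\Gamma(U\setminus\{p\},\mathcal{F})\cap\widehat{\mathcal{F}}_p\), identifies the coordinate ring of the affine neighbourhood \(\textnormal{D}(a)\cup\{p\}\) of \(p\) with \((\textnormal{gr}(O)[a^{-1}])_0\) (citing Goodman), and concludes \(E=\textnormal{Proj}(\textnormal{gr}(O))\) from the fact that a projective curve cannot be a proper open subset of another; the sheaf isomorphism is then read off on the two charts \(E_\circ\) and \(\textnormal{D}_+(a)\) directly from the definition of \((\cdot)^{\sim}\). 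Your approach is more conceptual and makes the role of ampleness explicit, at the cost of the Veronese bookkeeping you correctly flag; the paper's is more self-contained and sidesteps degree-one generation entirely. Note also that the one nontrivial input you leave implicit — that a section over \(E_\circ\) whose image in \(\widehat{\mathcal{A}}_p\otimes\widehat{Q}_p\) has pole order at most \(j\) extends to a global section of \(\mathcal{A}(jp)\) — is exactly the Parshin-type statement the paper proves via \cite[Proposition 3]{parshin}, so you should cite or prove that step rather than treat it as immediate; with that supplied, your proof is complete.
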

\begin{proof}
We can view \(E_\circ = \textnormal{Spec}(O)\) as an affine open subscheme of \(\textnormal{Proj}(\textnormal{gr}(O))\) by identifying any prime ideal \(\mathfrak{p}\) of \(O\) (which inherits a natural filtration) with \(\textnormal{gr}(\mathfrak{p})\). Under this identification we have
\(\Gamma(E_\circ,\textnormal{gr}(\g(\rho))^{\sim}) = \g(\rho)\), which is most easily seen by using the definitions in \cite[Section II.5]{hartshorne}. In paticular, \(\xi \colon \Gamma(E_\circ,\mathcal{A}) \longrightarrow \g(\rho) = \Gamma(E_\circ,\textnormal{gr}(\g(\rho))^{\sim})\) is an isomorphism of Lie algebras over \(O\).

By \cite[Proposition 3]{parshin} for any coherent sheaf \(\mathcal{F}\) on an open neighbourhood \(U\) of \(p\) the sequence
\begin{align}
    0 \longrightarrow \Gamma(U,\mathcal{F}) \longrightarrow \Gamma(U\setminus\{p\},\mathcal{F})\oplus \hat{\mathcal{F}}_p \longrightarrow \hat{\mathcal{F}}_p \ot_{\widehat{O}_p}\widehat{Q}_p
\end{align}
is exact. This implies \(\Gamma(U,\mathcal{F}) = \Gamma(U\setminus\{p\},\mathcal{F})\cap \hat{\mathcal{F}}_p\).

Thus, for any \(a\in O_j \setminus O_{j-1}\) we have that \(\textnormal{D}(a)\cup \{p\}\) is an affine (see \cite[Proposition 5]{goodman}) open neighbourhood of \(p\) in \(E\) with the coordinate ring \(O[a^{-1}] \cap \CC[\![u]\!] = (\textnormal{gr}(O)[a^{-1}])_0\), where \(a \in \textnormal{gr}(O)\) is taken to have degree \(j\).
Thus we have a natural identification of affine schemes \(E \supseteq \textnormal{D}(a)\cup \{p\} = \textnormal{D}_+(a) \subseteq \textnormal{Proj}(\textnormal{gr}(O))\)
and we see that \(E \subseteq \textnormal{Proj}(\textnormal{gr}(O))\).
Since now \(E\) is a projective curve identified with an open subset in the projective curve \(\textnormal{Proj}(\textnormal{gr}(O))\),
\cite[Proposition 1]{goodman} implies
the equality \(E = \textnormal{Proj}(\textnormal{gr}(O))\).

Finally, by definition we have \(\Gamma(\textnormal{D}_+(a), \textnormal{gr}(\g(\rho))^{\sim}) = \Gamma( \textnormal{D}(a)\cup \{p\},\textnormal{gr}(\g(\rho))^{\sim}) = (\g(\rho)[a^{-1}])_0 = \g(\rho)[a^{-1}] \cap \g[\![u]\!]\)
and hence we obtain the isomorphism
\begin{equation*}
    \xi\colon \Gamma(\textnormal{D}(a)\cup \{p\},\mathcal{A}) = \Gamma(\textnormal{D}(a),\mathcal{A})\cap \hat{\mathcal{A}}_p \longrightarrow \g(\rho)[a^{-1}] \cap \g[\![u]\!]=\Gamma(\textnormal{D}(a)\cup \{p\},\textnormal{gr}(\g(\rho))^{\sim}).
\end{equation*}
This ends the proof, because \(E = \textnormal{Proj}(\textnormal{gr}(O)) = E_\circ \cup \textnormal{D}_+(a)\).
\end{proof}

\subsection{Extension property of formal local equivalences}

Now let us consider two coherent sheaves of Lie algebras \(\mathcal{A}_1\) and \(\mathcal{A}_2\) on \(E\) satisfying the conditions (i) - (iii) of Section \ref{sec: survey on geometry} and denote by \(\rho_1\) and \(\rho_2\) the corresponding geometric \(r\)-matrices. Fix formal trivialisations \(\xi_i\) of \(\mathcal{A}_i\) at \(p\) and consider the corresponding isomorphisms \(\xi_i \colon \mathcal{A}_i \longrightarrow \textnormal{gr}(\g(\rho_i))^\sim\) for \(i = 1,2\), where \(\g(\rho_i)= \textnormal{Span}_{\CC}(\{f_{k\ell}^{(i)}\}) \subseteq \g(\!(u)\!)\) is the image of \(\Gamma(E_\circ,\mathcal{A}_i)\) and
\begin{equation}
    \sum_{k = 0}^\infty \sum_{\ell = 1}^n f^{(i)}_{k\ell} \ot y^kb_\ell \in (\g \otimes \g)(\!(x)\!)[\![y]\!]
\end{equation}
are the Taylor expansions of \(\rho_i\) described in Theorem \ref{thm: geomrmat in p}.
We are now in a position to show that any formal
equivalence of \(\rho_1\) and \(\rho_2\) at \(p\)
extends to a global isomorphism of the
corresponding sheaves.

\begin{theorem}\label{thm: lifting equivalences}
Let \(\phi \colon \g[\![u]\!] \longrightarrow \g[\![u]\!]\) be a \(\CC[\![u]\!]\)-linear automorphism of Lie algebras such that 
\begin{equation}\label{eq: local equivalence}
    \sum_{k = 0}^\infty\sum_{\ell = 1}^n \phi(f^{(1)}_{k\ell}) \ot \phi(y^kb_\ell) = \sum_{k = 0}^\infty\sum_{\ell = 1}^n f^{(2)}_{k\ell} \ot y^kb_\ell,
\end{equation}
where we consider the \(\CC(\!(u)\!)\)-linear expansion of \(\phi\) in the first tensor factor.
Then there is an isomorphism \(\psi \colon \mathcal{A}_1 \longrightarrow \mathcal{A}_2\) of coherent sheaves of Lie algebras such that
\(\xi_2\hat{\psi}_p\xi_1^{-1} = \phi\) and \((\psi \boxtimes \psi) \rho_1 = \rho_2\), where we consider the linear extension with respect to the rational functions on \(\Breve{E} \times \Breve{E}\).
\end{theorem}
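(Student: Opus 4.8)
The plan is to reduce everything to the graded, \(\mathrm{Proj}\)-theoretic description of the sheaves \(\mathcal{A}_i\) furnished by Lemma \ref{lemma: rho_determines_A}. Via the formal trivialisations we already have isomorphisms of sheaves of Lie algebras \(\xi_i \colon \mathcal{A}_i \longrightarrow \mathrm{gr}(\g(\rho_i))^\sim\) over \(E = \mathrm{Proj}(\mathrm{gr}(O))\), so it suffices to manufacture a graded isomorphism \(\mathrm{gr}(\g(\rho_1)) \to \mathrm{gr}(\g(\rho_2))\) of graded Lie algebras over \(\mathrm{gr}(O)\), sheafify it, and transport it through \(\xi_1\) and \(\xi_2\) to define \(\psi \coloneqq \xi_2^{-1}\,\mathrm{gr}(\phi)^\sim\,\xi_1\). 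The candidate for the graded map is induced by \(\phi\) itself, after extending \(\phi\) \(\CC(\!(u)\!)\)-linearly to \(\g(\!(u)\!)\).

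First I would extend \(\phi\) to a \(\CC(\!(u)\!)\)-linear Lie algebra automorphism of \(\g(\!(u)\!)\); this is automatic since \(\g(\!(u)\!) = \g[\![u]\!] \otimes_{\CC[\![u]\!]} \CC(\!(u)\!)\). As \(O \subseteq \CC(\!(u)\!)\), the extended map is in particular \(O\)-linear, and because \(\phi(\g[\![u]\!]) = \g[\![u]\!]\) it preserves each step \(u^{-j}\g[\![u]\!]\) of the natural filtration. The crucial input is that \(\phi\) carries \(\g(\rho_1)\) onto \(\g(\rho_2)\): since \(\phi\) is an automorphism of the second tensor factor \(\g[\![y]\!]\), the family \(\{\phi(y^k b_\ell)\}\) is again a topological basis, and comparing the two expansions of the same element in \eqref{eq: local equivalence} shows that the \(\CC\)-spans of the respective first tensor factors coincide, i.e. \(\phi(\g(\rho_1)) = \g(\rho_2)\) as subspaces of \(\g(\!(u)\!)\). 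Combining \(O\)-linearity, filtration preservation and this equality gives \(\phi(\g(\rho_1)_j) = \g(\rho_2)_j\) for all \(j\), so \(\phi\) induces a graded \(\mathrm{gr}(O)\)-linear Lie algebra isomorphism \(\mathrm{gr}(\phi)\colon \mathrm{gr}(\g(\rho_1)) \to \mathrm{gr}(\g(\rho_2))\), whose associated sheaf morphism \(\mathrm{gr}(\phi)^\sim\) on \(E\) defines \(\psi\) as above.

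It then remains to verify the two compatibility statements. For \(\xi_2\,\hat\psi_p\,\xi_1^{-1} = \phi\) I would complete \(\psi = \xi_2^{-1}\,\mathrm{gr}(\phi)^\sim\,\xi_1\) at \(p\): under the canonical identification \(\widehat{\mathrm{gr}(\g(\rho_i))^\sim}_p \cong \g[\![u]\!]\) coming from the proof of Lemma \ref{lemma: rho_determines_A} (where the sections over \(\mathrm{D}(a) \cup \{p\}\) are \(\g(\rho_i)[a^{-1}] \cap \g[\![u]\!]\)), the completion of \(\xi_i\) is precisely the formal trivialisation and the completion of \(\mathrm{gr}(\phi)^\sim\) is exactly \(\phi\), yielding the identity. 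For \((\psi \boxtimes \psi)\rho_1 = \rho_2\) I would invoke the injectivity of the Taylor-expansion map of Theorem \ref{thm: geomrmat in p}: the expansion of \((\psi \boxtimes \psi)\rho_1\) is \((\phi \otimes \phi)\) applied to that of \(\rho_1\), which by \eqref{eq: local equivalence} equals the expansion of \(\rho_2\). The main obstacle I anticipate is exactly this last bookkeeping at the smooth point \(p\): one must check that the abstract \(\mathrm{Proj}\)-sheaf isomorphism, assembled from purely graded data, has its completion and its Taylor expansion at \(p\) agree with \(\phi\) on the nose, and this forces a careful tracking of the identifications in Lemma \ref{lemma: rho_determines_A} together with the compatibility of the affine chart \(\mathrm{D}(a) \cup \{p\}\) with the \(\mathfrak{m}_p\)-adic completion.
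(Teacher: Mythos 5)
Your proposal follows essentially the same route as the paper: establish \(\phi(\g(\rho_1)) = \g(\rho_2)\) from \eqref{eq: local equivalence}, observe that \(\phi\) preserves the filtration, pass to the graded modules and sheafify via Lemma \ref{lemma: rho_determines_A} to set \(\psi = \xi_2^{-1}\phi^{\sim}\xi_1\), and then verify the compatibility of \(\psi\) with the geometric \rmatrs (the paper cites a lemma from Galinat's thesis where you invoke injectivity of the Taylor expansion map, but these amount to the same bookkeeping). The only step stated too quickly is the claim that comparing expansions makes the two spans ``coincide'': the direct coefficient comparison only yields \(\g(\rho_2) \subseteq \phi(\g(\rho_1))\), and the reverse inclusion needs an extra line --- either inverting the triangular change of basis \(\{y^k b_\ell\} \mapsto \{\phi(y^k b_\ell)\}\) (possible since \(\phi_0 \in \textnormal{Aut}(\g)\)), or, as the paper does, noting that both subspaces are complements to \(\g[\![u]\!]\) in \(\g(\!(u)\!)\).
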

\begin{proof}
Write \(\phi = \sum_{j = 0}^\infty u^j \phi_j \in \textnormal{End}(\g)[\![u]\!]\). Then
\begin{equation}
\begin{split}
    \sum_{k = 0}^\infty\sum_{\ell = 1}^n \phi(f^{(1)}_{k\ell}) \ot \phi(y^kb_\ell) &= \sum_{k = 0}^\infty\sum_{\ell = 1}^n\sum_{j = 0}^k \phi(f^{(1)}_{(k-j)\ell}) \ot y^k \phi_j(b_\ell)\\&=\sum_{k = 0}^\infty\sum_{\ell' = 1}^n\left(\sum_{\ell = 1}^n\sum_{j = 0}^k a^j_{\ell'\ell}\phi(f^{(1)}_{(k-j)\ell})\right) \ot y^k b_{\ell'},
\end{split}
\end{equation}
where \(\phi_j(b_\ell) = \sum^n_{\ell' = 1} a^j_{\ell'\ell}b_{\ell'}\). 
This shows that \(\g(\rho_2) \subseteq \phi(\g(\rho_1))\) by comparing coefficients in \eqref{eq: local equivalence}. Since \(\g[\![u]\!] \add \g(\rho_2) = \g[\![u]\!] \add \phi(\g(\rho_1))\), we see that
\(\g(\rho_2) = \phi(\g(\rho_1))\).

Clearly the \(\CC(\!(u)\!)\)-linear extension of \(\phi\) preserves the filtration of \(\g(\!(u)\!)\) and hence induces a graded isomorphism of Lie algebras \(\phi \colon \textnormal{gr}(\g(\rho_1)) \longrightarrow \textnormal{gr}(\g(\rho_2))\). Since the procedure \((\cdot)^\sim\) of associating a quasi-coherent sheaf to a graded module on \(E = \textnormal{Proj}(\textnormal{gr}(O))\) is functorial, we get an isomorphism \(
\phi^\sim \colon \textnormal{gr}(\g(\rho_1))^\sim\longrightarrow \textnormal{gr}(\g(\rho_2))^\sim\). Thus we can define \(\psi \coloneqq \xi_2^{-1}\phi^\sim\xi_1 \colon \mathcal{A}_1 \longrightarrow \mathcal{A}_2\). Applying \cite[Lemma 1.10]{galinat_thesis} we see that \((\psi \boxtimes \psi) \rho_1 = \rho_2\). 
\end{proof}

\begin{remark}\label{rem: psi value at p}
Since the induced isomorphisms in equation \eqref{eq: formal trivialisation} of the formal trivialisations \(\xi_i\) (\(i = 1,2\)) give the identity,
we have \(\phi_0 = \psi|_p\).
\end{remark}

\subsection{Proof of the main classification theorem}
Fix an automorphism \( \sigma \in \textnormal{Aut}_{\CC - \textnormal{LieAlg}}(\g) \) of finite order \( m \) and an outer automorphism \( \nu \) from the coset \( \sigma \textnormal{Inn}_{\CC - \textnormal{LieAlg}} (\g) \).
Let \(t \in \Loop^\sigma \ot \Loop^\sigma\)
be a classical twist of the standard
Lie bialgebra structure \(\delta^\sigma_0\)
on \( \Loop^\sigma \).
In view of Theorem \ref{thm: Gauge equivalence}, to prove the first part of Theorem \ref{thm: classification} we need to show that there exists a regular equivalence \(\phi \in\textnormal{Aut}_{O^\sigma-\textnormal{LieAlg}}(\Loop^\sigma)\), taking values in \(\textnormal{Inn}_{\CC - \textnormal{LieAlg}} (\g) \),
and a BD quadruple \( Q \)
such that
\begin{equation}
(\psi(x) \ot \psi(y))r^\sigma_t(x,y)
=
r^\sigma_Q(x,y)
\end{equation}
for all \(x,y\in\CC^*, x^{m} \ne y^{m}\).
Combining the results of Section \ref{sec: pseudoquasitriangular structure} and \cite{belavin_drinfeld_solutions_of_CYBE_paper}, we get the following statement.

\begin{lemma}\label{lem: holomorphic equivalence t,Q}
  There exists a holomorphic function \(\phi \colon \CC \longrightarrow \textnormal{Inn}_{\CC - \textnormal{LieAlg}}(\g)\) such that
  \begin{equation}\label{eq: holomorphic equivalence t,Q}
      (\phi(u) \ot \phi(v))r^\sigma_t(e^{u/{m}}, e^{v/{m}}) 
      =
      r^\sigma_Q(e^{u/{m}}, e^{v/{m}}).
  \end{equation}
\end{lemma}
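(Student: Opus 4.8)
The plan is to build the inner holomorphic equivalence of \eqref{eq: holomorphic equivalence t,Q} as a composition of three equivalences, each already available in the excerpt: stripping the twist to obtain a difference-dependent trigonometric \rmatr, applying the Belavin--Drinfeld normal form, and regrading to the principal grading and back. Throughout I may assume \(\sigma = \sigma_{(s;|\nu|)}\), the general case following by conjugating by the intertwiner \(\rho\), which sends inner automorphisms to inner automorphisms.

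First I would apply Theorem \ref{thm: r(x,y)=X(u-v)} to \(r = r_t^\sigma\). This produces a holomorphic map \(\varphi \colon \CC \longrightarrow \textnormal{Inn}_{\CC - \textnormal{LieAlg}}(\g)\) and a trigonometric \rmatr \(X \colon \CC \longrightarrow \g \ot \g\), which in particular is non-degenerate by the definition of a trigonometric solution in \cite{belavin_drinfeld_solutions_of_CYBE_paper}, such that
\[
\seq{\varphi(u)^{-1} \ot \varphi(v)^{-1}} r_t^\sigma\seq{e^{u/m}, e^{v/m}} = X(u-v).
\]
Since regrading identifies \(\Loop^\sigma\) with \(\Loop^\nu\), the affine type attached to \(X\) is that of the Dynkin diagram of \(\Loop^\nu\), so the Belavin--Drinfeld classification \cite{belavin_drinfeld_solutions_of_CYBE_paper} furnishes a BD quadruple \(Q\) on this diagram together with a holomorphic equivalence carrying \(X\) to the representative \(r_Q^{\sigma_{(\mathbf{1};|\nu|)}}\), regarded as a function of the difference of its parameters. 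A holomorphic family of automorphisms of \(\g\) lands in a single connected component of \(\textnormal{Aut}_{\CC - \textnormal{LieAlg}}(\g)\); absorbing the resulting fixed diagram-automorphism factor into the choice of the quadruple (replacing \(Q\) by \(\vartheta^{-1}(Q)\)) makes the equivalence inner, i.e. there is a holomorphic \(\chi \colon \CC \longrightarrow \textnormal{Inn}_{\CC - \textnormal{LieAlg}}(\g)\) with
\[
\seq{\chi(u) \ot \chi(v)} X(u-v) = r_Q^{\sigma_{(\mathbf{1};|\nu|)}}\seq{e^{u/|\sigma_{(\mathbf{1};|\nu|)}|}, e^{v/|\sigma_{(\mathbf{1};|\nu|)}|}}.
\]

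It remains to return from the principal grading to the grading attached to \(\sigma\). Applying Lemma \ref{thm: GxG r = r} with \(\sigma' = \sigma_{(\mathbf{1};|\nu|)}\) yields an element \(\mu \in \h\) satisfying
\[
\seq{e^{u\,\ad(\mu)} \ot e^{v\,\ad(\mu)}} r_Q^\sigma\seq{e^{u/m}, e^{v/m}} = r_Q^{\sigma_{(\mathbf{1};|\nu|)}}\seq{e^{u/|\sigma_{(\mathbf{1};|\nu|)}|}, e^{v/|\sigma_{(\mathbf{1};|\nu|)}|}},
\]
where the relation of Lemma \ref{thm: GxG r = r}, originally stated for \(r_0\), extends to \(r_Q\) because regrading sends the twist \(t_Q^\sigma\) to \(t_Q^{\sigma_{(\mathbf{1};|\nu|)}}\) via \eqref{eq: regraded root space}. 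As \(\ad(\mu)\) is inner, \(e^{u\,\ad(\mu)} \in \textnormal{Inn}_{\CC - \textnormal{LieAlg}}(\g)\). Composing the three displayed identities and setting \(\phi(u) \coloneqq e^{-u\,\ad(\mu)}\chi(u)\varphi(u)^{-1}\), a holomorphic map into \(\textnormal{Inn}_{\CC - \textnormal{LieAlg}}(\g)\), gives precisely \eqref{eq: holomorphic equivalence t,Q}.

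I expect the main obstacle to be the second step: one has to check that the Belavin--Drinfeld reduction of \(X\) to its normal form can genuinely be realized by \emph{inner} automorphisms after the diagram automorphism has been absorbed into \(Q\), and that the outer class entering the representative is correctly matched with \(\nu\). This is the only point where the internal structure of the classification in \cite{belavin_drinfeld_solutions_of_CYBE_paper} is used in an essential way; the remaining two steps are formal consequences of results already established in the excerpt.
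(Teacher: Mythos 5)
Your proposal is correct and follows essentially the same route as the paper's proof: first apply Theorem \ref{thm: r(x,y)=X(u-v)} to reduce $r_t^\sigma$ to a difference-dependent trigonometric solution, then invoke the Belavin--Drinfeld classification to pass to the representative $r_Q^{\sigma_{(\mathbf{1};|\nu|)}}$ by an inner holomorphic equivalence, and finally undo the regrading via Lemma \ref{thm: GxG r = r}. Your extra care in justifying that the regrading identity extends from $r_0^\sigma$ to $r_Q^\sigma$ and that the Belavin--Drinfeld equivalence can be taken inner only makes explicit points the paper leaves implicit.
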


\begin{proof}
  By Theorem \ref{thm: r(x,y)=X(u-v)} and
its proof there exists a holomorphic function 
\(\phi_1 \colon \CC \longrightarrow \textnormal{Inn}_{\CC - \textnormal{LieAlg}}(\g)\)
and a trigonometric (in the sense of the Belavin-Drinfeld classification) \rmatr \( X \)
such that
\begin{equation}
\phi_1(0) = \id_\g
\
\text{ and }
\
X(u-v) = (\phi_1(u) \ot \phi_2(y))
r^\sigma_t(e^{u/m},e^{v/m}).
\end{equation}
Furthermore,
it is shown in \cite{belavin_drinfeld_solutions_of_CYBE_paper}
that there is a holomorphic function
\( \phi_2 \colon \CC \longrightarrow
\textnormal{Inn}_{\CC - \textnormal{LieAlg}}(\g)
\)
such that
\begin{equation}
\phi_2(0) = \id_\g
\
\text{ and }
\
(\phi_2(u) \ot \phi_2(v)) X(u-v)
=
r^{\sigma_{(\mathbf{1}, \ord(\nu))}}_Q(e^{u/h}, e^{v/h}),
\end{equation}
where \(h := |\sigma_{(\mathbf{1}, \ord(\nu))}|\).
Combining these results and applying the regrading
scheme from Lemma \ref{thm: GxG r = r} we get the desired holomorphic function.
\end{proof}

Our next goal is to apply Theorem \ref{thm: lifting equivalences} to this holomorphic equivalence to obtain a regular one. Therefore, we need sheaves which give rise to the \(\sigma\)-trigonometric \rmatrs from \eqref{eq: holomorphic equivalence t,Q}. These were constructed in \cite{TBA}. 

\begin{theorem}[{\cite[Theorem 6.9]{TBA}}]
Let \(t\) be a classical twist of \(\delta_0^\sigma\), \(E\) be a nodal Weierstraß cubic with global nonvanishing 1-form \(\omega = d(z^{m})/z^{m}\) under the identification \(\Breve{E} = \textnormal{Spec}(\CC[z^m,z^{-m}])\). Then there exists a coherent sheaf of Lie algebras \(\mathcal{A}_t\) on \(E\), satisfying properties (i)-(iii) of Section \ref{sec: survey on geometry}, such that
\begin{enumerate}
    \item \(\Gamma(\Breve{E},\mathcal{A}_t) = \Loop^\sigma\) and
    \item the isomorphism 
    \begin{equation}
    \Gamma(\Breve{E}\otimes\Breve{E},\Breve{\mathcal{A}}_t \boxtimes \Breve{\mathcal{A}}_t(D)) \cong \left(\frac{1}{(x/y)^m-1}\right) \Loop^\sigma \ot \Loop^\sigma.
    \end{equation}
    maps the geometric \rmatr \(\rho_t\) of \(\mathcal{A}\) to \(r_t^\sigma\).\qed
\end{enumerate}
\end{theorem}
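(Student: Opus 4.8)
The plan is to realise $\mathcal{A}_t$ as the torsion-free coherent sheaf of Lie algebras on $E$ obtained by gluing, at the node $s$, the locally free sheaf on $\Breve{E}$ whose global sections are $\Loop^\sigma$ along the lattice coming from the twisted large-double Manin triple of Remark~\ref{rem: large double}. Identifying $\Breve{E}$ with $\textnormal{Spec}(O^\sigma)$, let $\widetilde{\Loop^\sigma}$ be the rank-$\dim\g$ locally free $\mathcal{O}_{\Breve{E}}$-module attached to the free $O^\sigma$-module $\Loop^\sigma$; the two branches of $s$ are $z^m \to 0$ and $z^m \to \infty$, with formal fibres $\widehat{\Loop}^\sigma_+$ and $\widehat{\Loop}^\sigma_-$. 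Write $\widehat{W}_t \subseteq \widehat{\Loop}^\sigma_+ \times \widehat{\Loop}^\sigma_-$ for the Lagrangian Lie subalgebra attached to $t$ by Theorem~\ref{thm: W <-> t <-> T correspondence} applied to the large-double Manin triple (the twist of $\widehat{W}_0$), and let $\iota_s$ send a local section to its pair of germs at the two branches. I would then define $\mathcal{A}_t$ by
\begin{equation*}
\Gamma(U,\mathcal{A}_t) = \set{f \in \Gamma(U \setminus \set{s}, \widetilde{\Loop^\sigma}) \mid \iota_s(f) \in \widehat{W}_t}
\qquad (s \in U),
\end{equation*}
and $\Gamma(U,\mathcal{A}_t) = \Gamma(U,\widetilde{\Loop^\sigma})$ for $s \notin U$. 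Since $\widehat{W}_t$ is an $\widehat{O}_{E,s}$-lattice and a subalgebra, this $\mathcal{A}_t$ is a coherent torsion-free sheaf of Lie algebras.

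Properties (i)--(iii) then follow directly from the Manin triple $(\widehat{\Loop}^\sigma_+ \times \widehat{\Loop}^\sigma_-, \Delta, \widehat{W}_t)$. For (i) I would invoke the Parshin/Mayer--Vietoris sequence for the decomposition of $E$ into $\Breve{E}$ and the formal neighbourhood of $s$ (as in the proof of Lemma~\ref{lemma: rho_determines_A}, but localised at $s$):
\begin{equation*}
0 \to \textnormal{H}^0(E,\mathcal{A}_t) \to \Loop^\sigma \oplus \widehat{W}_t \xrightarrow{(f,w)\,\mapsto\, \iota_s(f) - w} \widehat{\Loop}^\sigma_+ \times \widehat{\Loop}^\sigma_- \to \textnormal{H}^1(E,\mathcal{A}_t) \to 0.
\end{equation*}
The complementarity $\Delta \add \widehat{W}_t = \widehat{\Loop}^\sigma_+ \times \widehat{\Loop}^\sigma_-$ makes the middle arrow bijective, forcing $\textnormal{H}^0 = \textnormal{H}^1 = 0$. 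Property (ii) is the observation that the fibre of $\widetilde{\Loop^\sigma}$ at $z_0^m \in \CC^*$ is $\Loop^\sigma$ evaluated at $z_0$, i.e. $\g$ with its bracket. Property (iii), isotropy of $(\mathcal{A}_t)_s$, is exactly the statement that $\widehat{W}_t$ is isotropic, since the residue form $B^\omega_s$ for $\omega = d(z^m)/z^m$ is the form $\mathcal{B}$ of Remark~\ref{rem: large double}. Finally $\Gamma(\Breve{E},\mathcal{A}_t) = \Loop^\sigma$ holds by construction, which is part~1.

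For part~2 I would compute the geometric $r$-matrix $\rho_t$ through its Taylor expansion at the smooth point $p$, which under $\Breve{E} = \CC^*$ sits at $z^m = 1$ with local parameter $u = m\log z$, $du = \omega$, so $z = e^{u/m}$. By Theorem~\ref{thm: geomrmat in p}, $\rho_t = \sum_{k,\ell} f_{k\ell} \ot y^k b_\ell$ with $\set{f_{k\ell}}$ the $B_p$-dual basis of $\g(\rho_t) = \xi(\Gamma(E_\circ,\mathcal{A}_t))$, where $\Gamma(E_\circ,\mathcal{A}_t)$ consists of the $\Loop^\sigma$-valued functions on $\CC^* \setminus \set{1}$ whose node-germs lie in $\widehat{W}_t$. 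As the Taylor map of Theorem~\ref{thm: geomrmat in p} is injective, it suffices to show this series agrees with the expansion \eqref{eq: tensor expr to r_0 sigma} of $r_t^\sigma$; equivalently, by Lemma~\ref{lemma: rho_determines_A} together with \eqref{eq: Wt using Rt}, that the projector cut out by $\g(\rho_t)$ is the operator $R_t = \pi_\h/2 + \pi_- + \Psi(t)$ of \eqref{eq: def eq for r_t}. I would also verify that the pole of $\rho_t$ along the diagonal divisor $D$ matches the factor $((x/y)^m - 1)^{-1}$.

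The hard part is this last identification, because the two $r$-matrices are anchored at different points of $E$: $\rho_t$ is characterised by the residue pairing $B_p$ at $p$ (the point $z^m = 1$), whereas $r_t^\sigma$, $R_t$ and the series \eqref{eq: tensor expr to r_0 sigma} are governed by the form $B$ of \eqref{eq: form on L sigma}, a residue at $z = 0$, i.e. at a branch of the node rather than at $p$. The bridge I would exploit is the global residue theorem on the Weierstraß cubic: the Rosenlicht residues of any rational section of $\Breve{\mathcal{A}}_t \boxtimes \Breve{\mathcal{A}}_t(D)$ at $p$, at the node $s$, and along $D$ sum to zero. This lets me transport the defining duality $B_p(f_{k\ell}, u^{k'} b_{\ell'}) = \delta_{kk'}\delta_{\ell\ell'}$ into the pairing $B(b^i_{(\alpha,k)}, b^j_{(-\alpha,-k)}) = \delta_{ij}$ underlying \eqref{eq: tensor expr to r_0 sigma}, thereby matching the two expansions and closing part~2. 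Carrying out this residue bookkeeping carefully, while keeping track of the quasi-periodicity relating the $m$ preimages of $p$ in the $z$-plane, is the technical core of the argument.
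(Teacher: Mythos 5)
Note first that this paper does not prove the statement at all: it is quoted verbatim from \cite[Theorem 6.9]{TBA} (hence the \(\blacksquare\) in the statement itself), and the only in-paper indication of the construction is Remark \ref{rem: large double}, which says the sheaves arise by formally gluing twisted versions of \(\widehat{W}_0\) with \(\Loop^\sigma \cong \Delta\) over the nodal cubic. Your gluing ansatz is exactly that construction, so the route is right in outline; the problem is that the two hardest steps are asserted rather than proved. The first and most serious gap is the parenthetical ``since \(\widehat{W}_t\) is an \(\widehat{O}_{E,s}\)-lattice and a subalgebra''. Theorem \ref{thm: W <-> t <-> T correspondence} applied to the Manin triple \eqref{eq: geometric manin triple} gives you \(\widehat{W}_t = \{Tw - w \mid w \in \widehat{W}_0\}\) only as a Lagrangian Lie subalgebra, complementary to \(\Delta\) and commensurable with \(\widehat{W}_0\); it says nothing about stability under \(\widehat{O}_{E,s} \cong \CC[\![a,b]\!]/(ab)\) (with \(a = z^m\), \(b = z^{-m}\)). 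Since \(\Delta\) itself is \emph{not} an \(\widehat{O}_{E,s}\)-module, one has \(a\cdot(Tw - w) = \bigl(T(aw) - aw\bigr) + \bigl(aTw - T(aw)\bigr)\), and there is no formal reason for the correction term \(aTw - T(aw)\) to lie in \(\widehat{W}_t\); equivalently, using the Lagrangian property, stability amounts to the identity \(\mathcal{B}\bigl(aTw_1 - T(aw_1),\, Tw_2 - w_2\bigr) = 0\) for all \(w_1, w_2 \in \widehat{W}_0\), which has to be extracted from the specific shape of \(T = \Psi(t)\) with \(t \in \Loop^\sigma \ot \Loop^\sigma\). Commensurability with the module \(\widehat{W}_0\) only yields stability under \(\mathfrak{m}_s^N\) for \(N \gg 0\), which would produce a sheaf on a more degenerate curve, not on \(E\). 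Without the module property, your \(\mathcal{A}_t\) is not even an \(\mathcal{O}_E\)-module, and the subsequent Parshin/Mayer--Vietoris argument for property (i) (which is otherwise fine: bijectivity of the middle arrow is precisely \(\Delta \add \widehat{W}_t = \widehat{\Loop}^\sigma_+ \times \widehat{\Loop}^\sigma_-\)) has nothing to apply to. This lattice property is the technical heart of the geometrization in \cite{TBA} and cannot be waved through.

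The second gap is part 2, which you explicitly leave as ``residue bookkeeping'' constituting ``the technical core''; in other words, the identification \(\rho_t = r^\sigma_t\) is not actually proved. Moreover, your intended comparison target is mis-anchored: \eqref{eq: tensor expr to r_0 sigma} is the expansion of \(r^\sigma_0\) at \(y = 0\), a branch of the \emph{node}, whereas Theorem \ref{thm: geomrmat in p} characterizes \(\rho_t\) by \(B_p\)-dual bases at the smooth point \(p\), i.e.\ by the Taylor expansion of \(r^\sigma_t(e^{u/m}, e^{v/m})\) at \(v = 0\) (so at \(y = 1\)). The natural way to close this is not a global residue-theorem transport between \(p\), \(s\) and \(D\), but a direct verification that under \(\xi\colon f(z) \mapsto f(e^{u/m})\) the image \(\g(\rho_t)\) of \(\Gamma(E_\circ, \mathcal{A}_t)\) is complementary to \(\g[\![u]\!]\) in \(\g(\!(u)\!)\) (again a consequence of \(\Delta \add \widehat{W}_t\) together with the cohomology vanishing) and that the coefficients of the \(v\)-expansion of \(r^\sigma_t(e^{u/m}, e^{v/m})\) are \(B_p\)-dual to the \(u^k b_\ell\) --- a computation in the \(u\)-coordinate using \eqref{eq: def eq for r_t} and \eqref{eq: Wt using Rt}, not the node-side series. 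As written, the proposal correctly assembles the scaffolding (properties (ii) and (iii) do follow as you say, the form \(B^\omega_s\) for \(\omega = d(z^m)/z^m\) being \(\mathcal{B}\) up to the factor \(m\)), but both load-bearing steps of the theorem are missing.
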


We may identify the smooth point at infinity with \(1 \in \CC^* = \Breve{E} = \textnormal{Spec}(\CC[z^m,z^{-m}])\). The algebra homomorphism \(\CC[z^{m},z^{-m}] \longrightarrow \CC[\![u]\!]\) given by
\begin{align}
    z^m \longmapsto e^u = \sum_{k = 0}^\infty \frac{u^n}{n!}
\end{align}
induces an identification \(\widehat{\mathcal{O}}_{E,p} = \CC[\![u]\!]\) in such a way that \(u(1) = \textnormal{log}(1) = 0\) and \(\widehat{\omega}_p = e^{-u}d(e^u) = du\). Thus \(u\) is the formal local parameter used in the setting of Theorem \ref{thm: geomrmat in p} and onwards. Now the \(\CC[z^{m},z^{-m}]\)-\(\CC[\![u]\!]\)-equivariant Lie algebra morphism \( \Loop^\sigma \longrightarrow \g[\![u]\!]\) given by \(f(z) \longmapsto f(e^{u/m})\) 
induces a formal trivialization \(\xi\colon \widehat{\mathcal{A}}_p \longrightarrow \g[\![u]\!]\). With this choice,
we can interpret the series expansion of \(\rho_t\), described in Remark \ref{rem: taylor series in p}, as the Taylor series of \(r_t^\sigma(e^{u/m},e^{v/m})\) at \(v = 0\). 
Therefore, we can apply Theorem \ref{thm: lifting equivalences} to the Taylor expansion of \eqref{eq: holomorphic equivalence t,Q} at \(v = 0\) to obtain an isomorphism \(\psi \colon \mathcal{A}_t \longrightarrow \mathcal{A}_Q,\) satisfying \((\psi \boxtimes \psi)\rho_t = \rho_Q\), where \(\mathcal{A}_Q\ \coloneqq \mathcal{A}_{t_Q} \) and \(\rho_{Q} \coloneqq \rho_{t_Q}\).

We obtain a regular equivalence
\(\psi \colon \Loop^\sigma = \Gamma(\Breve{E},\mathcal{A}_t) \longrightarrow\Gamma(\Breve{E},\mathcal{A}_Q) = \Loop^\sigma\) by applying \(\Gamma(\Breve{E},-)\).
Note that by Remark \ref{rem: psi value at p} and Lemma \ref{lem: holomorphic equivalence t,Q} the function \(\psi\) actually takes values in 
\(
\textnormal{Inn}_{\CC - \textnormal{LieAlg}}(\g)\). Using the commutative diagram 
\begin{equation}
    \xymatrix{\Gamma(\Breve{E}\times \Breve{E},\Breve{\mathcal{A}} \boxtimes \Breve{\mathcal{A}}) \ar[d]_\cong \ar[r]^{\psi \boxtimes \psi}& \Gamma(\Breve{E}\times \Breve{E},\Breve{\mathcal{A}} \boxtimes \Breve{\mathcal{A}}) \ar[d]^\cong \\ \Loop^\sigma \ot \Loop^\sigma \ar[r]_{\psi \ot \psi} & \Loop^\sigma \ot \Loop^\sigma},
\end{equation}
where the vertical isomorphisms are given by the Künneth theorem, we obtain the desired identity
\begin{align}
    (\phi(x)\ot \phi(y))r^\sigma_t(x,y) = r^\sigma_Q(x,y).
\end{align}

We finish the proof of the main theorem by explaining when two BD quadruples give rise to equivalent twisted standard structures.

\begin{lemma}
  Let \(Q = (\Gamma_1,\Gamma_2,\gamma,t_\h)\) and \(Q'=(\Gamma'_1,\Gamma'_2,\gamma',t'_\h)\) be two BD quadruples. Then \(\delta_Q^\sigma\) and \(\delta^\sigma_{Q'}\) are regularly equivalent if and only if there exists
  an automorphism \(\vartheta\) of the Dynkin diagram of \(\Loop^\sigma\) such that \(\vartheta(Q) = Q'\).
\end{lemma}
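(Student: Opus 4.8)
The plan is to exploit the principle, already established while proving the first part of Theorem~\ref{thm: classification}, that for \(\sigma\)-trigonometric \rmatrs the notions of regular and holomorphic equivalence coincide, and thereby reduce the statement to the Belavin--Drinfeld classification of trigonometric solutions. By Theorem~\ref{thm: Gauge equivalence} a regular equivalence of \(\delta_Q^\sigma\) and \(\delta_{Q'}^\sigma\) is the same as a regular equivalence of the \rmatrs \(r_Q^\sigma\) and \(r_{Q'}^\sigma\), so I would organise the argument around the chain
\[
\bigl(\delta_Q^\sigma \ \text{reg.\ equiv.}\ \delta_{Q'}^\sigma\bigr)
\Longleftrightarrow
\bigl(r_Q^\sigma \ \text{hol.\ equiv.}\ r_{Q'}^\sigma\bigr)
\Longleftrightarrow
\bigl(X_Q \ \text{hol.\ equiv.}\ X_{Q'}\bigr)
\Longleftrightarrow
\bigl(\exists\,\vartheta\colon \vartheta(Q)=Q'\bigr),
\]
where \(X_Q\) denotes the trigonometric normal form attached to \(Q\) in \cite{belavin_drinfeld_solutions_of_CYBE_paper} and \(\vartheta\) ranges over automorphisms of the Dynkin diagram of \(\Loop^\sigma\).

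For the middle two equivalences I would argue as follows. A regular equivalence \(\phi\in\textnormal{Aut}_{O^\sigma-\textnormal{LieAlg}}(\Loop^\sigma)\) realising \(r_Q^\sigma(x,y)=(\phi(x)\ot\phi(y))r_{Q'}^\sigma(x,y)\) becomes, after the substitution \(x=e^{u/m}\), \(y=e^{v/m}\), the holomorphic equivalence \(u\mapsto\phi(e^{u/m})\); this is the only content of the implication regular \(\Rightarrow\) holomorphic, while the reverse implication is the substantial one discussed below. Applying Theorem~\ref{thm: r(x,y)=X(u-v)} together with the regrading of Lemma~\ref{thm: GxG r = r} brings \(r_Q^\sigma\) and \(r_{Q'}^\sigma\) to the Belavin--Drinfeld normal forms \(X_Q\) and \(X_{Q'}\) by further holomorphic gauges — this is exactly the passage \(r_Q^\sigma\mapsto X_Q\) recorded among the consequences of Theorem~\ref{thm: classification} — so holomorphic equivalence of \(r_Q^\sigma,r_{Q'}^\sigma\) is the same as holomorphic equivalence of \(X_Q,X_{Q'}\). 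The final equivalence, that \(X_Q\) and \(X_{Q'}\) are holomorphically equivalent precisely when a Dynkin diagram automorphism \(\vartheta\) satisfies \(\vartheta(Q)=Q'\), is the content of the Belavin--Drinfeld classification \cite{belavin_drinfeld_solutions_of_CYBE_paper}, both of whose implications I may cite.

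The genuinely nontrivial step is the upgrade holomorphic \(\Rightarrow\) regular, which is what the \emph{if} direction requires: given \(\vartheta(Q)=Q'\), the chain produces a holomorphic equivalence of \(r_Q^\sigma\) and \(r_{Q'}^\sigma\), and I would promote it to a regular one exactly as in the proof of the first part of Theorem~\ref{thm: classification}. By \cite{TBA} both \rmatrs arise as geometric \rmatrs \(\rho_Q,\rho_{Q'}\) from coherent sheaves of Lie algebras \(\mathcal{A}_Q,\mathcal{A}_{Q'}\) on the nodal Weierstraß cubic \(E\) with section \(\Loop^\sigma\) over \(\Breve{E}\). Taylor expanding the holomorphic equivalence at the smooth point \(p\), as in the derivation following Lemma~\ref{lem: holomorphic equivalence t,Q}, yields a formal \(\CC[\![u]\!]\)-linear equivalence of \(\rho_Q\) and \(\rho_{Q'}\); Theorem~\ref{thm: lifting equivalences} extends it to an isomorphism \(\mathcal{A}_Q\cong\mathcal{A}_{Q'}\) of sheaves of Lie algebras, and applying \(\Gamma(\Breve{E},-)\) produces the required regular equivalence, i.e. a \(\phi\) with \((\phi\times\phi)W_Q=W_{Q'}\).

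I expect the main obstacle to be this last upgrade, since holomorphic equivalences are a priori weaker than regular ones (they need not be automorphisms of \(\Loop^\sigma\)) and the only mechanism closing the gap is the geometric extension property. Concretely, the crux will be verifying that the holomorphic gauge obtained from \(\vartheta(Q)=Q'\) genuinely Taylor-expands at \(p\) to a formal local equivalence of the type required by Theorem~\ref{thm: lifting equivalences} — in particular that it is \(\CC[\![u]\!]\)-linear and compatible with the chosen formal trivialisations — so that the extension theorem applies and the resulting sheaf isomorphism descends to a regular equivalence over \(\Breve{E}\). The \emph{only if} direction, by contrast, is formal once the chain is in place: a regular equivalence is in particular holomorphic, and \(\vartheta\) is then read off directly from the Belavin--Drinfeld classification.
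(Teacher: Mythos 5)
Your route is genuinely different from the paper's, and it hinges on a citation that the paper deliberately avoids. You reduce both implications to the statement that two Belavin--Drinfeld normal forms \(X_Q\), \(X_{Q'}\) are holomorphically equivalent if and only if \(\vartheta(Q)=Q'\) for some automorphism \(\vartheta\) of the affine Dynkin diagram, and you treat this as quotable from \cite{belavin_drinfeld_solutions_of_CYBE_paper}. The paper does not do this: it uses \cite{belavin_drinfeld_solutions_of_CYBE_paper} only for the \emph{existence} of a normal form (in the proof of the first part of Theorem~\ref{thm: classification}) and proves the uniqueness statement of the present lemma by a self-contained algebraic argument. Concretely, for the \emph{only if} direction the paper passes from the regular equivalence to the operator identity \(\phi R_Q\phi^{-1}=R_{Q'}\) via \eqref{eq: def eq for r_t}, observes using the nilpotency of \(\theta^{+}_{\gamma}\) in \eqref{eq: R_Q} that the normalizers of the generalized \(0\)-eigenspaces of \(R_Q\) and \(R_{Q'}\) are both \(\B_+\), concludes that \(\phi\) fixes \(\B_+\), and then invokes part 3 of Lemma~\ref{lem: structure parabolic} to extract \(\vartheta\); the identity \(\vartheta(Q)=Q'\) is then read off from \eqref{eq: W_Q}. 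Your proposal contains no substitute for this mechanism beyond the citation, so if the two-sided uniqueness for trigonometric solutions is not actually established in \cite{belavin_drinfeld_solutions_of_CYBE_paper} in the precise form you need — in particular covering affine diagram automorphisms that move the affine node \(\widetilde{\alpha}_0\) (which are not induced by constant automorphisms of \(\g\)) and prescribing the action on the continuous parameter \(t_\h\) — your argument has a gap exactly at the combinatorial core of the lemma. The structure of the paper strongly suggests the authors did not consider this statement safely quotable.

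For the \emph{if} direction your chain (diagram automorphism \(\Rightarrow\) holomorphic equivalence of normal forms \(\Rightarrow\) formal equivalence at \(p\) \(\Rightarrow\) sheaf isomorphism via Theorem~\ref{thm: lifting equivalences} \(\Rightarrow\) regular equivalence) would work \emph{granting} the first arrow, and your discussion of the Taylor expansion and \(\CC[\![u]\!]\)-linearity is consistent with how the paper handles the analogous step for the first part of Theorem~\ref{thm: classification}. But it is a long detour: the paper simply writes down the regular equivalence directly from \(\vartheta\), namely \(\phi'(z^{\pm s_i}X^{\pm}_i(1))=z^{\pm s_{\vartheta(i)}}X^{\pm}_{\vartheta(i)}(1)\), corrected by a rescaling \(\mu_a\) to restore \(\CC[z^m,z^{-m}]\)-linearity, with no appeal to the geometric machinery or to \cite{belavin_drinfeld_solutions_of_CYBE_paper} at all. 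I would recommend adopting the direct construction for the \emph{if} direction in any case, and either supplying the eigenspace/normalizer argument for the \emph{only if} direction or verifying very carefully that the uniqueness claim you attribute to Belavin--Drinfeld is actually proved there.
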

\begin{proof}
 To simplify the notations we assume
\(\sigma = \sigma_{(s,|\nu|)}\) for
\(s = (s_0,\dots,s_n)\). The general result follows from Remark \ref{rem: L^nu is determined by ord(nu)}.

\emph{''\(\implies\)''\,:} Let \(\phi\) be a regular equivalence between \(\delta^\sigma_Q\) and \(\delta^\sigma_{Q'}\). The identity \begin{equation}
(\phi(x) \ot \phi(y))r_Q^\sigma(x,y) = r_{Q'}^\sigma(x,y),
\end{equation}
given by Theorem \ref{thm: Gauge equivalence},
and the equation \eqref{eq: def eq for r_t} imply \(\phi R_Q \phi^{-1} = R_{Q'}\).
If \( GE_0 \) and \( GE'_0 \) are the generalized eigenspaces of \( R_Q \) and \( R_{Q'} \) respectively, corresponding to the common eigenvalue 0, then \( \phi(GE_0) = GE'_0 \).
Since both \(\theta^+_\gamma \) and \(\theta^+_{\gamma'} \) are nilpotent, the identity \eqref{eq: R_Q} implies the equality of normalizers \( \textnormal{N}_{\Loop^\sigma}(GE_0) =\textnormal{N}_{\Loop^\sigma}(GE'_0) = \B_+ \). Therefore, \( \phi \) is an automorphism of \( \Loop^\sigma \) fixing the Borel subalgebra \( \B_+ \).
From the third part of Lemma \ref{lem: structure parabolic}, we see that \( \phi \) induces an automorphism \( \vartheta \)  of the Dynkin diagram of \( \Loop^\sigma \).
The identity \( \vartheta(Q) = Q' \) follows from Theorem \ref{thm: Gauge equivalence} and formula \eqref{eq: W_Q}.
  
\emph{''\(\impliedby\)''\,:} Let \(\vartheta\) be a Dynkin diagram automorphism, such that
\(\vartheta(Q) = Q'\).
We want to show that \(\vartheta\) defines a regular equivalence \(\phi\) on \(\Loop^\sigma\), such that \(\phi(\Loop^\sigma_{(\alpha,k)}) = \Loop^\sigma_{\vartheta(\alpha,k)}\) for any root \((\alpha,k)\) of \(\Loop^\sigma\).
Let \( \phi' \) be the automorphism defined by \(\phi'(z^{\pm s_i}X^\pm_i(1)) \coloneqq z^{\pm s_{\vartheta(i)}}X^\pm_{\vartheta(i)}(1)\).
Then it maps the root spaces onto each other in the desired way.
We now adjust \(\phi'\) to be \(\CC[z^m,z^{-m}]\)-linear. By \cite[Lemma 8.6]{kac_book} we have the equality \(\phi' ((z^{m}-1)\Loop^\sigma) = ((z/a)^{m} - 1) \Loop^\sigma\) for some \(a \in \CC^*\). 
Let \(\mu_a\)  be the automorphism of \(\Loop^\sigma\) given by \(\mu_a(f(z)) \coloneqq f(az)\). Note that it preserves the root spaces of \(\Loop^\sigma\). Define \( \phi \coloneqq \mu_a \phi' \), then
\(\phi((z^{m}-1)\Loop^\sigma) = (z^{m} - 1) \Loop^\sigma\) and thus
\begin{equation}
      \phi(z^{\pm s_i +m}X_i^\pm(1)) = z^m\phi(z^{\pm s_i}X_{i}^\pm(1))
\end{equation}
implying the \(\CC[z^m,z^{-m}]\)-linearity.
\end{proof}

\newpage
\printbibliography

\end{document}